\newtheorem{theorem}{Theorem}[section]
\newtheorem{proposition}[theorem]{Proposition}
\newtheorem{definition}[theorem]{Definition}
\newtheorem{lemma}[theorem]{Lemma}
\newtheorem{corollary}[theorem]{Corollary}
\newtheorem{assumption}[theorem]{Assumption}
\theoremstyle{remark}
\newtheorem{example}{Example}
\newcommand{\fraku}{\mathfrak{U}}
\newcommand{\spam}{\mathop{\mathrm{span}}}
\newcommand{\ints}{\mathbb{Z}}
\newcommand{\nats}{\mathbb{N}}
\newcommand{\reals}{\mathbb{R}}
\newcommand{\RR}{\mathbb{R}}
\newcommand{\comps}{\mathbb{C}}
\newcommand{\M}{\mathbb{M}}
\newcommand{\calo}{\mathcal{O}}
\newcommand{\sphere}{\mathbb{S}^d}
\newcommand{\sph}{\mathbb{S}} 
\newcommand{\Exp}{\operatorname{Exp}}
\newcommand{\man}{\M}
\newcommand{\dif}{\mathrm{d}}
\renewcommand{\b}{\mathbf{b}}
\renewcommand{\a}{\mathbf{a}}
\newcommand{\bft}{\mathbf{t}}
\newcommand{\bfu}{\mathbf{u}}
\newcommand{\bfv}{\mathbf{v}}
\newcommand{\bfw}{\mathbf{w}}
\newcommand{\inj}{\mathrm{r}_\M}
\renewcommand{\d}{\mathrm{dist}}
\renewcommand{\aa}{\mathfrak{a}}
\newcommand{\bb}{\mathfrak{b}}
\newcommand{\D}{\{1...d\}}
\newcommand{\sfp}{\mathsf{p}}
\newcommand{\sfq}{\mathsf{q}}
\newcommand{\stardom}{\mathcal D}  
\newcommand{\inrad}{r}  
\newcommand{\J}{\mathcal{J}}
\newcommand{\I}{\mathcal{I}}
\renewcommand{\L}{\mathcal{L}}
\newcommand{\ns}[1]{\left|\!\left|\!\left| {#1}\right|\!\right|\!\right|}
\newcommand{\bfphi}{{\boldsymbol \phi}}
\def\bfe{{\bf e}}
\def\hati{{\hat \imath}}
\def\hatj{{\hat \jmath}}
\def\bft{{\bf t}}
\def\bfu{{\bf u}}
\def\bfk{{\bf k}}
\numberwithin{equation}{section}
\title{Polyharmonic and Related Kernels on Manifolds: Interpolation
  and Approximation \thanks{ \emph{2000 Mathematics Subject
      Classification:} 41A05, 41A63, 46E22, 46E35} \thanks{\emph{Key
      words:} manifold, positive definite kernels, least squares
    approximation, Sobolev spaces}}
\author{T.~Hangelbroek\thanks{ Department of Mathematics, Vanderbilt
    University, Nashville, TN 37240, USA. Research supported
    by  by grant DMS-1047694 from the National
    Science Foundation.}, 
F. J. Narcowich\thanks{ Department of Mathematics, Texas A\&M
    University, College Station, TX 77843, USA. Research
    supported by grant DMS-0807033 from the National
    Science Foundation.}, 
J. D. Ward\thanks{ Department of Mathematics, Texas A\&M University,
    College Station, TX 77843, USA. Research supported by
    grant DMS-0807033 from the National Science
    Foundation.}}
\begin{document}
\maketitle

\begin{abstract}
  This article is devoted to developing a theory for effective kernel
  interpolation and approximation in a general setting. For a wide
  class of compact, connected $C^\infty$ Riemannian manifolds,
  including the important cases of spheres and $SO(3)$, we establish,
  using techniques involving differential geometry and Lie groups,
  that the kernels obtained as fundamental solutions of certain
  partial differential operators generate Lagrange functions that are
  uniformly bounded and decay away from their center at an algebraic
  rate, and in certain cases, an exponential rate.  An immediate
  corollary is that the corresponding Lebesgue constants for
  interpolation as well as for $L_2$ minimization are uniformly
  bounded with a constant whose only dependence on the set of data
  sites is reflected in the \emph{mesh ratio}, which measures the
  uniformity of the data. The kernels considered here include the
  restricted surface splines on spheres, as well as surface splines
  for $SO(3)$, both of which have elementary closed-form
  representations that are computationally implementable.
  In addition to obtaining bounded Lebesgue constants in this setting,
  we also establish a ``zeros lemma'' for domains on compact
  Riemannian manifolds -- one that holds in as much generality as the
  corresponding Euclidean zeros lemma (on Lipschitz domains satisfying interior
  cone conditions) with constants that clearly demonstrate the influence
  of the geometry of the boundary (via cone parameters) as well as that of the 
  Riemannian metric.
\end{abstract}


\section{Introduction}\label{sec0}

Radial basis functions (RBFs) have proven to be a powerful tool for
analyzing scattered data on $R^n$. More recently, spherical basis
functions (SBFs), which are analogs of RBFs on the $n$-sphere, and
periodic basis functions (PBFs), which are analogs of RBFs on the
$n$-torus, have had comparable success for analyzing scattered data on
these manifolds.  A theoretical drawback is that most RBFs are
globally defined, thin plates splines are an example, and even those
that are locally defined such as Wendland functions behave globally
when approximating at densely packed data sites. Nevertheless, certain
RBF approximants, in their numerical implementation, exhibit localized
behavior, i.e., changing data locally only significantly alters the
interpolant locally. Since the pioneering work of Duchon \cite{D1,D2}, it
has been a mystery why RBF/SBF approximants display local behavior
even though the bases are globally supported. It was long suspected
that there were ``local bases'' hidden within the space of translates
of RBFs/SBFs.

A major objective of this paper is to establish that, for spheres and
$SO(3)$, there are closed-form kernels whose associated approximation
spaces possess highly localized bases, in the form of Lagrange
functions for given scattered data. Our previous work \cite{HNW}
established the existence of such bases on compact manifolds, but the
kernels we constructed did not have closed form. 

We will carry out the construction and proofs that establish the
existence and properties of these closed-form kernels in the context
of of more general manifolds, thus achieving another objective:
obtaining results for a broader class of kernels on manifolds than the
ones we treated in \cite{HNW}, where only reproducing kernels for
Sobolev spaces were dealt with.

We also address similar issues for conditionally positive kernels on
manifolds, where a given space of functions is to be reproduced. In
the case of $\RR^d$, this involves little more than polynomial
reproduction. For manifolds -- even for spheres and $SO(3)$ -- the
kernels and spaces are not so simple, and new techniques are required
to deal with the problem.


\paragraph{Goals} Given a manifold $\M$, a finite set of points $X=
\{x_1,\dots,x_N\} \subset \M$ and a kernel $k\colon\ \M \times \M \to
\reals$, one may attempt to fit a smooth function using functions from
$V_X := \spam\{k(\cdot,x_j),\ x_j\in X\}$, or more generally, to use
functions of the form
\begin{equation}\label{basic_form}
s = \sum_{j=1}^N a_{j} k(\cdot,x_j)  +p,
\end{equation}
where the supplementary function $p$ comes from a simple space (like
polynomials or spherical harmonics). The framework described above
applies to fitting data by means of interpolation, least squares or
near interpolation with a smoothing term.

This article is devoted to developing a theory for effective kernel
approximation in a general setting.  The problem is described as
follows: we seek kernels $k\colon\ \M\times \M \to \reals$ for which
interpolation is well posed and that have a convenient closed form
representation allowing for effective computations. Furthermore, we
are interested in aspects of the interpolants/approximants concerning
stability, locality and so on. 

In \cite{HNW} and \cite{HNSW}, we developed a theory for compact
Riemannian manifolds using positive definite ``Sobolev kernels.'' The
theory developed there addresses and answers questions concerning
properties of bases for $V_X$, properties related to locality,
stability of approximation and interpolation, and other matters. In
this paper, these questions, which are listed below, are addressed and
answered for a broad class of kernels on $\M$ that are Green's
functions for certain elliptic operators, and, when the manifold is a
sphere, real projective space or $SO(3)$, are computationally
implementable as well.

{\bf Locality}. Are there local bases for $V_X$? That is, are there
bases similar to those for wavelet systems or B-splines \cite[Chapter
5]{DL}? At a minimum, we would like a basis $\{v_j\}$ to satisfy
$|v_j(x)| \le r(d(x,x_j))$, with $r$ a rapidly decaying function.

{\bf $L_p$ conditioning}. Are there bases that are well conditioned in
$L_p$, after renormalization? That is, can we find bases for which
there constants $c_1,c_2$ such that $c_1 \|a\|_{\ell_p}\le
\|\sum_{j=1}^N a_j v_j\|_{L_p} \le c_2 \|a\|_{\ell_p}$, with $c_1,c_2$
independent of $N$, and, after a suitable normalization, independent of
$p$?

{\bf Marcinkiewicz-Zygmund property}. Does the 
space $V_X$
possess a Marcinkiewicz-Zygmund property relating samples to the size
of the function?  For $s\in V_X$, this means that the norms
$\|j\mapsto s(x_j)\|_{\ell_p}$ and $\|s\|_{L_p}$ are equivalent, with
constants involved independent of $N$.

{\bf Stability of interpolation}. Is interpolation  stable? Is the
Lebesgue constant bounded or, more generally, is the $p$ norm of the
interpolant controlled by the $\ell_p$ norm of the data?

{\bf Stability of approximation in $L_p$}. Is approximation by $L_2$
projection stable in $L_p$? Here, $1\le p\le \infty$.  In particular,
what we want is that the orthogonal projector with range $V_X$ be
continuously extended to each $L_p$, and that it has bounded operator
norm independent of $N$.

The Sobolev kernels we dealt with in \cite{HNW} do not possess simple,
closed form representations, even when the underlying manifold is a
sphere; they are defined indirectly, as reproducing kernels for
certain Sobolev spaces. To applied effectively to data fitting
problems, such a kernel should have an implementable characterization,
by which interpolation, approximation or other computational problems
can be treated. In the important cases relating to spheres and
$SO(3)$, we exhibit computationally implementable kernels. In
particular, these kernels include restricted surface splines on
spheres, and surface splines on $SO(3)$, both of which have simple
closed form representations.  Furthermore, for both of these cases,
theoretical approximation results concerning direct theorems, inverse
theorems, and Bernstein inequalities are known to hold \cite{MNPW}. In
conjunction with the stability of interpolation and least-squares
approximation in $L_p$, both yield new, precise error estimates for
these implementable schemes.

\paragraph{Kernels} The class of kernels considered in this paper are
those kernels $\kappa$ that act as fundamental solutions for elliptic
differential operators of the form $\L_m = \prod_{j} (\Delta - r_j)$
and lower order perturbations of these.  The origin of this approach
lies in the work of Duchon, \cite{D1,D2} on surface splines, where the
underlying kernel is the Green's functions for iterated Laplacian,
$\Delta^m$, on $\reals^d$.  Such kernels have also been used on
Riemannian manifolds, \cite{DNW, Pesenson00}.  For this reason, we
call them \emph{polyharmonic}; see
Definition~\ref{polyharmonic_kernels}.  Throughout this article, we
use $k_m$ to denote a generic polyharmonic kernel. 

This is a classic family of kernels and is sufficiently robust to
include many interesting examples. For instance, such kernels have
also been in use for some time on spheres, and have formed one of the
earliest families of SBFs (see \cite{Freeden} and references for a
list of early examples). In this setting, certain careful choices of
these kernels result in the complete family of surface splines
restricted to the sphere\footnote{A related problem, which can be
  considered a generalization of this particular set up has recently
  been considered by Fuselier and Wright, \cite{FusWrt}. There, kernel
  interpolation is considered on manifolds that are embedded in
  $\reals^d$ by using the restriction of various other RBFs to the
  manifold -- this is accomplished by constructing interpolants in the
  ambient Euclidean space and then restricting these to the
  manifold. (In contrast, we work directly with the manifold, making
  use of its intrinsic structure.)}, introduced in \cite{NSW}, which
we define below in (\ref{def_rss}) and denote in the ``zonal'' form as
$k_m(x,t) = \phi_s(x\cdot t) $.  Here $s$ is related to $m$ via $m = s
+d/2$.

It also includes the surface splines on $SO(3)$, introduced in
\cite{HS} and defined below in (\ref{def_so3}), and denoted throughout
the paper by $\bfk_m$.

A second type of kernel, ostensibly different from the polyharmonic
kernels, are the Sobolev (or Mat{\'e}rn) kernels, which have been
introduced for compact Riemannian manifolds in \cite{HNW}.  These
kernels come about as reproducing kernels for Sobolev spaces.  We
denote such kernels by $\kappa_m(x,y)$, where $m$ indicates the order
of the Sobolev space.  A corollary of the results presented in this
paper is that, in many cases, the Sobolev kernels are in fact
polyharmonic kernels. There is an operator $\L_m$ for which
$\kappa_m(x,y)$ is the fundamental solution.

\begin{table}[ht]
\begin{center}
\begin{tabular}{|l|c|c|}
\hline
{\bf Kernel	
}		
& {\bf Notation	}
& {\bf Location in manuscript	}
\\
 \hline
Restricted surface spline	
& $(x,y)\mapsto \phi_s(x\cdot y)$	
&Example 2 in \ref{ss_examples} 	\\ 
\hline
Surface spline on SO(3)	& 
$\bfk_m$		&
Example 3 in \ref{ss_examples} \\ \hline
Polyharmonic kernel			& 
$k_m$			&
Definition \ref{polyharmonic_kernels}	\\ \hline
Sobolev kernel	&
$\kappa_m$&
\cite[3.3]{HNW}	\\ \hline
\end{tabular}
\caption{Index for kernels used}
\end{center}
\end{table}
\paragraph{Outline} The layout of this paper is as follows. Following
the introduction and background, Section~\ref{background} deals with
certain geometric notions relevant to this article.
Section~\ref{interpolation_kernels} treats interpolation by {\em
  conditionally positive definite} kernels, the function spaces that
they generate, and the nature of their interpolants. We discuss some
important examples on well known manifolds, including spheres and the
rotation group. Finally we define precisely the polyharmonic kernels,
which are the kernels that we treat in our main results; they include
the examples we provided earlier. We demonstrate that they are
conditionally positive definite, identify the seminorm of the native
spaces associated with these kernels, and discuss the variational
problem associated with their interpolants.

The relationship between the polyharmonic kernels and the Sobolev
kernels of \cite{HNW} will be covered in
Section~\ref{quadratic_forms}. We show that, under certain conditions,
the polyharmonic operators $\L_m$ can be expressed as combinations of
operators generated by covariant derivatives, and vice versa (this is
done in Lemma~\ref{Lap_Cov}).  This allows us to conclude that Sobolev
kernels are examples of polyharmonic kernels.  On the other hand, it
permits us to demonstrate, in Section~\ref{ss_LBzeros}, that the
native space seminorms associated with polyharmonic kernels exhibit
the same behavior (metric equivalence to Euclidean Sobolev seminorms,
zeros lemmas, etc.)  as the native space norms associated with Sobolev
kernels.

The main results of the paper are given in
Section~\ref{lagrange_function}. Namely, the Lagrange function
associated with a kernel $k_m$ is rapidly decaying, and gives rise to
a uniformly bounded Lebesgue constant and a uniformly bounded $L_2$
minimization projector. The properties mentioned above -- concerning
locality, stability, conditioning, and so on -- then follow
immediately. We then discuss implications of this for surface spline
kernels on spheres and $SO(3)$.


Essential to our proofs in Section~\ref{lagrange_function} are theorems
giving $L_p$ Sobolev-space estimates for functions having zeros
quasi-uniformly distributed on a domain $\Omega$, with $\partial
\Omega$ being Lipschitz. Such theorems may hold both in $\reals^d$ and 
on $\M$ itself, and in
Section A, we treat both cases. For the case of a manifold $\M$,
these theorems involve geometric ideas; in particular, they require use of a
\emph{minimal $\varepsilon$-set} of points in $\M$
(cf. \cite{grove-petersen-1988-1}), which replaces a simpler set in
$\reals^d$. The results for $\M$ turn out to be intrinsic and hold in
the same generality as those in the Euclidean case. The bounds and the
condition on the meshnorm reflect geometric properties --
particularly, parameters from the cone condition on $\partial \Omega$, 
properties of the manifold $\M$, 
and parameters of the Sobolev spaces themselves -- but are independent of the volume and
diameter of $\Omega$.

\section{Background}
\label{background}
We now discuss some relevant details about analysis on compact,
complete, connected $C^\infty$ Riemannian manifolds. This is the same
setting as \cite{HNW}. Refer to it for a more detailed treatment and
further references.

Throughout our discussion, we will assume that $(\M,g)$ is a
$d$-dimensional, connected, complete $C^\infty$ Riemannian manifold
without boundary; the Riemannian metric for $\M$ is $g$, which defines
an inner product $g_p(\cdot,\cdot )=\langle\cdot,\cdot \rangle_{g,p}$
on each tangent space $T_pM$; the corresponding norm is
$|\cdot|_{g,p}$. As usual, a chart is a pair $(\fraku, \phi)$ such
that $\fraku\subset \M$ is open and the map $\phi:\fraku \to \reals^d$
is a one-to-one homeomorphism. An atlas is a collection of charts
$\{(\fraku_\alpha,\phi_\alpha)\}$ indexed by $\alpha$ such that
$\M=\cup_\alpha \fraku_\alpha$ and, when
$\phi_\alpha(\fraku_\alpha)\cap\phi_\beta(\fraku_\beta)\ne \emptyset$,
$\phi_\beta \circ\phi_\alpha^{-1}$ is $C^\infty$. In a fixed chart
$(\fraku, \phi)$, the points $p\in \fraku$ are parametrized by
$p=\phi^{-1}(x)$, where $x=(x^1,\ldots,x^d)\in U=\phi(\fraku)$.

In these local coordinates, $TM_p$ and $T^\ast M_p$, the tangent and
cotangent spaces at $p$, have bases comprising the vectors $\bfe_j =
\left(\frac{ \partial}{\partial x^j}\right)_p$, $j=1\ldots d$ and
$\bfe^k= (dx^k)_p$, $k=1\ldots d$, respectively. These vary smoothly
over $U=\phi(\fraku)$ and form dual bases in the sense that
$\bfe^k(\bfe_j) = \frac{ \partial x^k}{\partial x^j} = \delta^k_j$. In
the usual way, the inner product $\langle\cdot,\cdot \rangle_{g,p}$
provides an isomorphism between the cotangent and tangent
spaces. Thus, regarding $\bfe^k$'s as vectors, we have that $\langle
\bfe^k,\bfe_j \rangle_{g,p} = \delta^k_j$. A vector $\bfv$ can be
represented either as $\bfv=\sum_j v^j\bfe_j$ or as $\bfv = \sum_k
v_k\bfe^k$; the $v^j$'s and $v_k$'s are the \emph{contravariant} and
\emph{covariant} components of $\bfv$, respectively. Relative to these
bases, the inner product $\langle\cdot,\cdot \rangle_{g,p}$ has the
form
\begin{equation}
\label{metric_co_contra_form}
\langle\bfu,\bfv \rangle_{g,p} = \sum_{i,j=1}^d g_{ij}u^i v^j =
\sum_{i,j=1}^d g^{ij}u_i v_j\,, \ \text{where }g_{ij}= \langle \bfe_i,
\bfe_j \rangle_{g,p}\ \text{and } g^{ij}= \langle \bfe^i, \bfe^j
\rangle_{g,p}
\end{equation}
The matrices $(g_{ij})$ and $(g^{ij})$ are inverse to each other, and
are of course symmetric and positive definite. The inner product
$\langle\bfv,\bfw \rangle_{g,p}$ is itself independent of
coordinates. In addition, if $\bfv$ and $\bfw$ are $C^\infty$ vectors
fields in $p$, then $\langle\bfv,\bfw \rangle_{g,p}$ is also
$C^\infty$. 

The metric $g$ also induces an invariant volume measure $d\mu$ on
$\M$. The local form of the measure is $d\mu(x) =
\sqrt{\det(g)}dx^1\cdots dx^d$, where $\det(g) =
\det(g_{ij})$. 

\emph{Geodesics} are curves $\gamma : \RR \to \M$ that locally
minimize the arc length functional, $\int_a^b |\dot
\gamma|_{g,p}dt$. If we use the arc length $s$ as the parameter (i.e.,
$t\to s$), then, in local coordinates, a geodesic satisfies the
Euler-Lagrange equations:
\begin{equation}
  \label{geodesic_christoffel}
\frac{d^2 x^k}{ds^2} +  
\sum_{i,j=1}^d\Gamma^k_{ij} \frac{dx^i}{ds}\frac{dx^j}{ds} = 0, \quad
\text{where }\    
\Gamma^k_{ij} := \frac12
\sum_{m\in \D}g^{km} \left( \frac{\partial g_{jm}}{\partial x^i} + 
  \frac{\partial g_{im}}{\partial x^j} - \frac{\partial
    g_{ij}}{\partial x^m}
\right).
\end{equation}
The $\Gamma^k_{ij}$ are the \emph{Christoffel symbols}.

A geodesic solving (\ref{geodesic_christoffel}) is specified by giving
an initial point $p\in \M$, whose coordinates we may take to be
$\bigl(x^1 (0),\dots,x^d (0)\bigr)=0$, together with a tangent vector
$\bft_p$ having components $\frac{dx^i}{ds}(0)$. A Riemannian manifold
is said to be \emph{complete} if the geodesics are defined for all
values of the parameter $s$. All compact Riemannian manifolds without boundary are
complete, and so are many non-compact ones, including $\reals^d$.

We define the \emph{exponential map} $\Exp_p: T_pM \to \M$ by letting
$\Exp_p(0)=p$ and $\Exp_p(s\bft_p)=\gamma_p(s)$, where $\gamma_p(s)$
is the unique geodesic that passes through $p$ for $s=0$ and has a
tangent vector $\dot \gamma_p(0)=\bft_p$ of length 1; i.e., $\langle
\bft_p,\bft_p\rangle_{gp}.=1$.

Although geodesics having different initial, non-parallel unit tangent
vectors $\bft_p=\dot \gamma_p(0)$ may eventually intersect, there will
always be a neighborhood $\fraku_p$ of $p$ where they do not.  In
$\fraku_p$, the initial direction $\bft_p$, together with the arc
length $s$, uniquely specify a point $q $ via $q=\gamma_p(s)$, and the
exponential map $\Exp_p$ is a diffeomorphism between the corresponding
neighborhoods of $0$ in $T_pM$ and $p$ in $M$. In particular, there
will be a largest ball $B(0,\mathrm{r}_p)\in T_pM$ about the origin in
$T_pM$ such that $\Exp_p: B(0,\mathrm{r}_p)\to
\b(p,\mathrm{r}_p)\subset \M$ is injective and thus a diffeomorphism;
$\mathrm{r}_p$ is called the \emph{injectivity radius for $p$}. By
choosing cartesian coordinates on $B(0,\mathrm{r}_p)$, with origin
$0$, and using the exponential map, we can parametrize $\M$ in a
neighborhood of $p$ via $q=\Exp_p(x)$, $x\in T_pM$.

The \emph{injectivity radius} of $\M$ is $\inj:=\inf_{p\in
  \M}\mathrm{r}_p$. If $0<\inj\le \infty$, the manifold is said to
have \emph{positive} injectivity radius. For any $r< \inj$ and any
$p\in \M$, the exponential map $\Exp_p: B(0,r)\to \b(p,r)$ is a
diffeomorphism.

We make special note of the fact that, for a compact Riemannian
manifold, the family of exponential maps are uniformly isomorphic;
i.e., there are constants $0<\Gamma_1\le 1 \le \Gamma_2<\infty$ so that for
every $p_0\in \M$ and every $x,y\in B(0,\mathrm r)$, where $\mathrm r
\le \inj/3$,
\begin{equation}\label{isometry}
 \Gamma_1|x-y|\le \d(\Exp_{p_0}(x), \Exp_{p_0}(y)) \le \Gamma_2 |x-y|.
\end{equation}
When we use $\Gamma_1$ and $\Gamma_2$ in this paper, we always assume
that there is some fixed radius smaller that $\inj$ on which they are
computed. This avoids a tiresome repetition of this fact throughout
the paper.

An order $k$ \emph{covariant tensor} $\mathbf T$ is a real-valued,
multilinear function of the $k$-fold tensor product of $T_pM$. We
denote by $T_p^kM$ the covariant tensors of of order $k$ at $p$. In
terms of the local coordinates, there is a smoothly varying basis
$\bfe^{i_1}\otimes \dots\otimes \bfe^{i_k}$ for the $k$-fold tensor
product of tangent spaces. Thus, the covariant tensor field $\mathbf
T$ of order $k$ on $\fraku$ can be written as
\[
\mathbf T = \sum_{\hati \in \D^k} T_{\hati}\, \bfe^{i_1}\otimes
\cdots\otimes \bfe^{i_k},
\]
where we adopt the convention $\hati = (i_1,\dots,i_k)$. The
$T_{\hati}$ are the covariant components of $\mathbf T$. The metric
$g_{ij}$ is itself an order 2 covariant tensor field. One can also
define contravariant tensors and tensors of mixed type.

Because $T_p^kM = T_pM\otimes \cdots\otimes T_pM$ (k times), the
metric $g$ induces a natural, useful, invariant inner product on
$T_p^kM$; in terms of covariant components, it is given by
\begin{equation}
\label{metric_inner_prod_tensor}
\langle \mathbf S, \mathbf T\rangle_{g,p} = 
\sum_{\hati,\hatj \in \D^k} g^{i_1 j_1}\cdots g^{i_k j_k}S_\hati \,T_\hatj\,.
\end{equation}

The \emph{covariant derivative}, or \emph{connection}, $\nabla$
associated with $(\M,g)$ is defined as follows. If $\mathbf
T$ is an order $k$ (covariant) tensor, then the covariant derivative
of $\mathbf T$ is
\[
\nabla \mathbf T = \sum_{j\in\D} \sum_{\hati \in \D^k}
\bigg(\underbrace{ \frac{\partial T_{\hati}}{\partial x^j} - \sum
  _{r=1}^k \sum_{s\in \D} \Gamma^s_{j, i_r} T_{i_1,\ldots,i_{r-1},s,
    i_{r+1},\ldots, i_k}}_{(\nabla \mathbf T)_{\hati,j}} \bigg)
\bfe^{i_1}\otimes \cdots \otimes \bfe^{i_k}\otimes \bfe^j,
\]
which is an order $k+1$ covariant tensor with components $(\nabla
\mathbf T)_{\hati,j}$. The $\Gamma^k_{ij}$ are the Christoffel symbols
defined earlier.

A smooth function $f:\M \to \RR$ is a $0$ order tensor and so $\nabla
f$, which is the gradient of $f$ is an order $1$ tensor, $\nabla^2 f$
is an order 2 tensor. Continuing in this way, we may form $\nabla^k
f$, which is an invariant version of the ordinary $k^{th}$ gradient of
a function on $\RR^d$. (The superscript $k$ here is an operator power,
not a contravariant index.) The components of the $k^{th}$ covariant
derivative of $f$ have the form
\begin{equation}
\label{kth_order_covariant_deriv_tensor}
(\nabla^k f(x))_{\hati} 
= 
(\partial^k f(x))_{\hati} 
+ 
\sum_{m=1}^{k-1}
  \sum_{\hatj \in \D^m}
    A_{\hati}^{\hatj}(x) (\partial^m f(x))_{\hatj} 
\end{equation}
where 
\[
(\partial^m f)_{\hatj} := \frac{\partial^m }{\partial
  x^{j_1}\cdots\partial x^{j_m} } f\circ \phi^{-1} ,
\]
and where the coefficients $x\mapsto A_{\hati}^{\hatj}(x)$ depend on
the Christoffel symbols and their derivatives to order $k-1$, and,
hence, are smooth in $x$.  This can also be written in standard
multi-index notation. Let $\alpha_1, \alpha_2,\ldots, \alpha_d$ be the
number of repetitions of $1,2,\ldots, d$ in $\hatj $, and let $\alpha
= (\alpha_1,\ldots,\alpha_d)$. Then,
\begin{equation}
\label{multi_index_superscript_notation}
(\partial^m f)_{\hatj} :=\frac{\partial^m }{\partial (
  x^{1})^{\alpha_1}\cdots\partial  (x^{d})^{\alpha_d} } f\circ
\phi^{-1} =:  
D^{|\alpha |}_\alpha f\circ \phi^{-1} , \  
|\alpha | =\sum_{k=1}^d \alpha_k =m.
\end{equation}

Another important quantity that we need to deal with is the
\emph{adjoint} of the covariant derivative $\nabla^\ast$. This
operator is defined by $\int_\M \langle \nabla^\ast\mathbf T, \mathbf
S\rangle_{g,p}d\mu=\int_\M \langle \mathbf T, \nabla \mathbf
S\rangle_{g,p}d\mu$, where the inner product is given by
(\ref{metric_inner_prod_tensor}), and it takes an order $k+1$ tensor
to an order $k$ tensor. The coordinate form of $\nabla^\ast \mathbf
T$ is obtained via integration by parts:
\begin{equation}
\label{adj_cov_deriv}
(\nabla^\ast \mathbf T)_{\hati} = -\sum_{i,j} g^{jk}(\nabla \mathbf
T)_{\hati,j,k}
\end{equation}

We can combine covariant derivatives and their adjoints to get scalar
operators. In particular, if $f:\M\to \RR$ is $C^\infty$, then
$\nabla^kf$ is an order $k$ tensor. By applying $\nabla^\ast$ to it,
we get $(\nabla^\ast)^k\nabla^k f$, which is a scalar. (Note that $
(\nabla^\ast)^k=(\nabla^k)^\ast$.) The \emph{Laplace--Beltrami
  operator} $\Delta := -\nabla^\ast\nabla$. In coordinates, again
letting $\det(g) = \det(g_{ij})$, we have that
\[
\Delta f = \det(g)^{-1/2}\sum_{i,j}\frac{\partial}{\partial
  x^i}\left(\det(g)^{1/2}g^{ij}\frac{\partial f}{\partial x^j}\right).
\]

\paragraph{Sobolev spaces on subsets of $\M$}
Sobolev spaces on subsets of a Riemannian manifold can be defined in
an invariant way, using covariant derivatives \cite{Aub}. In defining
them, we will need to make use of the spaces $L_\sfp$, $L_\sfq$. To
avoid problems with notation, we will use the sans-serif letters
$\sfp$, $\sfq$, rather than $p$, $q$, as subscripts. Here is the
definition.
\begin{definition}[{\cite[p.~32]{Aub}}]\label{Sob_Norm}
  Let $\Omega\subset \man$ be a measurable subset. For all $1\le \sfp
  \le \infty$, we define the Sobolev space $W_\sfp^m(\Omega)$ to be
  all $f:\M \to \reals$ such that, for $0\le k\le m$, $ | \nabla^k f
  |_{g,p} $ in $L_\sfp(\Omega)$. The associated norms are as follows:
%
%
\begin{equation}\label{def_spn} 
\|f\|_{W_\sfp^m(\Omega)}:= 
\left\{
\begin{array}{cl}
  \left( 
    \sum_{k=0}^m
    \int_{\Omega} 
    | \nabla^k f |_{g,p}^\sfp
    \, \dif \mu(p)\right)^{1/\sfp}, & 1\le \sfp <\infty; \\ [5pt]
  \max_{\,0\le k\le m} \bigl\|  | \nabla^k f |_{g,p}
  \bigr\|_{L_\infty(\Omega)}, &
  \sfp=\infty.
\end{array}
  \right.
\end{equation}
When $\sfp=2$, the norm comes from the Sobolev inner product
\begin{equation}\label{def_sn} 
\langle f, g\rangle_{m,\Omega}:=\langle f,g\rangle_{W_2^m(\Omega)}:= 
  \sum_{k=0}^m
  \int_{\Omega} 
    \left\langle
      \nabla^k f,  \nabla^k g
    \right\rangle_{g,p}
  \, \dif \mu(p).
\end{equation}
We also write the $\sfp=2$ Sobolev norm as $\|f\|_{m,\Omega}^2 :=
\langle f,f\rangle_{m,\Omega}$.  When $\Omega=\man$, we may suppress
the domain: $\langle f,g\rangle_m = \langle f,g\rangle_{m,\M}$ and
$\|f\|_m=\| f\|_{m,\M}$.
\end{definition}
\paragraph{Metric equivalence} The exponential map allows us to
compare the Sobolev norms we've just introduced, to standard Euclidean
Sobolev norms as follows:
\begin{lemma}[{\cite[Lemma~3.2]{HNW}}]\label{Fran}
  For $m\in\nats$ and $0<r<\inj/3$, there are constants $0 < c_1 <c_2$
  so that for any measurable $\Omega \subset B_{r}$, for all $j\in
  \nats$, $j\le m$, and for any $p_0\in \M$, the equivalence
$$
c_1\| u\circ\Exp_{p_0}\|_{W_\sfp^j(\Omega)}\le
\|u\|_{W_\sfp^j(\Exp_{p_0}(\Omega))}\le c_2\|
u\circ\Exp_{p_0}\|_{W_\sfp^j(\Omega)}
$$
holds for all $u:\mathrm{\Exp}_{p_0}(\Omega)\to \reals$. The constants
$c_1$ and $c_2$ depend on $r$, $m$ and $\sfp$, but they are
\emph{independent} of $\Omega$ and $p_0$.
\end{lemma}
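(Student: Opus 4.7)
The plan is to work in normal coordinates at $p_0$, comparing the intrinsic Sobolev norm on $\Exp_{p_0}(\Omega)$ to a standard Euclidean Sobolev norm on $\Omega \subset B(0,r) \subset T_{p_0}\M \cong \reals^d$, and then to extract uniformity in $p_0$ from smoothness of the geometry together with compactness of $\M$.

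First I would fix $r < \inj/3$ and, for arbitrary $p_0\in\M$, pull the metric $g$, its inverse $g^{ij}$, the Christoffel symbols $\Gamma^k_{ij}$, and the volume density $\sqrt{\det g}$ back to $B(0,r) \subset T_{p_0}\M$ via $\Exp_{p_0}$. In these coordinates $g_{ij}(0)=\delta_{ij}$ and the $\Gamma^k_{ij}$ vanish at the origin; globally on $B(0,r)$ all of $g_{ij}, g^{ij}, \sqrt{\det g}, \Gamma^k_{ij}$ and their derivatives up to order $m$ are $C^\infty$ in $x$ and depend smoothly on the base point $p_0$. Since $\M$ is compact, these quantities (and the lower bound on $\det g$, guaranteed by (\ref{isometry})) are uniformly controlled above and below by constants depending only on $r$ and $m$.

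Next I would carry out pointwise tensor estimates. Formula (\ref{kth_order_covariant_deriv_tensor}) gives
\[
(\nabla^k f)_\hati = (\partial^k f)_\hati + \sum_{n=1}^{k-1}\sum_{\hatj\in\D^n} A_\hati^\hatj(x)(\partial^n f)_\hatj,
\]
with coefficients $A_\hati^\hatj$ built from the Christoffel symbols and their derivatives to order $k-1$, hence uniformly bounded. Combining this with the tensor inner product (\ref{metric_inner_prod_tensor}) and the uniform bounds on $g^{ij}$, I obtain the pointwise comparison
\[
| \nabla^k f|_{g,p}^\sfp \le C \sum_{n=0}^{k}\sum_{|\alpha|=n}\bigl|D^{|\alpha|}_\alpha(f\circ\Exp_{p_0})(x)\bigr|^\sfp,
\]
with $C$ independent of $p_0$ and $x\in B(0,r)$. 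Integrating the sum over $k\le j$ in $p$ against $\dif\mu(p)=\sqrt{\det g(x)}\,dx$ and using the uniform two-sided bound on $\sqrt{\det g}$ yields the upper inequality $\|u\|_{W_\sfp^j(\Exp_{p_0}(\Omega))}\le c_2\|u\circ\Exp_{p_0}\|_{W_\sfp^j(\Omega)}$; the $L_\infty$ case follows by replacing the integral by an essential supremum.

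For the reverse direction, observe that (\ref{kth_order_covariant_deriv_tensor}) is lower triangular in the order of the derivative with identity on the diagonal. Inverting it inductively on $k$, starting from $\nabla^0 f = f$, I obtain a dual formula
\[
(\partial^k f)_\hati = (\nabla^k f)_\hati + \sum_{n=1}^{k-1}\sum_{\hatj\in\D^n}\widetilde A_\hati^\hatj(x)(\nabla^n f)_\hatj,
\]
with smooth, uniformly bounded coefficients $\widetilde A_\hati^\hatj$. The same integration argument, together with the uniform lower bound on $\sqrt{\det g}$ and on the spectrum of $(g^{ij})$, produces the companion lower estimate with a constant $c_1>0$. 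The main subtlety, and the only place care is really needed, is the \emph{uniformity in $p_0$}: each individual pair of constants is a routine change-of-variables computation, but the fact that a single pair $(c_1,c_2)$ works for every $p_0\in\M$ relies on compactness of $\M$ together with the fact that the coefficients $A_\hati^\hatj$, $\widetilde A_\hati^\hatj$ and the metric quantities are continuous functions on the compact set $\overline{B(0,r)}\times \M$, hence uniformly bounded.
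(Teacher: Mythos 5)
Your argument is correct and is the standard one: pull back to normal coordinates at $p_0$, use (\ref{kth_order_covariant_deriv_tensor}) to compare $\nabla^k f$ with ordinary partials pointwise (noting the expansion is triangular with identity leading term, so it inverts), and invoke compactness of $\M$ to make the bounds on $g^{ij}$, $\sqrt{\det g}$, the $A^{\hatj}_{\hati}$, and their inverses uniform in $p_0$. The paper does not reprove this lemma but cites it from \cite[Lemma~3.2]{HNW}, and your reconstruction matches the method used there.
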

\paragraph{Besov spaces on $\M$}
Besov spaces can be defined and characterized in many equivalent
ways. For a discussion, see Triebel's book \cite[1.11, and Chapter
7]{Trieb} and the references therein.  Our definition follows
Triebel's.
\begin{definition}\label{Besov}
  For $0<s\le m$ and $1\le p< \infty$, we define the Besov space
  $B_{p,\infty}^s\bigl(\M\bigr)$ as the collection of functions in
  $L_p\bigl(\M\bigr)$ for which the following expression
$$\|f\|_{B_{p,\infty}^s(\M)}:=\sup_{t>0} t^{-s} K(f,t) $$
is finite, where the $K$-functional $K(f,\cdot):(0,\infty)\to
(0,\infty)$ is defined as
$$K(f,t) := \inf\left\{\|f - g\|_{L_p} + t^{2m}\|g\|_{W_p^{m}(\M)}: g\in W_p^{m}(\M) \right\}.$$
For $p=\infty$, the definition is the same after substituting
 $L_{p}\bigl(\M\bigr)$ by $C\bigl(\M\bigr)$ and $W_p^{m}(\M)$ by $C^{m}(\M)$.
\end{definition}
\subsection{Notation}
In order to distinguish balls on $\reals^d$ from those in $\M$, 
we denote the ball centered at $p\in \M$ having radius $r$ by $\b(p,r).$ 
(Euclidean balls are denoted $B(x,r)$.)
Given a finite set $\Xi\subset\M$, we define its  \emph{mesh norm}  (or \emph{fill distance}) $h$ 
and the \emph{separation distance} $q$ to be:
\begin{equation} \label{minimal-separation}
 h:=\sup_{p\in \M} \d(p,\Xi)\qquad \text{and}\qquad  q:=
\inf_{\substack{\xi,\zeta\in \Xi\\ \xi\ne \zeta} }
\d(\xi,\zeta).
\end{equation}
The mesh norm measures the density of $\Xi$ in $\M$, the separation radius determines 
the spacing of $\Xi$. The \emph{mesh ratio} $\rho:=h/q$ measures the uniformity of the 
distribution of $\Xi$ in $\M$. If $\rho$ is bounded, then we say that the point set $\Xi$ is 
quasi-uniformly distributed, or simply that $\Xi$ is quasi-uniform.

\section{Interpolation by Kernels}
\label{interpolation_kernels}
The purpose of this section is to discuss further this interpolation
problem and to present the kernels we employ. The kernels we consider
are fundamental solutions of certain elliptic PDEs. They happen also
to be conditionally positive definite, a well known class for which
interpolation is understood. In particular, interpolation is well
posed, and has a dual nature, as best interpolation from a function
space.

In \ref{ss_cpd} we discuss interpolation with conditionally positive
definite kernels, and present the associated problem of best
interpolation. In \ref{ss_examples} we present some motivating
examples for our problem: surface spline interpolation on spheres and
on $SO(3)$. In \ref{ss_polyharmonic} we give the formal definition of
the kernels we use and the operators they invert; we also discuss the
associated variational problem they solve.

\subsection{Interpolation with conditionally positive definite
  kernels}
\label{ss_cpd}
The kernels we consider in this article are conditionally positive
definite on the compact Riemannian manifold. As a reference on this
topic, we suggest \cite[Section 4]{DNW}.

\begin{definition}\label{cpd}
A kernel is conditionally positive definite with
  respect to a finite dimensional space $\Pi$ if, for any set of
  centers $\Xi$, the matrix $\bigl(k(\xi,\zeta)
  \bigr)_{\zeta,\xi\in\Xi}$ is positive definite on the subspace of
  all vectors $\alpha\in \comps^{\Xi}$ satisfying $\sum_{\xi\in\Xi}
  \alpha_{\xi} p(\xi) = 0 $ for $p\in \Pi$.
\end{definition}

This is a very general definition which we will make concrete in the
next subsections. Given a complete orthonormal basis
$(\phi_j)_{j\in\nats}$, of continuous functions, normalized in
$L_{\infty}$ (i.e., $\|\phi_j\|_{\infty}= 1$) any kernel
$$k(x,y):=\sum_{j\in \nats} \tilde{k}(j) \varphi_j(x)\overline{\varphi_j(y)}$$
with coefficients $\tilde{k}\in \ell_1(\nats)$ for which all but
finitely many coefficients $\tilde{k}(j)$ are positive is
conditionally positive definite with respect to $\Pi_{\J} =
\spam(\phi_j \mid j\in \J),$ where $\J = \{j\mid \tilde{k}(j) \le
0\},$ since, evidently,
\begin{eqnarray*}
  \sum_{\xi\in\Xi} \sum_{\zeta\in\Xi}  \alpha_{\xi}
  k(\xi,\zeta)\overline{
    \alpha_{\zeta}}
  &=&
  \sum_{\xi\in\Xi} \sum_{\zeta\in\Xi}  \alpha_{\xi} \overline{\alpha_{\zeta}}
  \left( \sum_{j\in\nats} \tilde{k}(j) \phi_j(\xi)
    \overline{\phi_j(\zeta)} 
  \right) \\
  &=& 
  \sum_{j\in\nats}\tilde{k}(j)\sum_{\xi,\zeta\in\Xi} \alpha_{\xi} 
  \phi_j(\xi)\overline{ \alpha_{\zeta}\phi_j(\zeta) } =
  \sum_{j\notin \J} \tilde{k}(j) \|\alpha\phi_j\|_{\ell_2 (\Xi)}^2 >0
\end{eqnarray*}
provided $\sum_{\xi} \alpha_{\xi} \phi_j(\xi) = 0$ for $j$ satisfying
$\tilde{k}(j)\le 0$.

In this case if the set of centers $\Xi\subset \M$ is unisolvent with
respect to $\Pi_{\J} = \spam(\varphi_j\mid j\in\J)$ (meaning that
$p\in \Pi_{\J} $ and $p(\xi) = 0$ for $\xi \in\Xi$ implies that $p=0$)
then the system of equations
 $$
 \left\{
\begin{array}{ll}
  \sum_{\xi\in\Xi} a_{\xi} k(\zeta,\xi) + \sum_{j\in\mathcal{J}} b_j 
  \varphi_j(\zeta)= y_{\zeta}&\quad \zeta\in\Xi\\
  \sum_{\xi\in\Xi} a_{\xi} \varphi_j(\xi) = 0&\quad j\in\J
\end{array}
\right.
$$
has a unique solution in $\comps^{\Xi} \times\comps^{\J}$ for each
data sequence $\bigl(y_{\zeta}\bigr)_{\zeta\in \Xi}\in \comps^{\Xi}.$
When data is sampled from a continuous function at the points $\Xi$
(i.e., $y_{\zeta} = f(\zeta)$) this solution generates a continuous
interpolant:
$$
I_{\Xi}f =I_{k,\J,\Xi} f = \sum_{\xi\in\Xi} a_{\xi} k(\cdot,\xi) +
\sum_{j\in\mathcal{J}} b_j \varphi_j
$$
with the property that it is the minimizer of the  semi-norm $\ns{\cdot}_{k,\J}$, called the ``native space'' norm, given by
\begin{equation}\label{NS_norm}
\ns{ \sum_{j\in\nats}\hat{u}(j) \varphi_j}_{k,\J}^2 
= \sum_{j\notin \J} \frac{|\hat{u}(j)|^2}{\tilde{k}(j)},
\end{equation} 
over all functions $u =  \sum_{j\in\nats}\hat{u}(j) $ for which $u(\xi) = y_{\xi}$, $\xi\in \Xi$.
%
If $k$ is conditionally positive definite with respect to the set
$\Pi_{\J}$, it will be conditionally positive definite with respect to
$\Pi_{\J'}$ for any $\J'\supset \J$. For this reason, the interpolant
and norm both are decorated by $k$ and $\J$.

This has the consequence that any two conditionally positive definite
kernels $k,k'$ which have eigenfunction expansions that coincide on
all but finitely many indices (say $\I$), produce the same
interpolants. That is: $I_{k,\I,\Xi} = I_{k',\I,\Xi}$.

\subsection{Examples of conditionally positive definite
  kernels}\label{ss_examples}
\begin{example}[Surface Splines]\label{example_ss}
  As a first example of a conditionally positive definite kernel, we
  take $\M= \reals^d$, and consider the kernels $k_m(x,\alpha) =
  \bfphi_s(x - \alpha)$ given by the functions
$$\bfphi_s(x)=
C_{m,d}\begin{cases}
|x|^{2s}\log|x|\quad& d\, \text{is even}\\
|x|^{2s}\quad&d\,\text{is odd}
\end{cases}$$ where $s+d/2=m$.  For a certain $C_{m,d}\ne 0$, this is
a fundamental solution for the operator $ \Delta^m$.

Because of the positivity of the generalized Fourier transform, one can
see that $\bfphi_s$ is conditionally positive definite on $\reals^d$
with respect to $\Pi_{m-1}$.  These have been studied by Duchon
\cite{D1,D2}, and they comprise some of the earliest and most popular
examples of conditionally positive definite kernels.

Although our focus in this paper is on kernels on compact manifolds,
the family of surface splines acts as a useful benchmark, since they
have a simple, direct representation, as well as being conditionally
positive definite, not to mention that for certain interpolation
problems, their Lagrange functions decay rapidly (this was demonstrated in a least
squares sense by Matveev in \cite[Lemma 5]{Mat} and pointwise in \cite{HNW}) 
and have a uniformly bounded Lebesgue constant (cf. \cite{HNW}).
 \end{example}
 
\vspace{6pt}
\begin{example}[Restricted Surface Splines on
  $\sphere$] \label{example_rss} When $\M= \sphere$, the eigenvalues
  of the Laplace--Beltrami operator are $\mu_{\ell} = \ell(\ell+d-1)$
  and each eigenvalue has $N(d,\ell) =
  \frac{(2d+\ell)\Gamma(\ell+d-1)}{\Gamma(\ell+1)\Gamma(d)}$ linearly
  independent eigenfunctions, the spherical harmonics $Y_{\ell,m}$.

  We now introduce a family of kernels known as the restricted surface
  splines.  These are kernels indexed by $m\in \nats$, $m>d/2$. By
  writing $m=s+d/2$, we give the zonal expression
\begin{equation}\label{def_rss}
\phi_{s} (t):= 
\begin{cases}
|1-t|^s \log|1-t|&\quad 
s\in \nats\\
|1-t|^s&\quad 
s \in \nats +1/2.
\end{cases}
\end{equation}
When $d$ is even, $s$ is integral and the first formula is used. 
When $d$ is odd, the second is used. For a given $d$ and an
integer $m>d/2$, we write $k_m(x,y) = \phi_s(x\cdot y)$ to 
denote the corresponding kernel on $\sphere$.

A spherical harmonic expansion of the restricted surface splines can
be found in \cite[Equations (2.12) \& (2.20)]{BaHu}.  It is known
that, for $m>d/2$, $k_m(x,y) =
\sum_{\ell}\sum_{j}\widetilde{k}_m(\ell,j)Y_{\ell,j}(x)Y_{\ell,j}(y)$, 
where
\begin{equation}\label{Gegenbauer_expansion}
  \widetilde{k}_m(\ell,j)  = C_m \prod_{\nu=1}^m
  \left[\left(\ell+\left(\frac{d-1}{2}\right)\right)^2 - 
    (\nu-1/2)^2\right]^{-1},
  \quad \text{for } \ell>s.
\end{equation}
When $d$ is odd, this equation holds for all $\ell$.

From this formula, it follows that $k_m$ is conditionally positive
definite with respect to the space $\Pi_{\lfloor s\rfloor} := \spam
(Y_{\ell,j}\mid \ell\le s, j \le N(d,\ell))$.

A second consequence is that, by a possible slight correction of the
spherical harmonic expansion (discussed below), $k_m$ is the
fundamental solution for a differential operator of order $2m$ that is
polynomial in $\Delta$:
$$
\L_m:= C_m \prod_{\nu=1}^m \left[ \Delta - (\nu -
  d/2)(\nu+d/2+1)\right].
$$

We note that when $d$ is odd, the operator $\L_m$ is invertible on
$W_2^{2m}(\sphere)$. Indeed, it is nonvanishing on each spherical
harmonic $Y_{\ell,m}$.

When $d$ is even, the Fourier coefficients of the kernel follow
(\ref{Gegenbauer_expansion}) for $\ell> s$ only, but $\L_m$
annihilates spherical harmonics of degree $s$ or less. Indeed, in this
case, we can re-index the operator to get:
\begin{eqnarray*}
  \L_m &=& C_m  \prod_{\nu=1}^{d/2-1} \left[ \Delta - 
    (\nu - d/2)(\nu+d/2-1)\right] 
  \prod_{\nu=d/2}^{m} \left[ \Delta - (\nu - d/2)(\nu+d/2-1)\right]\\
  &=& C_m  \prod_{\nu=1}^{d/2-1} \left[ \Delta - (\nu - d/2)(\nu+d/2-1)\right] 
  \prod_{J=0}^{m-d/2} \left[ \Delta - J(J+d-1)\right].
\end{eqnarray*}
So $\L_m$ annihilates all the  spherical harmonics of order up to $s = m-d/2$.

In other words, for sufficiently smooth functions, say $f\in
C^{2m}(\sphere)$ represented by the series $f= \sum_{\ell=0}^{\infty}\sum_{m=1}^{N(d,\ell)}
\widehat{f}(\ell,j)Y_{\ell,j}$,
$$
f(x) = \int_{\sphere} \L_m [f ] (\alpha) \phi_s(x\cdot\alpha) \dif \mu
(\alpha) + p_f
$$
where we add a spherical harmonic term
$p_f=\sum_{\ell=0}^{s}\sum_{j=1}^{N(d,\ell)}
\widehat{f}(\ell,j)Y_{\ell,j}\in \Pi_{s}$ when $d$ is even (when $d$
is odd, $p_f=0$).
\end{example}

\begin{example}[Surface Splines on $SO(3)$]\label{example_so3}
When $\M= SO(3)$, the group of  proper rotations of $\reals^3$, 
the eigenvalues of the Laplace--Beltrami operator are  
$\mu_{\ell} = \ell(\ell+1)$ and each eigenvalue is associated with
$N(\ell) = (1+2\ell)^2$ linearly 
independent eigenfunctions, called Wigner D-functions and denoted by
$(D_{j,\iota}^{\ell})_{(|j|, |\iota|\le \ell)}$.

For $m\ge 2$ and $s= m-3/2$,
the surface spline kernels 
are
\begin{equation}\label{def_so3}
\bfk_m(x,y) = \left(\sin\left( \frac{\omega(y^{-1}x)}{2}\right)\right)^{2m-3},
\end{equation}
where $ \omega(z)$ is the rotational angle of $z\in SO(3)$.  The
corresponding Wigner D-function expansion $\bfk_m(x,y) =
\sum_{\ell}\sum_{j,\iota}\widetilde{\bfk}_m(\ell,j,
\iota)D_{j,\iota}^{(\ell)}(x)D_{j,\iota}^{(\ell)}(y)$ is discussed in
\cite[Lemma 2]{HS}, where it is shown that for some $C_m\ne 0$,
$$
\widetilde{\bfk}_m(\ell,j,\iota) = C_m \prod_{\nu=-(m-1)}^{m-1}
\left[\ell+ \nu +\frac{1}{2} \right]^{-1}.
$$ 
Thus, $\bfk_m$ is conditionally positive definite with respect to
the space 
$\Pi_{m-2} = \spam \{ D_{j,\iota}^{\ell}\mid \ell\le m-2,\, |j|,
|\iota|\le \ell\}$.

It also follows (from \cite[Lemma 3]{HS}) that $\bfk_m$ is the fundamental solution for the
differential operator of order $2m$ having the form:
$$
\L_m:= C_m \prod_{\nu=0}^{m-1} \left[\Delta - (\nu^2-1/4)\right] 
$$
in the sense that for $f\in C^{2m}$, $f = \sum_{\ell=0}^{\infty}
\sum_{|j|,|\iota|\le \ell}\widehat{f}(\ell,j,\iota)
D_{j,\iota}^{\ell}$ the formula
$$
f(x) = \int_{SO(3)} \L_m [f] (\alpha) \bfk_m(x,\alpha) \dif
\mu(\alpha)  
$$
holds true.
\end{example}

\subsection{Polyharmonic and related kernels}\label{ss_polyharmonic}

The kernels we wish to treat are fundamental solutions of differential
operators that are polynomial in the Laplace--Beltrami operator, or are
directly related to them. Since, on a compact Riemannian manifold
$\Delta$ is a self adjoint operator with a countable sequence of
nonnegative eigenvalues $\lambda_j\le \lambda_{j+1}$ having $+\infty$
as the only accumulation point, we can express the kernel in terms of
the associated eigenfunctions $\Delta \phi_j = \lambda_j \phi_j$.  We
now make this clear with a formal definition.
 
 
  \begin{definition}\label{polyharmonic_kernels}
   Let $m\in \nats$ such that $m>d/2$. We say that the kernel
   $k_m:\M\times\M\to \reals$ is polyharmonic if the following hold:
 \begin{enumerate}
 \item There exists a polynomial $Q(x) =\sum_{\nu=0}^m c_{\nu}
   x^{\nu}$ in $\Pi_m(\RR)$, with the highest order coefficient $c_m>
   0$, so that $Q(x)>0$ for all $x$ sufficiently large. Let the
   corresponding differential operator of order $2m>d$ be given by
$$
\L_m  = \sum_{\nu=0}^m c_{\nu} \Delta^{\nu} = Q(\Delta),
$$
and let $\J\subset \nats$ be a finite set that includes
all $j$ for which the eigenvalue $Q(\lambda_j)$ of $\L_m$ satisfies
$Q(\lambda_j)\le 0$. (In addition to this finite set, $\J$ may also include a finite number $j$'s for which $Q(\lambda_j)>0$.)  

\item The kernel has the eigenfunction expansion 
$k_m(x,y)=
  \sum_{j\in\nats} \tilde{k}_m(j) \psi_j(x)\psi_j(y)$, with
  coefficients $\tilde{k}_m(j) = 1/Q(\lambda_j)$ for $j\notin \J$. (On
  $\J$, $\tilde{k}_m(j)$ can assume arbitrary values.)
\end{enumerate}
\end{definition}

It follows immediately from this definition that $k_m$ is
conditionally positive definite with respect to the finite dimensional
space $\Pi_{\J}=\spam_{j\in\J}\varphi_j$. Another consequence is that, for $f\in C^{\infty}$,
 \begin{equation}\label{reproduction}
 f(x) = \int_{\M} \L_m [f - p_f](\alpha) k_m(x,\alpha) \dif \mu(\alpha)+p_f
 \end{equation}
 where $p_f= \sum_{j\in\J} \mathrm{proj}_j f$ is the orthogonal projection onto $\Pi_{\J}$. 
 
As previously stated, the interpolation operator $I_{\Xi,k_m,\J}$ produces the minimizer of the
seminorm $\ns{ u}_{k_m,\J}$. 
Since $k_m(x,y) = \sum_{j\notin\J} \tilde{k}_m(j) \varphi_j(x)\varphi_j(y)$
and, for $j\notin \J$, 
$$
\tilde{k}_m(j) = Q(\lambda_j)^{-1} = \left(\sum_{\nu=0}^m c_{\nu} (\lambda_j)^{\nu}\right)^{-1},
$$
which is the inverse symbol of $\L_m$,  it follows from (\ref{NS_norm}) that
$$
\ns{ u}_{k_m,\J}^2
=\sum_{j\notin \J} \frac{|\hat{u}(j)|^2}{\tilde{k}(j)}
=
\langle \L_m u, u\rangle_{L_2(\M)}
-\sum_{j\in\J} Q(\lambda_j)| \hat{u}(j)|^2.
$$
This relation connects the norm $\ns{ u}_{k_m,\J}$ with the quadratic form
 $ \langle \L_m u, u\rangle_{L_2(\M)}$. The goal of the next section is to study this quadratic form.
 
 \section{Operators and quadratic forms}
\label{quadratic_forms}
Of the two quadratic forms considered, the one derived from the native
space seminorm: $\ns{u}_{k_m,\J}^2$, and the one derived from the
operator $[u]_{}^2:=\langle\L_m u, u\rangle_{L_2(\M)}$, the latter has
much to offer from the point of view of analysis, but the former is
tied to the variational problem satisfied by the kernel
interpolants. The object of this section is to attain a better
understanding of $[u]_{}^2$.
 
To this end, we seek an analogue of the bilinear form $\langle \L
u,v\rangle_{L_2(\M)}$ -- one that is defined on measurable subsets of
$\M$.  A reasonable goal would be to find a form that is comparable to
the corresponding Sobolev form $\sum_{j=0}^m \langle
u,v\rangle_{j,\Omega}$, where $$\langle u,v\rangle_{j,\Omega} :=
\int_{\Omega} \langle \nabla^j u, \nabla^j v\rangle_{g,x} \dif
\mu(x).$$ This is the bilinear form used to define the Sobolev space
inner product:
 (\ref{def_spn}) of Definition \ref{Sob_Norm}
 for $\Omega \subset \M$.
 
 The rest of this section is structured as follows. In \ref{ss_LB} we
 demonstrate that on a wide class of manifolds, the elliptic operator
 composed of covariant and contravariant derivatives, which is at the
 heart of \cite{HNW}, is a polynomial in $\Delta$ and, conversely, the
 Laplace--Beltrami operator has an expansion in terms of these elliptic
 operators. This permits us immediately to classify the Sobolev
 kernels on spheres (as well-known kernels of a type studied in
 \cite{Hsphere,MNPW}) and to give concrete approximation results for
 them.  In \ref{ss_LBzeros} we present analogues to the bilinear form
 generated by $\L_m$ on measurable subsets. Using this, we demonstrate
 that this bilinear form behaves like a norm for functions with many
 zeros.
 
 \subsection{The Laplace--Beltrami operator and the covariant
   derivative}\label{ss_LB}
 Simply considering $\langle \L u,v\rangle_{L_2(\Omega)}$ for
 measurable subsets $\Omega \subset \M$ is not suitable, since there
 will be many functions for which $\L u$ may vanish on $\Omega$.  This
 is true even on $\reals^d$ when $\L= \Delta^m$. In this case there
 are many polyharmonic functions (and even harmonic functions!) on a a
 given subdomain $\Omega$ that may have nonzero Sobolev norms, despite the fact
 that they are in the kernel of $\L$.

 Guided by the observation that on $\reals^d$, $\int_{\reals^d}
 v\Delta^m u = \int_{\reals^d}\langle \nabla^m v\nabla^m u\rangle$ holds for test
 functions $u,v$, we first attempt to compare $\Delta^m$, the
 principle part of $\L$, to $(\nabla^m)^* \nabla^m$.  It is important
 to stress that $(\nabla^m)^*$ means the adjoint to $\nabla^m$, in the
 $L_2(\M)$ inner product, as defined in (\ref{adj_cov_deriv}).  \footnote{and not with respect to
   $L_2(\Omega)$ -- e.g., even though $\Delta^m = (-1)^m(\nabla^m)^*
   \nabla^m$ holds on $\reals^d$, it is not the case that
   $(-1)^m\int_{\Omega} v\Delta^m u = \int_{\Omega}\nabla^m v\nabla^m
   u$ for subsets $\Omega \subset \reals^d$.}
   To this end, we make the following assumption
   \begin{assumption} \label{Manifold_Assumption}
   For all $k\in \nats$, there exists a real polynomial $p_{k-1}$ of degree $k-1$, such that
\[
(\nabla^k)^* \nabla^k =(-1)^k\Delta^k + p_{k-1}(\Delta).
\]
   \end{assumption}

 A class of Riemannian manifolds that satisfy this are the
 \emph{two-point homogeneous spaces} \cite{Helgason_1984}, both
 compact and non-compact. A manifold $\M$ is homogeneous if $\M =
 G/K$, where $G$ is a Lie group and $K$ is a Lie subgroup of
 $G$. \emph{Two-point} homogeneous means that for any two pairs of
 points $p,q$ and $p',q'$ such that the distances $d(p,q) = d(p',q')$
 there is an isometry $\Phi\in G$ such that $p'=\Phi(p)$ and
 $q'=\Phi(q)$. These manifolds\footnote{We note that they have also
appeared in other approximation theory literature. See, e.g., \cite{BKLT, KLT}}
 have been completely classified (see
 \cite[p.\ 167 \& p.\ 177]{Helgason_1984} for lists), and include
 $\sph^d$ and the real projective spaces $\mathsf P^d$. (The rotation
 group $SO(3) =\mathsf P^3$.)

\begin{lemma}\label{homogeneous} Let $\M$ be a two-point homogeneous space.
Then $\M$ satisfies Assumption \ref{Manifold_Assumption}.
\end{lemma}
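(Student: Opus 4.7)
The plan is to combine the classical structure theorem for the algebra of $G$-invariant differential operators on a two-point homogeneous space with a principal-symbol computation. The principal input is a theorem of Helgason \cite{Helgason_1984}: when $\M = G/K$ is two-point homogeneous, the algebra $\mathbb{D}_G(\M)$ of $G$-invariant differential operators on $C^\infty(\M)$ coincides with the polynomial ring $\RR[\Delta]$ generated by the Laplace--Beltrami operator. Granted this, establishing Assumption \ref{Manifold_Assumption} reduces to showing that $(\nabla^k)^*\nabla^k$ belongs to $\mathbb{D}_G(\M)$ and then reading off the degree and leading coefficient of the polynomial representing it.

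First I would verify $G$-invariance. Because the Levi-Civita connection is uniquely determined by the metric $g$, it commutes with every isometry $\Phi\in G$: for any scalar $f$, $\nabla^k(f\circ\Phi) = \Phi^*(\nabla^k f)$, where $\Phi^*$ is the natural pullback action on $k$-covariant tensor fields. Since $\Phi$ preserves the Riemannian volume $d\mu$ and the fiberwise tensor inner product (\ref{metric_inner_prod_tensor}), it preserves the $L_2$ pairing that defines $(\nabla^k)^*$; consequently $(\nabla^k)^*$ also intertwines with $\Phi^*$. Composition then gives $[(\nabla^k)^*\nabla^k](f\circ\Phi) = [(\nabla^k)^*\nabla^k f]\circ\Phi$, so $(\nabla^k)^*\nabla^k \in \mathbb{D}_G(\M)$. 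By Helgason's theorem there is a real polynomial $P$ with $(\nabla^k)^*\nabla^k = P(\Delta)$.

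To identify $P$, I would compute principal symbols at a point $p\in\M$. The symbol of $\nabla^k$ on scalars at $\xi\in T_p^*\M$ is the map $c\mapsto c\,\xi^{\otimes k}$, and $k$-fold integration by parts shows that the symbol of $(\nabla^k)^*$ on a $k$-tensor $T$ is $(-1)^k\langle \xi^{\otimes k}, T\rangle_{g,p}$. Composing yields $\sigma_{2k}((\nabla^k)^*\nabla^k)(\xi) = (-1)^k|\xi|_{g,p}^{2k}$, while the coordinate formula for $\Delta$ gives $\sigma_2(\Delta)(\xi) = |\xi|_{g,p}^2$ and therefore $\sigma_{2k}(\Delta^k)(\xi) = |\xi|_{g,p}^{2k}$. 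Consequently $(\nabla^k)^*\nabla^k$ has order exactly $2k$, which forces $\deg P = k$, and matching leading symbols forces the top coefficient of $P$ to be $(-1)^k$. Setting $p_{k-1}(t) := P(t) - (-1)^k t^k$ produces a polynomial of degree at most $k-1$ satisfying the claimed identity.

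The main obstacle is really just having Helgason's structure theorem available; the invariance check and symbol computation are otherwise routine. If a self-contained argument is preferred, two-point homogeneity can be exploited directly: the isotropy $K$ acts transitively on each unit cotangent sphere, so every $K$-invariant homogeneous polynomial on $T_p^*\M$ is a scalar multiple of a power of $|\xi|^2_{g,p}$, which forces every $G$-invariant differential operator to have even order and to reduce to a polynomial in $\Delta$ by induction on order.
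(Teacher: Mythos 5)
Your proposal is correct and follows essentially the same route as the paper: establish $G$-invariance of $(\nabla^k)^*\nabla^k$, invoke Helgason's theorem that invariant differential operators on a two-point homogeneous space are polynomials in $\Delta$ (the paper cites \cite[Proposition 4.11, p.~288]{Helgason_1984}), and then identify the degree and leading coefficient $(-1)^k$ by comparing top-order terms. Your abstract naturality argument for invariance and your principal-symbol computation are just cleaner packagings of the paper's coordinate-chart argument and its ``compare highest order derivatives'' step, so there is nothing substantive to change.
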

\begin{proof} The proof proceeds in two steps. The first is showing
  that if $\Phi:\M\to \M$ is a diffeomorphism that is also an isometry
  (i.e., preserves distances), then the operator $D:=(\nabla^k)^*
  \nabla^k$ is invariant in the sense that for any smooth function
  $f:\M\to \RR$, $Df = (D(f\circ \Phi))\circ \Phi^{-1}$. We will
  follow a technique used in \cite[Proposition~2.4,
  p.~246]{Helgason_1984}.  Let $(\fraku,\phi)$ be a local chart, with
  coordinates $x^j=\phi^j(p)$, $j=1,\ldots,n$ for $p\in \fraku$. Since
  $\Phi$ is a diffeomorphism, $(\Phi(\fraku), \phi\circ \Phi^{-1})$ is
  also a local chart. Let $\psi=\phi\circ \Phi^{-1}$, and use the
  coordinates $y^j=\psi^j(q)$ for $q\in \Phi(\fraku)$. The choice of
  coordinates has the effect of assigning the \emph{same} point in
  $\RR^n$ to $p$ and $q$, provided $q=\Phi(p)$ -- i.e., $x^j(p) =
  y^j(q)$.  Thus, relative to these coordinates the map $\Phi$ is the
  identity, and consequently, the two tangent vectors
  $(\frac{\partial}{\partial y^j})_q\in T_q\M $ and
  $(\frac{\partial}{\partial x^j})_p\in T_p\M$ are related via
\[
\left(\frac{\partial}{\partial y^j}\right)_{\Phi(p)} =
d\Phi_p\left(\frac{\partial}{\partial x^j}\right)_p.
\]
So far, we have only used the fact that $\Phi$ is a
diffeomorphism. The map $\Phi$ being in addition an isometry then
implies that
\[
\left\langle \frac{\partial}{\partial y^j}, \frac{\partial}{\partial
    y^k} \right\rangle_{\Phi(p)} = \left\langle
  d\Phi_p\left(\frac{\partial}{\partial x^j}\right),
  d\Phi_p\left(\frac{\partial}{\partial x^k}\right)
\right\rangle_{\Phi(p)}= \left\langle \frac{\partial}{\partial x^j},
  \frac{\partial}{\partial x^k} \right\rangle_p.
\]
The expression on the left is the metric tensor at $\Phi(p)$,
$g^{jk}(y)$, and on the right, the metric tensor at $p$,
$g^{jk}(x)$. The equation above implies that, as \emph{functions} of
$y$ and $x$, $g^{jk}(y) = g^{jk}(x)$. This means that the expressions
for the Christoffel symbols, covariant derivatives and various
expressions formed from them will, as functions, be the same. Since
the operator $D = (\nabla^k)^* \nabla^k$ is constructed from such
objects, it follows that $Df = (D(f\circ \Phi))\circ \Phi^{-1}$, and
so $D$ is invariant.

The second step makes use of two-point homogeneity. Since
$(\nabla^k)^* \nabla^k$ is invariant under every isometry $\Phi$ in
$G$, applying \cite[Proposition 4.11, p.~288]{Helgason_1984} yields
the result that $(\nabla^k)^* \nabla^k$ is a polynomial in the
Laplace--Beltrami operator: $(\nabla^k)^* \nabla^k = a_{k+1}\Delta^k
+a_k\Delta^{k-1} +\cdots a_0$. Comparing terms in the highest order
derivatives involved in coordinate expressions for both sides shows
that $a_{k+1}=(-1)^k$.
\end{proof}

Induction ensures that
 \[
 \Delta^k - (-1)^k(\nabla^k)^* \nabla^k = c_{k-1} (\nabla^{k-1})^*
 \nabla^{k-1} + c_{k-2} (\nabla^{k-2})^* \nabla^{k-2} \dots +c_0.
 \]
From this we have the following.
\begin{lemma}\label{Lap_Cov} 
  Suppose $\M$ is a Riemannian manifold satisfying Assumption
  \ref{Manifold_Assumption}.  If $Q(x) = c_m x^m + \dots +c_0$ is a
  (real) polynomial of degree $m$, then there exist real numbers
  $a_j$, with $a_m = c_m,$ so that
$$
Q(\Delta) = \sum_{j=0}^m a_j (\nabla^j)^* \nabla^j.
$$
Conversely, for any constants $b_j$, there is a real polynomial $p$
for which the operators $p(\Delta)$ and $\sum_{j=0^n} b_j
(\nabla^j)^{*}\nabla^j$ coincide.
\end{lemma}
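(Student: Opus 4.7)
The plan is to deduce both directions of the lemma directly from Assumption \ref{Manifold_Assumption} via induction on the degree. The forward direction requires expressing each power $\Delta^k$ as a real linear combination of the operators $(\nabla^j)^{*}\nabla^j$ with $j\le k$; the converse is almost immediate, since by hypothesis each $(\nabla^j)^{*}\nabla^j$ is already a polynomial in $\Delta$.

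For the forward direction, I will first establish by induction on $k$ the following claim: $\Delta^k$ lies in the $\RR$-span of $\{(\nabla^j)^{*}\nabla^j:0\le j\le k\}$, with leading coefficient $(-1)^k$ in front of $(\nabla^k)^{*}\nabla^k$. The cases $k=0$ (trivial) and $k=1$ (where $\Delta=-\nabla^{*}\nabla$) are clear. For the inductive step, Assumption \ref{Manifold_Assumption} gives
\[
(\nabla^k)^{*}\nabla^k \;=\; (-1)^k\Delta^k + p_{k-1}(\Delta),
\]
so that
\[
\Delta^k \;=\; (-1)^k(\nabla^k)^{*}\nabla^k \;-\; (-1)^k p_{k-1}(\Delta).
\]
Since $p_{k-1}$ has degree at most $k-1$, the inductive hypothesis rewrites $p_{k-1}(\Delta)$ as a real linear combination of $(\nabla^j)^{*}\nabla^j$ with $j\le k-1$, completing the induction. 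Now writing $Q(\Delta)=\sum_{\nu=0}^m c_{\nu}\Delta^{\nu}$, substituting the induced representation of each $\Delta^{\nu}$, and collecting terms produces real coefficients $a_j$ with the asserted leading-order behaviour at $a_m$ (up to the sign $(-1)^m$ dictated by the assumption, which tracks through unambiguously).

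For the converse, I invoke Assumption \ref{Manifold_Assumption} once more: each operator $(\nabla^j)^{*}\nabla^j = (-1)^j\Delta^j+p_{j-1}(\Delta)$ is a real polynomial in $\Delta$. Any finite real linear combination $\sum_{j=0}^n b_j(\nabla^j)^{*}\nabla^j$ is therefore itself a real polynomial $p(\Delta)$ in $\Delta$, whose coefficients are obtained by simple bookkeeping.

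There is no real obstacle here: everything is linear-algebraic, driven entirely by the invertibility of the triangular change of basis between $\{\Delta^k\}_{k=0}^m$ and $\{(\nabla^k)^{*}\nabla^k\}_{k=0}^m$ (with nonzero diagonal entries $(-1)^k$) which is packaged inside Assumption \ref{Manifold_Assumption}. The only bookkeeping care needed is to track that the leading coefficient of the representation of $\Delta^m$ is $(-1)^m$, so that the identification of the top-order coefficient $a_m$ in $Q(\Delta)$ is correct.
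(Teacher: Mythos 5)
Your argument is correct and is essentially the paper's own proof: the paper merely records the inductive consequence of Assumption \ref{Manifold_Assumption}, namely $\Delta^k - (-1)^k(\nabla^k)^{*}\nabla^k = c_{k-1}(\nabla^{k-1})^{*}\nabla^{k-1} + \cdots + c_0$, and then asserts the lemma, which is exactly the induction, the substitution into $Q(\Delta)$, and the immediate converse that you spell out. The sign issue you flag (the induction literally yields $a_m=(-1)^m c_m$ rather than the stated $a_m=c_m$) is inherited from the paper itself, whose convention $\Delta=-\nabla^{*}\nabla$ sits uneasily with its later use of $a_m>0$, so it is a defect of the paper's sign bookkeeping rather than a gap in your proof.
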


An immediate consequence is that the Sobolev kernels $\kappa_{m,\M}$
considered in \cite{HNW} and \cite{HNSW} are Green's functions for
operators of the form $Q(\Delta)$, with $Q$ a real polynomial of
degree $m$.

We note, furthermore, that because the lead coefficient $c_m$ of $Q$  is
assumed positive (see Definition \ref{cpd}), we have that $a_m>0$.

\subsection{Connecting the quadratic form to the Sobolev norm}
\label{ss_LBzeros}
The benefit of Lemma \ref{Lap_Cov} is that we can use it to obtain
useful local versions of the form $\langle \L
u,v\rangle_{L_2(\M)}$. In particular, we consider, for $\L_m =
Q(\Delta)$, coefficients $a_0,\dots, a_m$ as guaranteed by Lemma
\ref{Lap_Cov}.  When $\Omega\subset \M$, the form $ \langle \sum_{j =
  0}^m a_j ( \nabla^j)^*\nabla^j u,v\rangle_{L_2(\Omega)} = \sum_{j =
  0}^m a_j \langle u,v\rangle_{j,\Omega} $ is the local version of
$\langle \L u,v\rangle_{L_2(\M)} $.  Indeed, we have
$$
[u]_{m,\Omega}^2 := \int_{\Omega} \beta(u,u)_x \dif \mu
$$
where 
$\beta(\cdot,\cdot)_x:C^{\infty}\times C^{\infty} \to \reals$ 
is the bilinear form
$\beta(u,v)_x = \sum_{j = 0}^m a_j \langle \nabla^j u,\nabla^j v\rangle_{x} $.
Clearly  for 
\begin{equation}\label{bars}
\overline{a} := \max_{j\le m}|a_j|
\qquad \text{and}\qquad 
\underline{a} = \max_{j\le m-1}|a_j|
\end{equation}
we have
\begin{equation}\label{bilinear_assumption}
a_m \langle \nabla^m u, \nabla^m u\rangle_x 
- 
\underline{a}\sum_{j=0}^{m-1} \langle \nabla^j u, \nabla^j u\rangle_x
\le 
\beta(u,u)_x
\le 
\overline{a} \sum_{j=0}^m\langle \nabla^j u, \nabla^j u\rangle_x .
\end{equation}
If we integrate over a region $\Omega\subset \M$, we obtain
$$
a_m |u|_{W_2^m(\Omega)}^2
 -
\underline{a}\|u\|_{W_2^{m-1}(\Omega)}^2
\le
[ u]_{m,\Omega}^2 := 
 \int_{\Omega}  \beta(u,u)_x \dif \mu (x) 
\le 
\overline{a} \|u\|_{W_2^m(\Omega)}^2.
$$

Now if $u$ vanishes on a sufficiently dense set $X_0\subset\Omega$,
then a corollary of the ``zeros'' estimate Theorem
\ref{omega_manf_continuous_est}, given in Section
\ref{lipschitz_domains}, will imply that
$
a_m |u|_{W_2^m(\Omega)}^2
 -
\underline{a} \|u\|_{W_2^{m-1}(\Omega)}^2
 \ge
\varepsilon \|u\|_{W_2^m(\Omega)}^2
$
where $\varepsilon$ depends on $a_m$, $\underline{a}$, properties of
$X_0$ and the boundary of $\Omega$, but nothing else.
The two most important types of subset $\Omega$, for our purposes, are
annuli $\a$ and complements
of balls $\b^{c}$. 

{\bf Annuli}. In this case we consider an annulus $\a =
B(p_0,R)\setminus B(p_0,R-t)$ with outer radius $R<\frac{1}{2}\inj$
and we apply Corollary \ref{zl_annulus} to a function $u$ that
vanishes on a set $X$ satisfying $h=h(X,\a) \le \gamma t$.  If, in
addition, $h\le \sqrt{\frac{a_m}{2\Lambda\overline{a}}}$, we have
$\|u\|_{W_2^{m-1}(\a)}^2\le \Lambda h^2 |u |_{W_2^m(\a)}^2\le
\frac{a_m}{2\underline{a}}|u|_{W_2^m(\a)}^2$ and, simultaneously,
$\frac{a_m}{4} \|u\|_{W_2^m(\a)}^2 \le \frac{a_m}{2}
|u|_{W_2^m(\a)}^2$, so $a_m |u|_{W_2^m(\Omega)}^2 - \underline{a}
\|u\|_{W_2^{m-1}(\Omega)}^2 \ge \frac{a_m}{4} \|u\|_{W_2^m(\Omega)}^2
$ follows and
\begin{equation}\label{annulus_comp_equivalence}
\frac{a_m}{4} \|u\|_{W_2^m(\a)}^2 
\le 
 \int_{\a}  \beta(u,u) \dif \mu \le \overline{a} \,\|u\|_{W_2^m(\a)}^2.
\end{equation}
(Note that $h$ must be chosen to be less than
$\sqrt{\frac{a_m}{2\Lambda\overline{a}}}$,
as well as $\gamma t$).

{\bf Complements of balls} We consider the punctured manifold $\b^c =
\M\setminus \b(p_0,R)$ with outer radius $R<\frac{1}{2}\inj$. We apply
Corollary \ref{zl_com} to a function $u$ that vanishes on a set $X$
satisfying $h=h(X,\a) \le \gamma t$.  By picking 
$h<\sqrt{  \frac{a_m}{2\Lambda\overline{a}}}$,
\begin{equation}\label{ball_comp_equivalence}
  \frac{a_m}{4} \|u\|_{W_2^m(\b(p,r)^c)}^2 
  \le  
  \int_{\b(p,r)^c}  \beta(u,u) \dif \mu \le \overline{a} 
  \|u\|_{W_2^m(\b(p,r)^c)}^2.
\end{equation}
follows. (Note that in this case, $h$ must be less than $h_0$ and
$\sqrt{ \frac{a_m}{2\Lambda\overline{a}}}$, but that it can be
chosen independently of $r$. In Lemma  \ref{simple_annulus} we 
refer to this critical value, the  minimum of $h_0$ and $\sqrt{ \frac{a_m}{2\Lambda\overline{a}}}$,
 as $H_0$.)

\section{The Lagrange function}
\label{lagrange_function}
We wish to uniformly bound the Lagrange function $\chi_\xi(x)$ and
establish its rate of decay as $x$ moves away from its center
$\xi$. There are two cases that we will consider. 

The first is the special case that involves interpolation by a
polyharmonic kernel $k_m$ (cf. Definition \ref{polyharmonic_kernels})
that is conditionally positive definite with respect to a space $\Pi$
annihilated by the operator $\L_m$. This case is significant because
the rate of decay is exponential (cf. Theorem~\ref{simple_decay}). It
includes the restricted surface splines on $\sph^d$ discussed in
Example \ref{example_rss}, for $d$ even.

The second case is the general one, where we do not assume any
annihilation properties concerning the space $\Pi$ that is to be
reproduced. This case includes the surface splines in odd
dimensions. The decay rate in this case is algebraic, rather than
exponential.

These results are similar to ones for the case of a lattice in $\RR^d$
\cite{buhmann_1990}. The restricted surface splines defined in
(\ref{def_rss}) have Lagrange functions that decay exponentially, for
$d$ even, but only algebraically for $d$ odd. For $d$ odd, the lattice
case has an additional family of polyharmonic splines with exponential
decay. We conjecture that this exponential decay holds for
odd-dimensional spheres, and that we have obtained only algebraic
decay is simply an artifact of the proof.

\begin{table}[ht]
\begin{center}
\begin{tabular}{|c|c|c|}
\hline
 {\bf Notation	}
& {\bf Constant}		
&{\bf Introduce in ...}
\\
 \hline
$a_m$	
& (Positive) lead coefficient of the polynomial $Q(x)$	
& Lemma \ref{Lap_Cov} 	\\ 
\hline
$\overline{a}$& 
maximum coefficient of $Q(x)$		&
(\ref{bars}) \\ 
&(in absolute value)& \\
\hline
$\underline{a}$& 
maximum coefficient of $Q(x)-a_m x^m$		&
(\ref{bars}) \\
 \hline
$C_{Q}$& 
$\ell_1(\reals^{\J})$ norm of eigenvalues of $\L_m|_{\Pi_{\J}}$		&
(\ref{spectral_max}) \\ 
\hline
$\inj$&
injectivity radius	&
Section \ref{background}
\\
\hline
$\Gamma_1,\Gamma_2$&
constants of metric equivalence from $\Exp$	&
(\ref{isometry})
\\
\hline
$c_1,c_2$&
constants of metric equivalence for Sobolev spaces
&
Lemma \ref{Fran}\\
\hline
$\Lambda$&
 constant for zeros lemma for annuli	&
(\ref{zeros_const})	\\ 
\hline
$h_0$ & threshold $h$ level for the zeros lemma & (\ref{common_h_0})
\\
\hline
$H_0$ & threshold $h$ level for results of \ref{ss_simplified} & Lemma \ref{simple_annulus}
\\
\hline
$H_1$ & 
threshold $h$ level for results of \ref{ss_general}
& 
Lemma \ref{general_annulus}
\\
\hline
\end{tabular}
\caption{Constants frequently used in Section \ref{lagrange_function}. The first
four constants are related to the elliptic operator $\L_m = Q(\Delta)$. The final
seven are geometric constants depending on $\M$.}
\end{center}
\end{table}

\subsection{$\L_m$ annihilates $\Pi_\J$}\label{ss_simplified}
We first consider the special case where $k_m$ satisfies
(\ref{reproduction}), with an operator $\L_m = Q(\Delta)$ for which
$\widetilde{k}_m(j) = (Q(j))^{-1}>0$ for $j\notin \J$ and $\L_m\phi_j
= 0$ for $j\in \J$. In other words, $k_m$ is conditionally positive
definite with respect to $\Pi_{\J}$, and $\L_m \Pi_{\J} ={0}$. This
the case for Example \ref{example_rss} for surface splines on even
dimensional spheres.

In this case, the native space seminorm (\ref{NS_norm}) is precisely 
the quadratic form derived from the operator, namely
$$
\ns{ u}_{k_m,\J}^2 
= 
\langle \L_m u, u\rangle_{L_2(\M)}
= 
[u]_{k_m,\M}^2.
$$ 
The more general case is considered in the next section, although the basic 
elements are present here.

We begin by observing that if $\Xi$ is sufficiently dense, with $h\le
\min(h_0, \sqrt{\frac{a_m}{2\Lambda \overline{a}}})$,
then by (\ref{ball_comp_equivalence}) it is possible to estimate the
norm of the Lagrange function by comparing it to a bump $\phi_{\xi}$
with $\phi_{\xi}\circ \Exp_{\xi} (x)= \sigma(|x|/q)$.  We note that
this bump is $1$ at $\xi$ and vanishes on the rest of $\Xi$, thus it
interpolates $\chi_{\xi}$ on $\Xi$ and has a smaller native space
seminorm.
\begin{equation}\label{First_Lagrange_Bound}
\frac{a_m}{4}\|\chi_{\xi}\|_{W_2^m(\M)}^2 
\le 
\ns{\chi_{\xi}}_{k_m,\J}^2
\le 
\ns{\phi_{\xi}}_{k_m,\J}^2
\le 
\overline{a}  \|\phi_{\xi}\|_{W_2^m(\M)}^2 
\le 
C \overline{a} q^{d-2m}.
\end{equation}
The final inequality follows from Lemma \ref{Fran}, and a direct computation of
$\| \sigma(|\cdot|/q)\|_{W_2^{2m}(\reals^d)}.$

The main result, the near-exponential decay of the Lagrange functions,
now is a consequence of an argument developed in \cite{HNW} but given
here in a somewhat different, streamlined form.  First we prove a
lemma showing that a fraction of the seminorm of the Lagrange function
$\chi_{\xi}$ taken over the punctured manifold $\b(\xi,r)^{c}$ resides
in a narrow annular region around the circle $\d(x,\xi) = r$.

\begin{lemma}\label{simple_annulus}
  Suppose $\M$ is a $d$-dimensional compact Riemannian manifold
  satisfying Assumption \ref{Manifold_Assumption}. Suppose further
  that $m>d/2$, $k_m$ satisfies Definition \ref{polyharmonic_kernels}
  and that $\L_m$ annihilates the space $\Pi_{\J}$. Then there is a
  constant $K>0$, depending only on $m$ and $\M$ so that the following
  holds.  If $\Xi$ is sufficiently dense, meaning that
$$
h<H_0:=\min \left(h_0, \sqrt{ \frac{a_m}{2\Lambda\overline{a}}} \right)
$$
and if $\aa = \b(p,r)\setminus \b(p,r-t)$ is an annulus of outer
radius $r<\inj$ and sufficient width $t$, so that 
$4h/h_0\le t$,
then the Lagrange functions for interpolation by $k_m$ satisfy
$$
\|\chi_{\xi}\|_{W_2^m(\b(p,r-t)^c)}^2 \le
K\|\chi_{\xi}\|_{W_2^m(\b(p,r)\setminus \b(p,r-t))}^2.
$$
\end{lemma}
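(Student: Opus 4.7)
My plan is a Caccioppoli-type argument exploiting the variational characterization of $\chi_\xi$ as the minimizer of $\ns{\cdot}_{k_m,\J}$ over interpolants of the Lagrange data on $\Xi$. The decisive simplification in this setting is the annihilation hypothesis $\L_m \Pi_{\J} = \{0\}$, which collapses the native-space seminorm onto the localizable quadratic form
$$\ns{u}^2_{k_m,\J} \;=\; \langle \L_m u, u\rangle_{L_2(\M)} \;=\; [u]^2_{k_m,\M} \;=\; \int_\M \beta(u,u)_x \,\dif\mu(x).$$
It is this localizability that makes the comparison argument work.

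First I would construct a smooth cutoff $\tau:\M\to[0,1]$ with $\tau \equiv 1$ on $\b(p,r-t)$, $\tau \equiv 0$ on $\b(p,r)^{\complement}$, and $|\nabla^k\tau|_{g,x}\le C_k\,t^{-k}$ for $0\le k\le m$, pulled back from a standard Euclidean radial cutoff via $\Exp_p^{-1}$ and exploiting (\ref{isometry}). Set $v:=\tau\chi_\xi$. In the case relevant to the iteration driving Theorem \ref{simple_decay} (where one takes $p=\xi$, so $\xi\in\b(p,r-t)$), one has $v(\xi)=\tau(\xi)\cdot 1 = 1$ and $v(\zeta)=\tau(\zeta)\cdot 0=0$ for every $\zeta\in \Xi\setminus\{\xi\}$, so $v$ interpolates the same Lagrange data. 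The minimization property $[\chi_\xi]_{k_m,\M}^2 \le [v]_{k_m,\M}^2$, after cancelling the common piece on $\b(p,r-t)$ and using $v\equiv 0$ on $\b(p,r)^{\complement}$, produces the Caccioppoli-type inequality
$$\int_{\aa}\beta(\chi_\xi,\chi_\xi)\,\dif\mu + \int_{\b(p,r)^{\complement}}\beta(\chi_\xi,\chi_\xi)\,\dif\mu \;\le\; \int_{\aa}\beta(\tau\chi_\xi,\tau\chi_\xi)\,\dif\mu.$$

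Next I would convert both sides to Sobolev norms. Because $\chi_\xi$ vanishes on $\Xi\cap\aa$ and on $\Xi\cap \b(p,r)^{\complement}$, the density conditions $h<H_0$ and $4h/h_0\le t$ are precisely what one needs to apply (\ref{annulus_comp_equivalence}) and (\ref{ball_comp_equivalence}); together they lower-bound the left-hand side by $\tfrac{a_m}{4}\|\chi_\xi\|^2_{W_2^m(\b(p,r-t)^{\complement})}$. For the right-hand side, (\ref{bilinear_assumption}) gives $\int_\aa\beta(\tau\chi_\xi,\tau\chi_\xi)\,\dif\mu\le \overline{a}\,\|\tau\chi_\xi\|^2_{W_2^m(\aa)}$, and a Leibniz expansion combined with iterating the zeros estimate of Corollary \ref{zl_annulus} yields, for each $j\le m$,
$$\|\nabla^j(\tau\chi_\xi)\|^2_{L_2(\aa)} \;\le\; C\sum_{k=0}^{j} t^{-2k}\,\|\chi_\xi\|^2_{W_2^{j-k}(\aa)} \;\le\; C\,h^{2(m-j)}\sum_{k=0}^{j}\left(\tfrac{h}{t}\right)^{2k}\|\chi_\xi\|^2_{W_2^m(\aa)}.$$

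The main obstacle is the $t^{-2k}$ blow-up coming from derivatives of the cutoff, which would, a priori, spoil any $t$-independent constant $K$. The resolution lives in the density hypothesis: $4h/h_0\le t$ forces $h/t \le h_0/4$, so the geometric sum $\sum_k(h/t)^{2k}$ is bounded by a constant depending only on $h_0$. Since $h\le H_0$ is itself bounded, summing over $j\le m$ delivers $\|\tau\chi_\xi\|^2_{W_2^m(\aa)}\le C\,\|\chi_\xi\|^2_{W_2^m(\aa)}$ with $C$ depending only on $m$ and $\M$. Combining this with the lower bound gives $\tfrac{a_m}{4}\|\chi_\xi\|^2_{W_2^m(\b(p,r-t)^{\complement})} \le \overline{a}\,C\,\|\chi_\xi\|^2_{W_2^m(\aa)}$, which is the assertion with $K=4\overline{a}\,C/a_m$.
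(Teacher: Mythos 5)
Your argument is essentially the paper's own proof: the minimization of the native-space seminorm against the cutoff-multiplied Lagrange function, the cancellation on $\b(p,r-t)$, the lower bound via (\ref{annulus_comp_equivalence}) and (\ref{ball_comp_equivalence}), and the Leibniz-plus-zeros-lemma bound with $h/t\le h_0/4$ are exactly the steps in Lemma~\ref{simple_annulus} together with Lemma~\ref{product_rule}, which you have simply inlined. The only cosmetic differences are that you split the lower bound over $\aa$ and $\b(p,r)^{\complement}$ instead of applying (\ref{ball_comp_equivalence}) directly on $\b(p,r-t)^{\complement}$, and that you make explicit the (implicit) requirement that $\xi$ lie where the cutoff equals one.
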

%
\begin{proof}
Since $\chi_{\xi}$ minimizes the native space seminorm we have
$
[\chi_{\xi}]_{k_m,\M}^2
\le  
[\phi_{\xi} \chi_{\xi}]_{k_m,\M}^2
$ for any function $\phi_{\xi}$ equaling $1$ at $\xi$. If $\phi_{\xi}$
is a $C^{\infty}$ cut-off, equaling $1$ in the ball $\bb=\b(p,r-t)$
and vanishing outside of the ball $\bb\cup \aa$, then
$$
[\chi_{\xi}]_{k_m,\bb}^2 + [\chi_{\xi}]_{k_m,\bb^c}^2
\le  
[ \chi_{\xi}]_{k_m,\bb}^2
+ [\phi_{\xi} \chi_{\xi}]_{k_m,\aa}^2
$$
By (\ref{ball_comp_equivalence}) and (\ref{annulus_comp_equivalence})
$$ 
\frac{a_m}{4}\| \chi_{\xi}\|_{W_2^m(\bb^c)}^2 \le [\chi_{\xi}]_{k_m,\bb^c}^2\le  [\phi_{\xi} \chi_{\xi}]_{k_m,\aa}^2 \le \overline{a} \| \phi_{\xi} \chi_{\xi}\|_{W_2^m(\aa)}^2.
$$
The result follows with $K=4\overline{a} K'/a_m$, where the constant $K'$ is introduced in Lemma \ref{product_rule}, which we prove below.
\end{proof}
\begin{lemma}\label{product_rule}
Assume the manifold $\M$, the kernel $k_m$,  the set of centers $\Xi$ and the annulus $\aa\subset \M$ satisfy the conditions of Lemma \ref{simple_annulus}.
If $\phi_{\xi} $ is a smooth ``bump'' function, satisfying 
\begin{equation}\label{E:cutoff}
\phi_{\xi}\circ \mathrm{Exp}_{\xi} (x)
= 
\sigma\left(\frac{1}{t}\d(\Exp_{\xi}(x),\Exp_{\xi}(0)) -\frac{2t-r}{t}\right) 
= 
\sigma\left(\frac{|x |}{t}+\frac{2t-r}{t} \right)
\end{equation}
with $\sigma:\reals_+\to\reals_+$ a $C^{\infty}$, non-increasing cutoff function equaling $1$ on $[0,1]$and
$0$ on $[2,\infty)$,
then
$$\| \phi_{\xi} \chi_{\xi}\|_{W_2^m(\aa)}\le K' \|\chi_{\xi}\|_{W_2^m(\aa)}$$
where $K'$ depends only on $\M$, $m$ and the choice of cutoff $\sigma$.
\end{lemma}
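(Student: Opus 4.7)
The plan is to transfer the problem to a Euclidean patch via the exponential map at $\xi$, use the Leibniz rule to expand derivatives of $\phi_\xi\chi_\xi$, and absorb the negative powers of $t$ that come from differentiating the cutoff by invoking a zeros estimate for $\chi_\xi$.

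First, by Lemma~\ref{Fran} it suffices to prove the corresponding Euclidean bound for $\tilde\phi\tilde\chi := (\phi_\xi\chi_\xi)\circ\Exp_\xi$ on the Euclidean set $\Omega := \Exp_\xi^{-1}(\aa)$, with equivalence constants depending only on $\M$ and $m$. Since $\tilde\phi(x) = \sigma(|x|/t + c)$ for the fixed profile $\sigma$ and a scalar $c$ depending only on $r$ and $t$, a chain-rule computation gives the pointwise bound
$$
|D^\beta \tilde\phi(x)| \;\le\; C_\beta\,t^{-|\beta|}\qquad\text{on }\Omega,
$$
with $C_\beta$ depending only on $\sigma$ and $|\beta|$. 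I would then apply the Leibniz rule to obtain, for every multi-index $\alpha$ with $|\alpha|\le m$,
$$
\|D^\alpha(\tilde\phi\tilde\chi)\|_{L_2(\Omega)}
\;\le\;
C_m \sum_{\beta\le\alpha} t^{-|\beta|}\,\|D^{\alpha-\beta}\tilde\chi\|_{L_2(\Omega)}.
$$

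The right-hand side will be controlled by a zeros estimate applied to $\tilde\chi$. The function $\tilde\chi$ vanishes on $\Exp_\xi^{-1}(\Xi\setminus\{\xi\})\cap\Omega$, a set whose fill distance in $\Omega$ is comparable to $h$ by the metric equivalence (\ref{isometry}). The hypotheses $h<H_0$ and $4h/h_0\le t$ guarantee that this fill distance meets the quantitative requirements of the Euclidean zeros lemma of Section~\ref{lipschitz_domains}, which in its iterated form gives
$$
\|D^\gamma\tilde\chi\|_{L_2(\Omega)}
\;\le\;
C\, h^{m-|\gamma|}\,\|\tilde\chi\|_{W_2^m(\Omega)},\qquad 0\le |\gamma|\le m.
$$
Substituting with $\gamma = \alpha-\beta$ and using $|\beta|+|\alpha-\beta|=|\alpha|\le m$ (so $m-|\alpha-\beta|\ge|\beta|$), each term on the right of the Leibniz bound is dominated by
$$
t^{-|\beta|}\,h^{m-|\alpha-\beta|}\,\|\tilde\chi\|_{W_2^m(\Omega)}
\;=\; (h/t)^{|\beta|}\,h^{m-|\alpha|}\,\|\tilde\chi\|_{W_2^m(\Omega)}
\;\le\; C\,\|\tilde\chi\|_{W_2^m(\Omega)},
$$
with $C$ depending only on $\M$ and $m$, since both $h/t \le h_0/4$ and $h \le H_0$ are bounded by constants determined by $\M$ and $m$. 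Summing over $|\alpha|\le m$ and $\beta\le\alpha$ and returning to $\aa$ via Lemma~\ref{Fran} will yield the inequality with $K'$ depending only on $\M$, $m$, and $\sigma$.

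The hard part will be the zeros estimate in its iterated form above, rather than just the first-order version $\|\tilde\chi\|_{W_2^{m-1}}\le Ch\,\|\tilde\chi\|_{W_2^m}$: the first-order bound alone cannot neutralize $t^{-|\beta|}$ when $|\beta|\ge 2$, because one only recovers a single factor of $h/t$ per term. The iterated form, however, follows from a standard Bramble--Hilbert argument on a covering of $\Omega$ by balls of radius $\sim h$, each containing an interpolation point of $\Xi$ -- precisely the reasoning underlying the zeros lemmas of Section~\ref{lipschitz_domains}.
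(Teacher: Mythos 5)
Your proposal is correct and follows essentially the same route as the paper: transfer to Euclidean coordinates via Lemma~\ref{Fran}, apply the Leibniz rule to pick up factors $t^{-|\beta|}$ from the cutoff, and absorb them using the zeros lemma together with $h/t\le h_0/4$. The ``iterated'' zeros estimate you flag as the hard part is exactly what Corollary~\ref{zl_annulus} already supplies ($\|u\|_{W_2^k(\aa)}\le \Lambda h^{m-k}|u|_{W_2^m(\aa)}$ for all $k\le m$), so no additional work is needed there.
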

%
\begin{proof}
We follow the proof of \cite[Lemma 4.3]{HNW}.
Let $ \widetilde{\chi_{\xi}}(x)  = \chi_{\xi} \circ \Exp_{\xi}$.
By using the metric equivalence guaranteed by Lemma \ref{Fran}, 
we can estimate $\| \phi_{\xi} \chi_{\xi}\|_{W_2^m(\aa)}^2$ by
\begin{eqnarray*}
\| \phi_{\xi} \chi_{\xi}\|_{W_2^m(\aa)}^2&\le& 
c_2^2 
\int_{\reals^d} 
\sum_{|\alpha|\le m} 
   \left|D^{\alpha} 
       \left[ \sigma\left(\frac{|x |}{t}+\frac{2t-r}{t} \right) \widetilde{\chi_{\xi}}(x)\right]
     \right|^2\dif x\nonumber\\
&\le& 
c_2^2C \sum_{j=0}^{m} t^{2(j-m)}
\int_{B(0,r)\setminus B(0,r-t)}
  \sum_{|\alpha|=j} 
  \left|D^{\alpha} 
       \widetilde{\chi_{\xi}}(x)
     \right|^2\dif x\nonumber\\
&\le& \left(\frac{c_2}{c_1}\right)^2C \sum_{j=0}^{m} t^{2(j-m)} \|\chi_{\xi}\|_{W_2^j(\aa)}^2
\le 
C  \left(\frac{c_2}{c_1}\right)^2 \Lambda \sum_{j=0}^m \left(\frac{h}{t}\right)^{2(m-j)} \|\chi_{\xi}\|_{W_2^m(\aa)}^2.\nonumber
\end{eqnarray*}
and $K' = C  (\frac{c_2}{c_1})^2 \Lambda \sum_{j=0}^m (h_0/4)^{2(m-j)} $. The second inequality follows
from the product rule, and $C$ is a constant depending only on $m$, $d$ and $\sigma$. The third inequality
is Lemma \ref{Fran} again, and the final inequality is the zeros lemma for annuli, Corollary \ref{zl_annulus}.
\end{proof}

At this point, we can follow the example of \cite[Section 4]{HNW}
\begin{theorem} \label{simple_decay} Suppose that $\M$ is a compact
  $d$-dimensional Riemannian manifold satisfying Assumption
  \ref{Manifold_Assumption}.  Suppose further that $m>d/2$ and that
  $k_m$ satisfies Definition \ref{polyharmonic_kernels} and that
  $\L_m$ annihilates the space $\Pi_{\J}$. There exist positive
  constants $h_0$, $\nu$ and $C$, depending only on $m$, $\M$ and the
  operator $\L_m$ so that if the set of centers $\Xi$ is quasiuniform
  with mesh ratio $\rho$ and has density $h\le H_0$ then the Lagrange
  functions for interpolation by $k_m$ satisfy
\begin{equation}\label{first_pointwise_lagrange_bound}
  |\chi_{\xi}(x)| \le C \rho^{m-d/2} 
  \exp\left(- \frac{\nu}{h} \min\bigl(d(x,\xi),\inj\bigr)\right).
 \end{equation}
 Furthermore, for any $0<\epsilon\le 1$, there is a constant $C$
 depending only on $m, \M, \rho$ and $\epsilon$, so that the Lagrange
 functions satisfy
\begin{equation}\label{first_holder_bound}
 |\chi_{\xi}(x)- \chi_{\xi}(y)| \le C\left(\frac{d(x,y)}{q}\right)^{\epsilon}.
 \end{equation}
\end{theorem}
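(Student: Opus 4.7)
The argument follows the template of \cite[Section 4]{HNW}, with Lemma~\ref{simple_annulus} as the central one-step contraction. I would first iterate that lemma to establish exponential decay of the $W_2^m$ energy of $\chi_\xi$ on complements of geodesic balls about $\xi$. Fix the annular width $t = 4h/h_0$ so that Lemma~\ref{simple_annulus} applies to every $\aa = \b(\xi,r)\setminus\b(\xi,r-t)$ with $r\le\inj$. Decomposing $\b(\xi,r-t)^c = \aa \cup \b(\xi,r)^c$ (disjointly), Lemma~\ref{simple_annulus} rearranges to the contraction
\begin{equation*}
\|\chi_\xi\|_{W_2^m(\b(\xi,r)^c)}^2 \le \mu\,\|\chi_\xi\|_{W_2^m(\b(\xi,r-t)^c)}^2,\qquad \mu := \frac{K-1}{K}\in(0,1).
\end{equation*}
Iterating from a starting radius $r_0 \asymp h$, and using (\ref{First_Lagrange_Bound}) to control the initial energy, I would obtain, for every $R\le\inj$,
\begin{equation*}
\|\chi_\xi\|_{W_2^m(\b(\xi,R)^c)}^2 \le C\,q^{d-2m}\exp(-\nu R/h),\qquad \nu := -\tfrac{h_0}{4}\log\mu > 0.
\end{equation*}

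Next I would convert this energy decay into the pointwise bound. Set $R := \min(d(x,\xi),\inj)$ and choose $\delta\asymp h$ so that, whenever $R > 2\delta$, the ball $\b(x,\delta)$ lies in $\b(\xi,R-\delta)^c$; by quasi-uniformity $\b(x,\delta)$ also meets $\Xi\setminus\{\xi\}$, on which $\chi_\xi$ vanishes. Transporting to Euclidean space via $\Exp_x$ and invoking Lemma~\ref{Fran}, the Sobolev embedding $W_2^m\hookrightarrow L_\infty$ (valid since $m>d/2$) combined with the zeros lemma Theorem~\ref{omega_manf_continuous_est} applied at scale $h$ gives
\begin{equation*}
|\chi_\xi(x)| \le C\,h^{m-d/2}\|\chi_\xi\|_{W_2^m(\b(x,\delta))}.
\end{equation*}
Feeding in the energy decay and writing $q=h/\rho$ produces
\begin{equation*}
|\chi_\xi(x)| \le C\,h^{m-d/2}q^{d/2-m}\exp(-\tfrac{\nu}{2}R/h) = C\,\rho^{m-d/2}\exp(-\tfrac{\nu}{2}R/h),
\end{equation*}
which is (\ref{first_pointwise_lagrange_bound}) after relabeling $\nu/2$ as $\nu$. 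For the short-range case $R\le 2\delta$, the same Sobolev/zeros argument with $R=0$ yields a uniform bound $\|\chi_\xi\|_\infty \le C\rho^{m-d/2}$ that already dominates the claimed right-hand side on this bounded range.

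For the H\"older estimate (\ref{first_holder_bound}), I would combine the uniform $L_\infty$ bound just obtained with a local regularity argument at scale $q$. When $d(x,y)\ge q$ the bound is immediate. When $d(x,y) < q$, a Sobolev embedding into $C^{0,\epsilon}$ (supplemented, for $\epsilon$ close to $1$, by a Bernstein-type inverse inequality on $V_\Xi$) rescaled to a ball of radius $\asymp q$ containing both $x$ and $y$ yields $|\chi_\xi(x)-\chi_\xi(y)|\le C\,q^{-\epsilon}d(x,y)^\epsilon$; the $W_2^m$ content on that rescaled ball is controlled by $\|\chi_\xi\|_{W_2^m(\M)}\le C q^{d/2-m}$, and the powers of $q$ combine cleanly with $\rho$ to give the stated constant.

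The principal technical difficulty is the scaling balance in the pointwise step: to turn a $W_2^m$-norm estimate with prefactor $q^{d/2-m}$ into an $L_\infty$ bound with prefactor $\rho^{m-d/2} = (h/q)^{m-d/2}$ one must extract a matching $h^{m-d/2}$ from the Sobolev embedding. This forces localization at the scale $h$ and use of the zeros lemma at that same scale, which in turn relies on the quasi-uniformity of $\Xi$ to guarantee that a ball of radius $\asymp h$ about a generic $x$ contains enough zeros of $\chi_\xi$ to activate the zeros lemma.
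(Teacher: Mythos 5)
Your strategy is the same as the paper's: iterate Lemma~\ref{simple_annulus} with $t\asymp h$ to get geometric (hence exponential-in-$r/h$) decay of $\|\chi_\xi\|_{W_2^m(\b(\xi,r)^c)}^2$, seed the iteration with~(\ref{First_Lagrange_Bound}), then convert energy decay to a pointwise bound by applying the zeros lemma at scale~$h$ and absorbing $h^{m-d/2}q^{d/2-m}=\rho^{m-d/2}$, and finally appeal to a H\"older estimate at scale~$q$ for~(\ref{first_holder_bound}). That is exactly the paper's argument, so the proposal is essentially correct.

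The one place you deviate is the choice of domain in the pointwise step, and it introduces a small gap. You apply the zeros lemma to $\b(x,\delta)$ and then argue separately for $d(x,\xi)\le 2\delta$, where you claim ``the same argument with $R=0$'' gives $\|\chi_\xi\|_\infty\le C\rho^{m-d/2}$. But in that regime the ball $\b(x,\delta)$ can contain $\xi$, where $\chi_\xi=1$, so $\chi_\xi$ does not vanish on $\Xi\cap\b(x,\delta)$ and the zeros lemma cannot be invoked on that domain. The global Sobolev embedding alone only gives $\|\chi_\xi\|_\infty\lesssim \|\chi_\xi\|_{W_2^m(\M)}\lesssim q^{d/2-m}$, which is much weaker than $\rho^{m-d/2}$. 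The paper sidesteps this by applying Theorem~\ref{omega_manf_continuous_est} to the lune $\b(x,t)\cap\b(\xi,d(x,\xi))^c$, which excludes $\xi$ for \emph{every} $x\neq\xi$ and is a Lipschitz domain with cone parameters $\asymp t$ uniformly in $d(x,\xi)$; this yields $|\chi_\xi(x)|\le C h^{m-d/2}\|\chi_\xi\|_{W_2^m(\b(\xi,d(x,\xi))^c)}$ in one stroke, with no case split. Replacing your ball by this intersection (or, say, by $\b(x,\delta)\setminus\b(\xi,q/2)$) closes the gap and recovers the paper's proof.

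Two smaller remarks. First, the exponent in your contraction should be $\nu:=-\gamma\log\mu$ with $\gamma=h_0/4$ (the reciprocal of $t/h$), not $\nu=-(h_0/4)\log\mu$; you wrote the correct scaling $\nu\,r/h$ in the decay estimate, so this is only a labeling slip. Second, the H\"older bound in the paper is a one-line invocation of Corollary~\ref{he_balls}; a Bernstein-type inverse inequality on $V_\Xi$, as you propose for $\epsilon$ near $1$, is not needed, though the range of admissible $\epsilon$ is implicitly constrained by $m>d/2+\epsilon$.
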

\begin{proof}
  Set $t =4h/ h_0=: \gamma h$, and note that for $t\le r\le \inj$, Lemma
  \ref{simple_annulus} implies that
$$
\| \chi_{\xi}\|_{W_2^m(\b(\xi,r)^{c})}^2 \le 
\epsilon \| \chi_{\xi}\|_{W_2^m(\b(\xi,r-t)^{c})}^2, 
$$
with $\epsilon = (K-1)/K$. Letting $n := \lfloor r/t\rfloor$, we have
\begin{eqnarray*}
\| \chi_{\xi}\|_{W_2^m(\b(\xi,r)^{c})}^2 
&\le &
\epsilon^n \| \chi_{\xi}\|_{W_2^m(\M)}^2\\
&\le& 
\epsilon^{-1} e^{(\log \epsilon) r/t}\| \chi_{\xi}\|_{W_2^m(\M)}^2
\le 
\epsilon^{-1} e^{- \nu r/h}\| \chi_{\xi}\|_{W_2^m(\M)}^2\\
&\le& 
C e^{- \nu r/h}  q^{2m-d}
\end{eqnarray*}
with $\nu := - \gamma \log \epsilon$. Since $\epsilon = \frac{K}{K+1}<1$, 
it follows that $\nu>0$. 
The final inequality follows from (\ref{First_Lagrange_Bound}).

The bound (\ref{first_pointwise_lagrange_bound}) follows from the
observation that $\chi_{\xi}(x)$ can be estimated using
Theorem~\ref{omega_manf_continuous_est}. The intersection $\b(x,t)\cap
\b(\xi,\d(x,\xi))^{c}$ is contained the ball $\b(x,R)$, since
$t<R<\frac12 \inj$. Because geodesic spheres are smooth hypersurfaces
whose intersection is nontangential, The intersection $\b(x,t)\cap
\b(\xi,\d(x,\xi))^{c}$ is a Lipschitz domain contained in
$\b(x,R)$. Moreover, $h\le \gamma t$. Thus,
Theorem~\ref{omega_manf_continuous_est} applies, giving us
$$
|\chi_{\xi}(x)|
\le
C h^{m-d/2} \|\chi_{\xi}\|_{W_2^m(\b(x,t)\cap \b(\xi,\d(x,\xi))^{c})}
\le
C h^{m-d/2}\|\chi_{\xi}\|_{W_2^m\bigl(\b\bigl(\xi,d(x,\xi)\bigr)^c\bigr)}
$$
for $h<\gamma t$. Similarly, the estimate in
(\ref{first_holder_bound}) follows from Corollary \ref{he_balls}.
\end{proof}
\subsection{General Case}\label{ss_general}
In this case, the native space seminorm (\ref{NS_norm}) and the
quadratic form induced by the operator, differ by some low order
terms:
$$
\ns{ u}_{k_m,\J}^2 = \langle \L_m u, u\rangle_{L_2(\M)} -
\sum_{j\in\J} Q(\lambda_j) |\langle u, \varphi_j\rangle|^2 =
[u]_{k_m,\M}^2- \sum_{j\in\J} Q(\lambda_j) |\langle u,
\varphi_j\rangle|^2.
$$
Because of the orthonormality of $\varphi_j$, we have 
$ |\langle u, \varphi_j\rangle|^2\le \|u\|_{L_2(\M)}^2$.
Setting 
\begin{equation}\label{spectral_max}
C_Q := \sum_{j\in\J} |Q(\lambda_j)|
\end{equation}
(this is the $\ell_1(\reals^{\J})$ norm of the spectrum of the operator $\L_m$ restricted
to $\Pi_{\J}$) we note that, by Corollary~\ref{omega_full_manf_continuous_est}, if
$u$ vanishes on a sufficiently dense set, then the lower order terms 
are controlled
$$ \sum_{j\in\J}
|Q(\lambda_j)| |\langle u, \varphi_j\rangle|^2 \le C_Q h^{2m}
\|u\|_{W_2^m(\M)}^2.$$ Indeed it follows that $ \frac{a_m}{8}
\|u\|_{W_2^m}^2\le \ns{u}_{k_m,\J}^2$ when $h$ is chosen small
enough that $C_Q h^{2m}\le \frac{a_m}{8}$.

This allows us to provide a basic estimate for the Lagrange function,
similar to (\ref{First_Lagrange_Bound}). In this case
\begin{equation}\label{General_Lagrange_Bound}
\frac{a_m}{8}\|\chi_{\xi}\|_{W_2^m(\M)}^2
\le 
\ns{\chi_{\xi}}_{k_m,\J}^2
\le 
\ns{\phi_{\xi}}_{k_m,\J}^2
\le 
(\overline{a} + C_Q h^{2m})\|\phi_{\xi}\|_{W_2^m(\M)} 
\le 
C \overline{a} q^{d-2m}.
\end{equation}

\begin{lemma}\label{general_annulus}
  Suppose $\M$ is a $d$-dimensional compact Riemmanian manifold
  satisfying Assumption \ref{Manifold_Assumption}. Suppose further
  that $m>d/2$ and that $k_m$ satisfies Definition
  \ref{polyharmonic_kernels}. Then there is a constant $K>0$,
  depending only on $m$ and $\M$ so that the following holds.  If
  $\Xi$ is sufficiently dense, meaning that
$$
h< H_1 :=\min \left(H_0,  \sqrt[2m]{\frac{a_m}{8C_Q}}\right) =
\min \left(h_0, \sqrt{\frac{a_m}{2\Lambda \overline{a}} },
  \sqrt[2m]{\frac{a_m}{8C_Q}} \right)
$$
and if $\aa = \b(p,r)\setminus \b(p,r-t)$ is an annulus of outer
radius $r<\inj$, satisfying, in addition,
$$
\|\chi_{\xi}\|_{W_2^m(\b(p,r)^c)} \ge C_0 h^{2m} \|\chi_{\xi}\|_{W_2^m(\M)}
$$
(for a constant $C_0$ depending only on m, $\M$, $k_m$, and $\J$ which
we define in the proof) and sufficient width $t$, so that $h\le \gamma
t$, then the Lagrange functions for interpolation by $k_m$ with
auxiliary space $\Pi_{\J}$ satisfy
$$
\|\chi_{\xi}\|_{W_2^m(\b(p,r-t)^c)}^2 \le
K\|\chi_{\xi}\|_{W_2^m(\b(p,r)\setminus \b(p,r-t))}^2.
$$
\end{lemma}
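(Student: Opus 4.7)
The plan is to mirror the proof of Lemma~\ref{simple_annulus}, but now to track the additional perturbation $E(u):=\sum_{j\in\J}Q(\lambda_j)|\langle u,\varphi_j\rangle|^2$ that separates $\ns{u}_{k_m,\J}^2=[u]_{k_m,\M}^2-E(u)$ from the quadratic form. The auxiliary hypothesis will be used precisely to trade an $h^{2m}\|\chi_\xi\|_{W_2^m(\M)}$ factor arising in $E(\chi_\xi)-E(\phi_\xi\chi_\xi)$ for a constant multiple of $\|\chi_\xi\|_{W_2^m(\bb^c)}$, which can then be absorbed on the left-hand side.

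Writing $\bb=\b(p,r-t)$ and $B=\b(p,r)^c$, so that $\bb^c=\aa\cup B$, choose the cutoff $\phi_\xi$ as in~(\ref{E:cutoff}): $\phi_\xi\equiv 1$ on $\bb$ and $\phi_\xi\equiv 0$ on $B$. Because $\xi\in\bb$, $\phi_\xi\chi_\xi$ interpolates the Lagrange data on $\Xi$, so minimality gives $\ns{\chi_\xi}_{k_m,\J}^2\le\ns{\phi_\xi\chi_\xi}_{k_m,\J}^2$. Splitting $[\cdot]_{k_m}^2$ over $\bb\cup\aa\cup B$ and using $\phi_\xi\chi_\xi=\chi_\xi$ on $\bb$ and $\phi_\xi\chi_\xi=0$ on $B$ rearranges this to $[\chi_\xi]_{k_m,\bb^c}^2\le[\phi_\xi\chi_\xi]_{k_m,\aa}^2+E(\chi_\xi)-E(\phi_\xi\chi_\xi)$. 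Applying (\ref{ball_comp_equivalence}) on the left and (\ref{annulus_comp_equivalence}) together with Lemma~\ref{product_rule} on the right produces
\[
\tfrac{a_m}{4}\|\chi_\xi\|_{W_2^m(\bb^c)}^2\le\overline{a}(K')^2\|\chi_\xi\|_{W_2^m(\aa)}^2+\bigl|E(\chi_\xi)-E(\phi_\xi\chi_\xi)\bigr|.
\]

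The hard part is to bound the perturbation $|E(\chi_\xi)-E(\phi_\xi\chi_\xi)|$ by a small multiple of $\|\chi_\xi\|_{W_2^m(\bb^c)}^2$. Write $\phi_\xi\chi_\xi=\chi_\xi+w$ with $w=(\phi_\xi-1)\chi_\xi$; the key observation is that $w$ vanishes on \emph{all} of $\Xi$ (at $\xi$ because $\phi_\xi(\xi)-1=0$, at every other $\zeta$ because $\chi_\xi(\zeta)=0$). Setting $\alpha_j=\langle\chi_\xi,\varphi_j\rangle$ and $\beta_j=\langle w,\varphi_j\rangle$, the perturbation equals $2\sum_{j\in\J}Q(\lambda_j)\mathrm{Re}(\alpha_j\overline{\beta_j})+\sum_{j\in\J}Q(\lambda_j)|\beta_j|^2$. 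Corollary~\ref{omega_full_manf_continuous_est} applied to $w$, combined with a product-rule bound for $w$ on $\aa$ analogous to Lemma~\ref{product_rule} and the identity $w=-\chi_\xi$ on $B$, gives $\sum_{j\in\J}|Q(\lambda_j)||\beta_j|^2\le C_Q\|w\|_{L_2(\M)}^2\le Ch^{2m}\|\chi_\xi\|_{W_2^m(\bb^c)}^2$. Since $\chi_\xi$ itself vanishes on $\Xi\setminus\{\xi\}$, whose mesh norm is bounded by a constant multiple of $h$, the same corollary yields $\sum_{j\in\J}|Q(\lambda_j)||\alpha_j|^2\le Ch^{2m}\|\chi_\xi\|_{W_2^m(\M)}^2$, and Cauchy--Schwarz then controls the cross term by $Ch^{2m}\|\chi_\xi\|_{W_2^m(\M)}\|\chi_\xi\|_{W_2^m(\bb^c)}$.

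The hypothesis now enters: since $B\subset\bb^c$ it gives $h^{2m}\|\chi_\xi\|_{W_2^m(\M)}\le C_0^{-1}\|\chi_\xi\|_{W_2^m(B)}\le C_0^{-1}\|\chi_\xi\|_{W_2^m(\bb^c)}$, so the cross term is at most $(C/C_0)\|\chi_\xi\|_{W_2^m(\bb^c)}^2$. I then \emph{define} $C_0$ large enough (depending only on $a_m$, $\overline{a}$, $C_Q$, $K'$, and the zeros-lemma constants) that this contribution, together with the $Ch^{2m}$ diagonal term (made small by $h<H_1$), is bounded by $\tfrac{a_m}{8}\|\chi_\xi\|_{W_2^m(\bb^c)}^2$. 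Absorbing this on the left leaves $\tfrac{a_m}{8}\|\chi_\xi\|_{W_2^m(\bb^c)}^2\le\overline{a}(K')^2\|\chi_\xi\|_{W_2^m(\aa)}^2$, which is the claim with $K=8\overline{a}(K')^2/a_m$.
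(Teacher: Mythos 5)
Your proof is correct and follows essentially the same route as the paper's: the same minimality inequality, the same cutoff $\phi_{\xi}$ on $\aa$, and the same algebraic identity for the $\J$-perturbation (you write $\phi_{\xi}\chi_{\xi}=\chi_{\xi}+w$ and expand $|\alpha_j+\beta_j|^2-|\alpha_j|^2$, whereas the paper writes $|A|^2-|B|^2=\Re[(A-B)(\bar A+\bar B)]$ and bounds the two factors, which produces exactly the same cross and diagonal pieces, both then absorbed via the auxiliary hypothesis and the choice of $C_0$). Your bookkeeping is in fact slightly more careful than the paper's at two places: you keep the $(K')^2$ where the paper's final display silently drops a square, and you explicitly justify applying the zeros estimate to $\chi_{\xi}$ by observing that it vanishes on $\Xi\setminus\{\xi\}$, which still has mesh norm comparable to $h$.
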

\begin{proof}
Since $\chi_{\xi}$ minimizes the native space seminorm, we have
$$
[\chi_{\xi}]_{k_m,\M}^2 \le [\phi_{\xi} \chi_{\xi}]_{k_m,\M}^2 - \sum
Q(\lambda_j) \left( \left|\int_{\M}\phi_{\xi}(x) \chi_{\xi}
    (x)\overline{\varphi_{j}(x)}\dif \mu(x)\right|^2 - \left|
    \int_{\M}\chi_{\xi} (x)\overline{\varphi_{j}(x)}\dif
    \mu(x)\right|^2 \right)
$$ 
for a cut-off $\phi_{\xi}$ equaling $1$ in the ball $\bb=\b(p,r-t)$
and vanishing outside of the ball $\bb(p,r)=\bb\cup \aa$. Using the
sum of squares factorization $|A|^2 - |B|^2 =
\Re\bigl[(A-B)(\overline{A}+\overline{B})\bigr]$, we may write
\begin{eqnarray*}
 &\mbox{}& \left|\int_{\M} \phi_{\xi}(x) \chi_{\xi}(x) \overline{\varphi_j(x)}
    \dif \mu (x) \right|^2 -
  \left|\int_{\M} \chi_{\xi}(x) \overline{\varphi_j(x)} \dif \mu (x) \right|^2 \\
   &\mbox{}& \quad = \Re \left[ \left(\int_{\M} \bigl(\phi_{\xi}(x)-1\bigr)
      \chi_{\xi}(x) \overline{\varphi_j(x)} \dif \mu (x)\right) \times
    \left({\int_{\M}\bigl(\phi_{\xi}(x)+1\bigr)
      }\chi_{\xi}(x)\varphi_j(x) \dif \mu (x)\right)
  \right]\\
 &\mbox{}&\quad  = \Re \left[ \left( \int_{\aa} \bigl(\phi_{\xi}(x)\bigr)
      \chi_{\xi}(x) \overline{\varphi_j(x)} \dif \mu (x) -
      \int_{\bb^c} \chi_{\xi}(x) \overline{\varphi_j(x)} \dif \mu (x)\right)\right.\\
 &\mbox{}& \quad \quad    \times\left. \left({\int_{\M}\bigl(\phi_{\xi}(x)+1\bigr)
      }\chi_{\xi}(x)\varphi_j(x) \dif \mu (x)\right) \right].
\end{eqnarray*}
The second factor can be bounded by using
Corollary~\ref{omega_full_manf_continuous_est}, along with the cutoff
function $\phi_{\xi}$ being bounded by $1$ and $\|\varphi_j\|_2 =1$:
$$
\left|{\int_{\M}\bigl(\phi_{\xi}(x)+1\bigr) }\chi_{\xi}(x)\varphi_j(x)
  \dif \mu (x)\right| \le 2\|\chi_{\xi}\|_{L_2(\M)} \le 2\Lambda
h^m \|\chi_{\xi}\|_{W_2^{m}(\M)}
$$
To bound the first factor, start by using Corollary~\ref{zl_annulus}
and Lemma~\ref{product_rule} to obtain
\begin{eqnarray*}
\left|\int_{\aa} \bigl(\phi_{\xi}(x)\bigr) \chi_{\xi}(x)  \overline{\varphi_j(x)} \dif \mu (x)\right| 
  &\le& \left(\int_{\aa} |\phi_{\xi} \chi_{\xi}|^2\dif   \mu(x)\right)^{1/2}\\
  & \le& \Lambda h^m
       \|\phi_{\xi} \chi_{\xi}\|_{W_2^m(\aa)} 
\le \Lambda K' h^m \|
\chi_{\xi}\|_{W_2^m(\aa)}.
\end{eqnarray*}
Next, from Corollary~\ref{zl_com} we have that
\[
\left|\int_{\bb^c} \chi_{\xi}(x) \overline{\varphi_j(x)} \dif \mu
  (x)\right|\le \left(\int_{\bb^c} |\chi_{\xi}|^2\dif
      \mu(x)\right)^{1/2}\le
\Lambda h^m \|\chi_{\xi}\|_{W_2^m(\bb^c)}.
\]
So the first factor is bounded by 
\[
\Lambda K' h^m \|
\chi_{\xi}\|_{W_2^m(\aa)}+\Lambda h^m
\|\chi_{\xi}\|_{W_2^m(\bb^c)}\le\Lambda ( K'+1)h^m
\|\chi_{\xi}\|_{W_2^m(\bb^c)}, 
\]
and the product itself is bounded by 
\[
C'h^{2m} \|\chi_{\xi}\|_{W_2^m(\bb^c)} \|\chi_{\xi}\|_{W_2^{m}(\M)}, \
\text{where }C'=2\Lambda^2(K'+1).
\]
Putting these bounds together gives us
\begin{equation*}
\begin{split}
  \sum |Q(\lambda_j)|& \left| \left|\int_{\M}\phi_{\xi}(x) \chi_{\xi}
      (x) \overline{\varphi_{j}(x)}\dif \mu(x)\right|^2 - \left|
      \int_{\M}\chi_{\xi} (x)\overline{\varphi_{j}(x)}\dif
      \mu(x)\right|^2 \right| \\
  &\le C'C_Q h^{2m}
  \|\chi_{\xi}\|_{W_2^m(\M)} \|\chi_{\xi}\|_{W_2^m(\bb^c)}.
\end{split}
\end{equation*}

Thus for $h$ sufficiently small, say for $ C' C_Qh^{2m}
\|\chi_{\xi}\|_{W_2^m(\M)} \le \frac{a_m}{8}
\|\chi_{\xi}\|_{W_2^m(\bb^c)} $, which follows by taking
$$C_0 := \frac{8C' C_Q}{a_m},$$
we have
\begin{equation}\label{Qpart}
\sum |Q(\lambda_j)|
  \left|
    \left|\int_{\M}\phi_{\xi}(x) \chi_{\xi} (x)\overline{\varphi_{j}(x)}\dif \mu(x)\right|^2
    -
    \left| \int_{\M}\chi_{\xi} (x)\overline{\varphi_{j}(x)}\dif \mu(x)\right|^2
  \right|
\le \frac{a_m}{8}  \|\chi_{\xi}\|_{W_2^m(\bb^c)}^2.\end{equation}
We note from (\ref{ball_comp_equivalence}) that 
\begin{equation}
\label{tobesubtracted} 
  \frac{a_m}{4} \|\chi_{\xi}\|_{W_2^m( \bb^c) }^2 
\le   [\chi_{\xi}]_{k_m,W_2^m( \bb^c)}^2
\end{equation}
 and by subtracting right and left sides of (\ref{Qpart}) from the left and right sides of (\ref{tobesubtracted})  the lemma follows
since then 
\begin{multline*}
\frac{a_m}{8}\|\chi_{\xi}\|_{W_2^m( \bb^c)}^2 \\
\le
[\chi_{\xi}]_{k_m, \bb^c}^2
-
\left|
 \sum Q(\lambda_j)
   \left(
     \left|\int_{\M}\phi_{\xi}(x) \chi_{\xi} (x)\overline{\varphi_{j}(x)}\dif \mu(x)\right|^2
     -
     \left| \int_{\M}\chi_{\xi} (x)\overline{\varphi_{j}(x)}\dif \mu(x)\right|^2
   \right)
\right|\\
\le
[\phi_{\xi} \chi_{\xi}]_{k_m,\bb^c   }^2 
= [\phi_{\xi} \chi_{\xi}]_{k_m,\aa   }^2 
\le 
\overline{a}K'\|\chi_{\xi}\|_{W_2^m(\aa)}^2
\end{multline*}
where the last inequality follows from (\ref{annulus_comp_equivalence}). The result follows with $K = 8 \overline{a} K' /a_m$.
\end{proof}
%
%
We are now ready for the full result.
%
%
\begin{theorem} \label{general_decay}
Suppose that $\M$ is a compact $d$-dimensional Riemannian manifold satisfying Assumption \ref{Manifold_Assumption}. 
Suppose further that $m>d/2$ and that $k_m$ satisfies Definition \ref{polyharmonic_kernels}. 
There exist positive constants
$h_0$, $\nu$ and $C$, depending only on $m$, $\M$ and the operator $\L_m$ 
so that if the set of centers $\Xi$  is
quasiuniform with mesh ratio $\rho$ and has density $h\le H_1$
then the Lagrange functions for interpolation by $k_m$ with auxiliary space $\Pi_{\J}$ satisfy
\begin{equation}\label{general_pointwise_lagrange_bound}
 |\chi_{\xi}(x)| \le C \rho^{m-d/2} \max\left(\exp\left(- \frac{\nu}{h} \min\bigl(d(x,\xi),\inj\bigr)\right), h^{2m}\right).
 \end{equation}
Furthermore, for any $0<\epsilon\le 1$ for which $m>d/2$, there is a constant $C$ depending only on $m, \M, \rho$ and $\epsilon$, so that the  Lagrange functions satisfy
\begin{equation}\label{general_holder_bound}
 |\chi_{\xi}(x)- \chi_{\xi}(y)| \le C\left(\frac{d(x,y)}{q}\right)^{\epsilon}.
 \end{equation}
\end{theorem}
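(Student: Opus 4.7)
The plan is to adapt the proof of Theorem~\ref{simple_decay}, using Lemma~\ref{general_annulus} in place of Lemma~\ref{simple_annulus} as the one-step decay engine. The novelty is that Lemma~\ref{general_annulus} carries an additional ``floor'' hypothesis $\|\chi_\xi\|_{W_2^m(\b(p,r)^c)} \ge C_0 h^{2m}\|\chi_\xi\|_{W_2^m(\M)}$, and this is precisely the mechanism that produces the $\max$ in~(\ref{general_pointwise_lagrange_bound}): an exponential regime while the iteration runs, and a floor regime of size $h^{2m}$ once the hypothesis breaks. Fix $t := \gamma h$ for a suitable constant $\gamma \ge 1$ depending only on $\M$ and $\L_m$, chosen so that the requirement $h \le \gamma t$ of Lemma~\ref{general_annulus} is satisfied. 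For a centre $\xi \in \Xi$ and a target radius $r$ with $t \le r < \inj$, consider iterating Lemma~\ref{general_annulus} on the nested family $\b(\xi, r - kt)^c$, $k = 0, 1, \ldots, n-1$, with $n := \lfloor r/t\rfloor$.

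The key observation is that $\|\chi_\xi\|_{W_2^m(\b(\xi,\cdot)^c)}$ is non-increasing in the radius, so the floor-hypothesis at radius $r - kt$ becomes \emph{easier} to satisfy as $k$ grows. Consequently, if it holds at the outermost radius $k = 0$, it holds at every subsequent step, and the iteration runs to completion. This yields a clean dichotomy. In \emph{Case A}, where the floor-hypothesis holds at $r$, telescoping Lemma~\ref{general_annulus} with decay factor $\epsilon := (K-1)/K < 1$ gives, exactly as in the proof of Theorem~\ref{simple_decay},
\begin{equation*}
\|\chi_\xi\|^2_{W_2^m(\b(\xi, r)^c)} \le \epsilon^n \|\chi_\xi\|^2_{W_2^m(\M)} \le \epsilon^{-1} e^{-\nu r/h}\|\chi_\xi\|^2_{W_2^m(\M)}, \qquad \nu := \gamma^{-1}|\log\epsilon| > 0.
\end{equation*}
In \emph{Case B}, where the floor-hypothesis fails at $r$, directly $\|\chi_\xi\|^2_{W_2^m(\b(\xi, r)^c)} < C_0^2 h^{4m}\|\chi_\xi\|^2_{W_2^m(\M)}$. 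Combining the two cases and invoking the uniform seminorm bound $\|\chi_\xi\|^2_{W_2^m(\M)} \le C q^{d-2m}$ from~(\ref{General_Lagrange_Bound}) yields
\begin{equation*}
\|\chi_\xi\|^2_{W_2^m(\b(\xi, r)^c)} \le C\, q^{d-2m}\,\max\!\bigl(e^{-\nu r/h},\; h^{4m}\bigr), \qquad t \le r < \inj.
\end{equation*}

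The pointwise bound~(\ref{general_pointwise_lagrange_bound}) now follows from Theorem~\ref{omega_manf_continuous_est} applied on the Lipschitz subdomain $\b(x, t) \cap \b(\xi, d(x,\xi))^c$, precisely as in Theorem~\ref{simple_decay}: one gets $|\chi_\xi(x)| \le C h^{m-d/2}\|\chi_\xi\|_{W_2^m(\b(\xi, d(x,\xi))^c)}$, and then substitution, together with the identity $h^{m-d/2}\cdot q^{(d-2m)/2} = \rho^{m-d/2}$, delivers the claim (after absorbing the factor $1/2$ into a smaller $\nu$). The H\"older bound~(\ref{general_holder_bound}) then follows from Corollary~\ref{he_balls} applied with the pointwise estimate just derived, verbatim as in Theorem~\ref{simple_decay}. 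The main obstacle is the bookkeeping of the dichotomy --- specifically, the monotonicity argument that propagates the floor-hypothesis inward --- which is what makes it possible to take the maximum of the exponential and floor bounds uniformly in $r$ rather than explicitly tracking a breakdown radius.
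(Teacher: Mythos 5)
Your proposal is correct and takes essentially the same route as the paper: the paper simply formalizes your dichotomy by introducing the breakdown radius $r_0$ (the smallest radius at which $\|\chi_{\xi}\|_{W_2^m(\b(\xi,r)^c)}\le C_0 h^{2m}\|\chi_{\xi}\|_{W_2^m(\M)}$), iterating Lemma~\ref{general_annulus} for $r\le r_0$ and invoking the $h^{2m}$ floor for $r\ge r_0$, which is precisely your monotonicity-propagated Case A/Case B split. The passage to the pointwise bound via Theorem~\ref{omega_manf_continuous_est} and to the H\"older bound via Corollary~\ref{he_balls} is likewise identical to the paper's argument.
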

%
\begin{proof}
Let $r_0 $ be the smallest radius $r$ so that 
$\|\chi_{\xi}\|_{W_2^m(\b(p,r)^c)} \le C_0 h^{2m} \|\chi_{\xi}\|_{W_2^m(\M)}$.
Since $r\mapsto \|\chi_{\xi}\|_{W_2^m(\b(p,r)^c)}$ is decreasing, $r_0\le \text{diam}(\M)$.
Assume without loss that $r_0\le \inj$, since otherwise the proof proceeds exactly as
in Theorem \ref{simple_decay}.

Set $t =h/ \gamma$, and note that for $t\le r\le  r_0$, 
Lemma \ref{general_annulus} implies that 
$$\| \chi_{\xi}\|_{W_2^m(\b(\xi,r)^{c})}^2 \le \epsilon \| \chi_{\xi}\|_{W_2^m(\b(\xi,r-t)^{c})}^2$$
with $\epsilon = (K-1)/K$. 

As in the proof of Theorem \ref{simple_decay}
$$
\| \chi_{\xi}\|_{W_2^m(\b(\xi,r)^{c})}^2 
\le 
\epsilon^{-1} e^{- \nu r/h}\| \chi_{\xi}\|_{W_2^m(\M)}^2
\le 
C e^{- \nu r/h}  q^{2m-d}.
$$
Where we have set $\nu := - \gamma \log \epsilon$. Since $\epsilon = \frac{K}{K+1}<1$, 
it follows that $\nu>0$. 
The last inequality follows from (\ref{General_Lagrange_Bound}). 
On the other hand, for $r\ge r_0$, we have that $\|\chi_{\xi}\|_{W_2^m(\b(p,r)^c)} \le C_0 h^{2m} \|\chi_{\xi}\|_{W_2^m(\M)} \le C C_0 q^{2m-d} h^{2m},$ by (\ref{General_Lagrange_Bound}).
Therefore,
$$\|\chi_{\xi}\|_{W_2^m(\b(p,r)^c)} \le C q^{2m-d}  \max(h^{2m}, e^{- \nu r/h} )$$

Again, estimate (\ref{general_pointwise_lagrange_bound}) follows from the observation that $\chi_{\xi}(x)$ can be estimated by way of the zeros lemma:
$$
|\chi_{\xi}(x)| \le 
C h^{m-d/2} \|\chi_{\xi}\|_{W_2^m(\b(x,t)\cap \b(\xi,\d(x,\xi))^{c})}
\le
C h^{m-d/2} \|\chi_{\xi}\|_{W_2^m\bigl(\b\bigl(\xi,d(x,\xi)\bigr)^c\bigr)},
$$ for $h<\gamma t$.

Similarly, estimate (\ref{general_holder_bound}) follows from
Corollary \ref{he_balls}.
\end{proof}

\subsection{Implications for Interpolation and Approximation}
At this point, we are able to state three important corollaries to
Theorem \ref{general_decay} that satisfactorily answer the questions
concerning bases and approximation properties of $V_X$ discussed in
Section~\ref{sec0}. These results were previously obtained in
\cite{HNW,HNSW} for a class of Sobolev kernels. Here, we get them for
a much broader, computationally implementable class of kernels. Our
first result is that the Lebesgue constant for interpolation is
uniformly bounded.

\begin{theorem}[Lebesgue Constant]\label{LC}
  Let $\M$ be a compact Riemannian manifold of dimension $d$
   satisfying Assumption \ref{Manifold_Assumption}.  Suppose further that
  $m>d/2$ and that $k_m$ satisfies Definition
  \ref{polyharmonic_kernels}.  For a quasi-uniform set $\Xi\subset \M$,
  with mesh ratio $h/q \le \rho $, if 
  $h\le H_0$, 
  then the
  Lebesgue constant, $L = \sup_{\alpha\in \M} \sum_{\xi\in \Xi}
  |\chi_{\xi}(\alpha)|$, associated with $k_{m}$ and
  $\J$ is bounded by a constant depending only on $m$, $\rho $ and $\M$.
\end{theorem}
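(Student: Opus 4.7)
The plan is to reduce the Lebesgue constant estimate to the pointwise decay of the Lagrange functions provided by Theorem~\ref{general_decay} (or Theorem~\ref{simple_decay} in the annihilating case), together with a volume-counting argument that controls how many centers of $\Xi$ lie at a given distance from any point $\alpha\in\M$. Fix $\alpha\in\M$. By Theorem~\ref{general_decay},
\[
|\chi_{\xi}(\alpha)| \le C\rho^{m-d/2}\Bigl(\exp\bigl(-\tfrac{\nu}{h}\min(d(\alpha,\xi),\inj)\bigr) + h^{2m}\Bigr),
\]
so it suffices to bound $\sum_{\xi\in\Xi}\exp(-\nu d(\alpha,\xi)/h)$ and $|\Xi|\,h^{2m}$ uniformly in $\alpha$.

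For the exponential sum I would partition $\Xi$ into the annular shells $\Xi_j := \{\xi\in\Xi : jq \le d(\alpha,\xi) < (j+1)q\}$ for $j=0,1,2,\dots$. Because the balls $\b(\xi,q/2)$ are pairwise disjoint by the definition of the separation $q$, and each such ball has volume $\gtrsim q^d$ uniformly on the compact manifold $\M$, a standard packing estimate on the annulus of outer radius $(j+1)q$ and inner radius $jq$ yields $\#\Xi_j \le C(j+1)^{d-1}$, with a constant depending only on $\M$ and $d$. Using $d(\alpha,\xi)\ge jq$ for $\xi\in\Xi_j$ and $q = h/\rho$, I then obtain
\[
\sum_{\xi\in\Xi}\exp\!\bigl(-\tfrac{\nu}{h}d(\alpha,\xi)\bigr)
\le \sum_{j\ge 0} C(j+1)^{d-1}\exp(-\nu j/\rho),
\]
which is a convergent geometric-type series bounded by a constant depending only on $d$, $\nu$ and $\rho$.

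For the residual $h^{2m}$ term I would use the global packing bound $|\Xi|\le C\,\mathrm{vol}(\M)/q^d$, so that
\[
\sum_{\xi\in\Xi} h^{2m} = |\Xi|\,h^{2m} \le C \frac{h^{2m}}{q^d} = C\rho^d\, h^{2m-d},
\]
which, since $2m>d$ and $h\le H_0$ is bounded, is controlled by a constant depending only on $m$, $\rho$ and $\M$. Combining the two estimates with the prefactor $C\rho^{m-d/2}$ gives a uniform bound on $\sum_{\xi\in\Xi}|\chi_\xi(\alpha)|$, and taking a supremum over $\alpha\in\M$ yields the theorem.

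The only nonroutine step is the packing estimate $\#\Xi_j \le C(j+1)^{d-1}$, which on $\reals^d$ is a volume comparison but on $\M$ must use uniform control of volumes of geodesic balls on a compact Riemannian manifold; this is standard given the compactness of $\M$ and the lower bound on the volume of small balls (e.g.\ via the uniform isomorphism of exponential maps in~\eqref{isometry}). Once that is in hand, the rest is a direct geometric series calculation, and no new analytic ingredients beyond the pointwise Lagrange decay are required.
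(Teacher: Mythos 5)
Your approach is essentially the same as the paper's. The paper also fixes $\alpha$, invokes Theorem~\ref{general_decay}, splits the Lebesgue sum into the exponential part $I$ and the residual $h^{2m}$ part $II$, and then bounds $II$ exactly as you do, via $\#\Xi\le C q^{-d}$ and $q=h/\rho$, obtaining $II\le C\rho^{m+d/2}h^{2m-d}$. For $I$ the paper simply cites \cite[Theorem~4.6]{HNW}; your annular-shell packing argument is exactly the content of that cited step, so you are filling in what the paper delegates to a reference rather than taking a genuinely different route.

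There is one small slip you should repair. From the bound $|\chi_\xi(\alpha)| \le C\rho^{m-d/2}\bigl(\exp(-\tfrac{\nu}{h}\min(d(\alpha,\xi),\inj)) + h^{2m}\bigr)$ you assert it suffices to control $\sum_\xi \exp(-\nu d(\alpha,\xi)/h)$. That reduction is in the wrong direction: since $\min(d,\inj)\le d$, you have $\exp(-\tfrac{\nu}{h}\min(d,\inj))\ge \exp(-\tfrac{\nu}{h}d)$, so bounding the latter sum does not bound the former. Concretely, for centers $\xi$ with $d(\alpha,\xi)\ge\inj$ the exponential factor is frozen at $e^{-\nu\inj/h}$ and does not decay as $\xi$ moves further away, so those terms must be counted separately. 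The fix is immediate: split off the tail $\{\xi: d(\alpha,\xi)\ge\inj\}$, estimate its contribution by $\#\Xi\cdot e^{-\nu\inj/h}\le C q^{-d} e^{-\nu\inj/h}= C\rho^d h^{-d}e^{-\nu\inj/h}$, and observe that this is bounded uniformly for $h\le H_0$ because the exponential dominates the power of $h^{-1}$. On the region $d(\alpha,\xi)<\inj$ your annular decomposition $\Xi_j=\{jq\le d(\alpha,\xi)<(j+1)q\}$, together with the packing bound from the disjoint balls $\b(\xi,q/2)$ on the compact manifold, then gives the convergent series $\sum_j C(j+1)^{d-1}e^{-\nu j/\rho}$ exactly as you wrote. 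With that one-line repair your argument is complete and matches the paper's.
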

\begin{proof}
Fix $x$.
Using 
Theorem \ref{general_decay}, we estimate the sum as
$$\sum_{\xi\in \Xi} |\chi_{\xi}(x)| \le
\sum_{\xi\in \Xi} C \rho^{m-d/2}\exp\left({-\nu \frac{\min(\d(x,\xi),\inj)}{h}}\right)
+\sum_{\xi\in \Xi} C \rho^{m-d/2} h^{2m} 
=:I+II
$$ 

The first sum can be treated exactly as in \cite[Theorem 4.6]{HNW}, and is
bounded independently of $h$.
The second sum, $II$, can be estimated  
using the fact that $\#\Xi \le C  q^{-d}$,
with a constant $C = \mu(\M)/ \alpha(\M)$, where $\alpha(\M) := \inf_{x\in\M}\inf_{0<r<\inj}r^{-d} \mu(\b(x,r))$.
Thus
$$II \le  C  \rho^{m-d/2}q^{-d} h^{2m}\le C  h^{2m-d} \rho^{m+d/2}
$$
which is bounded since $2m>d$ (indeed it vanishes as $h\to 0$).
\end{proof}
%

The next consequence is the $L_p$ stability of the Lagrange basis.
To this end, we define 
$$S(k_m,\J,\Xi)  := \left\{\sum_{\xi \in\Xi} A_{\xi} k_m(\cdot,\xi) + p \mid p\in \Pi_{\J}\ \text{and}\  \sum A_{\xi} q(\xi) = 0\ \text{for all}\ q\in\Pi_{\J}\right\}$$
and use this notation in lieu of $V_X$ used in the introduction.

\begin{theorem}[Stability of Lagrange Basis]\label{stability}
Under the assumptions of Theorem \ref{general_decay},
there exist constants $0<c_1<c_2$, depending only on $k_m$, $\J$, $\M$ and $\rho$ so that
$$
c_1\|A_{p,\cdot}\|_{\ell_p(\Xi)} 
\le 
\|s \|_{L_p(\M)}
\le 
c_2 \|A_{p,\cdot}\|_{\ell_p(\Xi)}.
$$
holds for all $s =\sum_{\xi \in \Xi} A_{\xi} \chi_{\xi}  \in S(k_m,\J,\Xi)$, with 
normalized coefficients $A_{p,\xi} := q^{d/p} A_{\xi}$.
\end{theorem}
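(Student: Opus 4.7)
The Lagrange property $\chi_\xi(\zeta)=\delta_{\xi,\zeta}$ forces $A_\zeta=s(\zeta)$ for $\zeta\in\Xi$, so after noting $\|A_{p,\cdot}\|_{\ell_p(\Xi)}=q^{d/p}\|A\|_{\ell_p(\Xi)}$ the two inequalities amount to a synthesis estimate $\|s\|_{L_p}^p\le c_2^p q^d\|A\|_{\ell_p}^p$ and a Marcinkiewicz--Zygmund sampling estimate $q^d\sum_{\xi\in\Xi}|s(\xi)|^p\le c_1^{-p}\|s\|_{L_p}^p$. Both halves combine the Lebesgue constant bound from Theorem~\ref{LC} with the pointwise decay (\ref{general_pointwise_lagrange_bound}) and H\"older continuity (\ref{general_holder_bound}) from Theorem~\ref{general_decay}. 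The case $p=\infty$ is immediate, since $\|s\|_{L_\infty}\le L\|A\|_{\ell_\infty}$ and conversely $\|A\|_{\ell_\infty}=\sup_\xi|s(\xi)|\le\|s\|_{L_\infty}$. So assume $1\le p<\infty$.

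For the upper bound, apply H\"older's inequality pointwise,
\[
|s(x)|^p \le \Big(\sum_\xi|\chi_\xi(x)|\Big)^{p-1}\sum_\xi|A_\xi|^p|\chi_\xi(x)|\le L^{p-1}\sum_\xi|A_\xi|^p|\chi_\xi(x)|,
\]
and integrate. The proof then reduces to $\|\chi_\xi\|_{L_1(\M)}\le Cq^d$, which follows from (\ref{general_pointwise_lagrange_bound}) by splitting $\M$ into annuli $\{kh\le d(x,\xi)<(k+1)h\}$: the exponential piece integrates geometrically to $O(h^d)$, while the residual $h^{2m}$ term contributes $O(h^{2m})$, both of which are $O(q^d)$ since $h\le\rho q$ and $2m>d$. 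This yields $\|s\|_{L_p}^p\le CL^{p-1}q^d\|A\|_{\ell_p}^p$.

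For the lower bound, fix $\delta\in(0,1/2)$ to be chosen and use the pairwise disjoint balls $\{\b(\xi,\delta q)\}_{\xi\in\Xi}$, each of $\mu$-volume at least $c_0\delta^d q^d$. From $|A_\xi|^p\le 2^{p-1}(|s(x)|^p+|s(x)-s(\xi)|^p)$, averaging over $x\in \b(\xi,\delta q)$ and summing over $\xi$ yields
\[
c_0\delta^d q^d\sum_\xi|A_\xi|^p\le 2^{p-1}\|s\|_{L_p}^p+2^{p-1}\sum_\xi\int_{\b(\xi,\delta q)}|s(x)-s(\xi)|^p\,\dif\mu.
\]
By H\"older's inequality and $\sum_\zeta|\chi_\zeta(x)-\chi_\zeta(\xi)|\le 2L$, we have $|s(x)-s(\xi)|^p\le(2L)^{p-1}\sum_\zeta|A_\zeta|^p|\chi_\zeta(x)-\chi_\zeta(\xi)|$. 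The crux is to bound each increment by the minimum of two estimates: the H\"older bound $C\delta^\epsilon$ from (\ref{general_holder_bound}) and the decay bound $C\rho^{m-d/2}e^{-\nu d(\xi,\zeta)/(2h)}$ derived from (\ref{general_pointwise_lagrange_bound}); after interchanging the order of summation, for each fixed $\zeta$ the sum $\sum_\xi\min(\delta^\epsilon,\rho^{m-d/2}e^{-\nu d(\xi,\zeta)/(2h)})$ is bounded by $C_\rho(\log\delta^{-1})^d\delta^\epsilon$, obtained by splitting at the threshold $R\sim h\log\delta^{-\epsilon}$ so that the near contribution of at most $C(R/q)^d$ separated points weighted by $\delta^\epsilon$ balances the geometric far tail.

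Combining, the error term is bounded by $C_\rho L^{p-1}\delta^{d+\epsilon/2}q^d\|A\|_{\ell_p}^p$ (absorbing the logarithmic factor into the $\rho$-dependent constant). Dividing the earlier display through by $\delta^d q^d$ gives $c_0\|A\|_{\ell_p}^p\le 2^{p-1}(\delta q)^{-d}\|s\|_{L_p}^p+C_\rho L^{p-1}\delta^{\epsilon/2}\|A\|_{\ell_p}^p$, and choosing $\delta$ small enough (in terms of $L,\rho,p,\epsilon$) that $C_\rho L^{p-1}\delta^{\epsilon/2}\le c_0/2$ absorbs the second term into the left and completes the proof. The main obstacle is precisely this interaction estimate: using only the H\"older bound globally produces a sum of size $\#\Xi\cdot\delta^\epsilon\sim\delta^\epsilon/q^d$, which destroys the $q^d$ normalization, so the exponential decay of $\chi_\zeta$ at distant centers must be invoked to separate the near-diagonal and far-diagonal regimes and to obtain a bound whose $\delta$ dependence can be absorbed while its $q$ dependence is favorable.
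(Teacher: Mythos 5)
Your upper bound is fine, and is essentially the paper's: H\"older against the Lebesgue constant plus the bound $\|\chi_\xi\|_{L_1(\M)}\le C_\rho q^d$, which you correctly extract from (\ref{general_pointwise_lagrange_bound}) \emph{including} the $h^{2m}$ residual (the paper does $p=1$, $p=\infty$ and interpolates, which is equivalent). The lower bound, however, has a genuine gap, and it sits exactly at the point that distinguishes the general case from the annihilation case. In your interaction estimate you bound $|\chi_\zeta(x)-\chi_\zeta(\xi)|$ by $\min\bigl(C\delta^\epsilon,\,C\rho^{m-d/2}e^{-\nu d(\xi,\zeta)/(2h)}\bigr)$ and claim the second entry is ``derived from (\ref{general_pointwise_lagrange_bound})''. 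It is not: in the general case that estimate only gives decay down to the floor $C\rho^{m-d/2}\max\bigl(e^{-\nu\min(d(x,\zeta),\inj)/h},\,h^{2m}\bigr)$, and the $h^{2m}$ floor (together with the saturation beyond $\inj$) is precisely the slow-decay phenomenon that forces the paper to modify the argument of \cite{HNSW}. If you restore the floor, your per-$\zeta$ sum over $\xi$ acquires an extra contribution of size $\#\Xi\cdot h^{2m}\sim\rho^d h^{2m-d}$ which is independent of $\delta$, so your absorption step (choosing $\delta$ small) cannot remove it. It could only be absorbed by additionally taking $h$ small; but since in your scheme it is multiplied by the H\"older factor $(2L)^{p-1}$, the required smallness of $h$ degenerates as $p\to\infty$, which is incompatible with the theorem's claim that $c_1$ depends only on $k_m$, $\J$, $\M$ and $\rho$ (and with a single density threshold $h\le H_1$).

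This is exactly what the paper's splitting $\chi_\xi=\chi_\xi\phi_\xi+\chi_\xi(1-\phi_\xi)=g_\xi+b_\xi$ with the cutoff (\ref{basis_split}) is designed to do: $g_\xi$ is a Lagrange-type basis with genuine exponential decay and the required equicontinuity, so the stability theorem of \cite{HNSW} (or, if you prefer, your own MZ-type argument, which is a legitimate self-contained substitute for it) applies to $\sum A_\xi g_\xi$; the floor is confined to $b_\xi$ with $|b_\xi|\le h^{2m}$, and $\|\sum A_\xi b_\xi\|_p\le C\rho^d h^{2m-d}\|A_{p,\cdot}\|_{\ell_p}$ is estimated directly from its $L_1$ and $L_\infty$ bounds, so the perturbation carries no $L^{p-1}$ amplification and is controlled by one $p$-independent smallness condition on $h$. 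So either adopt that split before running your disjoint-ball argument (apply it to the $g_\xi$'s only), or restrict your argument to the setting of Theorem \ref{simple_decay}, where the pure exponential bound you used is actually available and your proof goes through as written.
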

\begin{proof}
When $\L_m$ annihilates $\Pi_{\J}$, this is a direct consequence of the pointwise estimates obtained in Theorem \ref{simple_decay}. 
We observe that the following  three conditions hold:
\begin{enumerate}
\item The basis $\left(\chi_{\xi}\right)_{\xi \in \Xi}$ is a Lagrange basis.
\item The basis has decay $|\chi_{\xi}(x)|\le C \rho^{m-d/2} \exp\left(- \frac{\nu}{h} \min\bigl(d(x,\xi),\inj\bigr)\right)$.
\item The basis has the equicontinuity condition 
$
| \chi_{\xi}(x) - \chi_{\xi}(y)|
\le 
C_2 \left[\frac{\d(x,y)}{q}\right]^{\epsilon}. 
$
\end{enumerate}
Thus the result \cite[Theorem 3.10]{HNSW} applies.

In the general case, the result still holds, despite the fact that item 2 may fail. I.e., the Lagrange functions may
decay more slowly than the basis functions considered in \cite{HNSW}, and a minor modification 
is required to apply of \cite[Theorem 3.10]{HNSW}.

The upper bound 
$\|s \|_{L_p(\M)}
\le 
c_2 \|A_{p,\cdot}\|_{\ell_p(\Xi)}$ 
follows directly from the estimate (\ref{general_pointwise_lagrange_bound}).
Indeed, the case $p=\infty$,  is none other than the Lebesgue constant estimate Theorem \ref{LC}, while 
the $p=1$ case follows by the uniform bound on 
$$\|\chi_{\xi}\|_1\le C\rho^{m-d/2}\left( C h^d + \mathrm{vol}(\M)\left(h^{2m} + e^{-\nu \inj /h}\right)\right)
 \le C \rho^{m+d/2}  q^d.$$ 
 The case $1<p<\infty$ follows by interpolation.

To handle the lower bound, we utilize functions $\phi_{\xi}$, 
defined in a similar way as  in (\ref{E:cutoff}), satisfying $\phi_{\xi}\circ \mathrm{Exp}_{\xi} = \sigma$,
with
\begin{equation}\label{basis_split}
\sigma(x)
= 
\begin{cases}
1 & |x|\le r_0\\
h^{-2m}e^{-\frac{\nu|x|}{h}} & r_0<|x| \le \inj\\
h^{-2m}e^{-\frac{\nu\inj}{h}} & |x|> \inj
\end{cases}
\qquad \text{or} 
\qquad 
\sigma(x)
= 
\begin{cases}
1 & |x|\le r_0\\
h^{-2m}e^{-\frac{\nu|x|}{h}} & r_0<|x| 
\end{cases}
\end{equation}
and with threshold value $r_0 := -\frac{2m}{\nu}h \log h$ (the second
definition is chosen if $\inj< r_0$).  
It follows that 
$$\chi_{\xi} =\chi_{\xi} \phi_{\xi} + \chi_{\xi} (1- \phi_{\xi}) =: g_{\xi} + b_{\xi},$$
and $g_{\xi}$ satisfies items 1--3 above 
(in particular, item 3 follows since $\phi_{\xi}$ is bounded and $\mathrm{Lip}(1)$, with Lipschitz
constant $\nu/h$)
and \cite[Theorem 3.10]{HNSW} applies. In particular, there is $c_1>0$ so that
$\|\sum_{\xi\in\Xi} A_{\xi} g_{\xi}\|_p\ge c_1\|A_{p,\cdot}\|_{\ell_p(\Xi)} $.

On the other hand, $|b_{\xi}(x)| \le h^{2m}$, implies that
$\|\sum_{\xi\in\Xi} A_{\xi} b_{\xi}\|_p \le C \rho^d \|A_{p,\cdot}\|_{\ell_p{\Xi}} h^{2m-d}$
since
$$\int_{\M} |\sum_{\xi\in\Xi} A_{\xi} b_{\xi}(x)|\dif x \le  \|A\|_{\ell_1(\Xi)} \mu(\M)h^{2m}
\quad \text{and} \quad
\max_{x \in \M} |\sum_{\xi\in\Xi} A_{\xi} b_{\xi}(x)|\le C  \rho^d \| A \|_{\ell_{\infty}(\Xi)}h^{2m-d}.$$
Thus, for
$s = \sum_{\xi\in \Xi} A_{\xi} \chi_{\xi}$,
$$\|s\|_p  = \|\sum_{\xi\in \Xi} A_{\xi} \chi_{\xi}\|_p\ge 
\left(2^{1-p} \|\sum_{\xi\in \Xi} A_{\xi} g_{\xi}\|_p^p -  
\|\sum_{\xi\in \Xi} A_{\xi} b_{\xi}\|_p^p\right)^{1/p}
\ge 
\left(\frac{c_1}{2}-o(h)\right)\, \|A_{p,\cdot}\|_{\ell_p(\Xi)},  $$
where we have used the inequality 
$|\sum_{\xi\in\Xi} A_{\xi} g_{\xi}|^p \le2^{p-1}\left(|\sum_{\xi\in\Xi} A_{\xi} \chi_{\xi}|^p +|-\sum_{\xi\in\Xi} A_{\xi} 
b_{\xi}|^p\right)$.
\end{proof}

Our final consequence treats the 
$L_p$ stability of the $L_2$ projector. This was a primary
goal of \cite{HNSW}, and, in light of Theorem \ref{general_decay}, we can
produce a similar result here, with a minor modification to handle
the slower decay of the Lagrange functions.

Let $V:\comps^{\Xi}\to S(k_m,\J,\Xi)$ be a basis
``synthesis operator'' $V: (A_{\xi})_{\xi\in\Xi} \mapsto \sum_{\xi\in\Xi} A_{\xi} v_{\xi}$,
for a basis $(v_{\xi})_{\xi \in \Xi}$ of $S(k_m,\J,\Xi)$.
Likewise, let $V^{*}:L_1(\M)\to \comps^{\Xi}$ be its formal adjoint 
$V^{*}:f \mapsto \bigl(\langle f, v_{\xi}\rangle\bigr)_{\xi\in\Xi}.$
The $L_2$ projector is then
 $T_{\Xi} = V (V^* V)^{-1} V^*: L_1(\M) \to S(k_m,\J,\Xi) ,$ in the sense
 that when $f\in L_2(\M)$,  $T_{\Xi} f$ is the best $L_2$ approximant to $f$ from $S(k_m,\J,\Xi)$.
 
 The $L_2$ norm of this
 projector is $1$ (it being an orthogonal projector), while the $L_p$ and $L_p'$ norms
 are equal, because it is self-adjoint. Thus, to estimate its $L_p$ operator norm ($1\le p\le \infty$),
 it suffices to estimate its $L_{\infty}$ norm.
 
\begin{theorem}\label{main}
Under the assumptions of Theorem \ref{general_decay},
for all
$1\le p\le \infty$, the $L_p$ operator norm of the $L_2$  projector 
$T_{\Xi}$ is
bounded by a constant depending only on $\M$, $\rho$ $k_m$ and $\J$.
\end{theorem}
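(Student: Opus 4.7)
Since $T_\Xi$ is an orthogonal projector on $L_2(\M)$ we have $\|T_\Xi\|_{L_2\to L_2}=1$, and self-adjointness gives $\|T_\Xi\|_{L_p\to L_p}=\|T_\Xi\|_{L_{p'}\to L_{p'}}$ for conjugate exponents. Riesz--Thorin interpolation against the $L_2$ bound therefore reduces the theorem to the uniform estimate $\|T_\Xi f\|_{L_\infty}\le C\|f\|_{L_\infty}$. To prove this I would take the Lagrange functions themselves as the basis, writing $V\colon\mathbf{c}\mapsto\sum_{\xi\in\Xi}c_\xi\chi_\xi$, $V^*\colon f\mapsto(\langle f,\chi_\xi\rangle)_{\xi\in\Xi}$, and $T_\Xi=VG^{-1}V^*$ with Gram matrix $G_{\xi,\eta}=\langle\chi_\xi,\chi_\eta\rangle_{L_2(\M)}$.

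The upper bound in Theorem~\ref{stability} at $p=\infty$ gives $\|V\mathbf{c}\|_{L_\infty}\le c_2\|\mathbf{c}\|_{\ell_\infty}$, while the estimate $\|\chi_\xi\|_{L_1(\M)}\le C\rho^{m+d/2}q^d$ (established inside the proof of Theorem~\ref{stability}) yields $\|V^*f\|_{\ell_\infty}\le Cq^d\|f\|_{L_\infty}$. The task thereby reduces to showing that the renormalized Gram matrix $G_0:=q^{-d}G$ is uniformly invertible on $\ell_\infty$, i.e.\ $\|G_0^{-1}\|_{\ell_\infty\to\ell_\infty}\le C$ with $C$ depending only on $\M$, $\rho$, $k_m$ and $\J$.

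For this step I would combine an $\ell_2$ bound with off-diagonal decay. Theorem~\ref{stability} at $p=2$ translates into $c_1^2\|\mathbf{c}\|_{\ell_2}^2\le\mathbf{c}^T G_0\mathbf{c}\le c_2^2\|\mathbf{c}\|_{\ell_2}^2$, hence $\|G_0^{-1}\|_{\ell_2\to\ell_2}\le c_1^{-2}$. Applying \eqref{general_pointwise_lagrange_bound} to both $\chi_\xi$ and $\chi_\eta$, and splitting the integration domain along the geodesic bisector of $\xi$ and $\eta$, one gets
\[
|(G_0)_{\xi,\eta}|\le C\rho^{2m-d}\max\!\left(\exp\!\left(-\tfrac{\nu}{2h}\min(\d(\xi,\eta),\inj)\right),\,h^{2m}\right).
\]
The volume comparison $\#\{\eta\in\Xi:\d(\xi,\eta)\le R\}\le C(R/q)^d$ gives $\|G_0\|_{\ell_\infty\to\ell_\infty}\le C$, and a Jaffard-type argument transfers off-diagonal decay from $G_0$ to $G_0^{-1}$, yielding the desired bound.

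The main obstacle is the $h^{2m}$ floor in the Lagrange decay, which prevents a direct appeal to the sharper argument used in \cite{HNSW} for bases that decay exponentially. I would circumvent it exactly as in the proof of Theorem~\ref{stability}, by importing the splitting $\chi_\xi=g_\xi+b_\xi$ of \eqref{basis_split}: the part of $G_0$ built from the $g_\xi$'s has genuine exponential off-diagonal decay and is handled by the cited reference, whereas the part built from the $b_\xi$'s has entries of size $O(h^{2m})$ and hence $\ell_\infty\to\ell_\infty$ operator norm $O(h^{2m-d})$, which is bounded (and $o(1)$ as $h\to 0$) since $2m>d$. A Neumann series in the $b$-perturbation, initialized at the inverse of the well-controlled $g$-part, then delivers the required inverse bound on $G_0$, which in turn completes the $L_\infty$ estimate on $T_\Xi$ and, via the initial interpolation step, the theorem.
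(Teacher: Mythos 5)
Your proposal is correct and follows essentially the same route as the paper: reduce to the $L_\infty$ operator norm by self-adjointness, write $T_\Xi = V(V^*V)^{-1}V^*$ in the normalized Lagrange basis, and control $(V^*V)^{-1}$ on $\ell_\infty$ by splitting $\chi_\xi = g_\xi + b_\xi$ as in Theorem~\ref{stability}, handling the $g$-Gram matrix via \cite[Proposition~4.1]{HNSW} and treating the $b$-contribution as a small perturbation whose $\ell_\infty$ norm is $O(h^{2m-d})$. The preliminary Jaffard-type discussion is a detour you correctly abandon; once you switch to the $g/b$ split, the argument coincides with the paper's.
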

\begin{proof} 
  When $\L_m$ annihilates $\Pi_{\J}$, Theorem \ref{simple_decay} and
  Theorem \ref{stability} satisfy the conditions of \cite[Theorem
  5.1]{HNSW} (the Lagrange basis is stable and rapidly decaying), and
  the result follows.

  In the general case, we cannot directly apply this theorem, because
  the basis does not decay rapidly enough.  We take as our basis
  $v_{\xi} = \chi_{\xi,2} := q^{-d/2}\chi_{\xi}$, the $L_2$ normalized
  Lagrange basis.  It follows from Theorem \ref{stability} that
  $\|V\|_{\ell_{\infty}(\Xi) \to L_{\infty}(\M)}\le c_2q^{-d/2}$ and
  $\|V^*\|_{L_{\infty}(\M) \to \ell_{\infty}(\Xi)}\le c_2q^{d/2}$.
  Thus, to estimate the $L_{\infty}$ operator norm of $T_{\Xi}$ (and
  thereby all other $L_p$ norms), it suffices to estimate the
  $\ell_{\infty}(\Xi)\to \ell_{\infty}(\Xi)$ norm of the inverse Gram
  matrix $(V^* V)^{-1}$.

We make the split $g_{\xi} =\chi_{\xi} \phi_{\xi}$ and $b_{\xi}= \chi_{\xi} (1- \phi_{\xi})$
 with $\phi_{\xi}\circ \mathrm{Exp}_{\xi} = \sigma$ defined as in (\ref{basis_split}). 
And note that
$\chi_{\xi,2} =\chi_{\xi,2} \phi_{\xi} + \chi_{\xi,2} (1- \phi_{\xi}) =: g_{\xi,2} + b_{\xi,2},$

It follows
 that $V^* V = G + B$, with $G_{\xi,\zeta} = \langle g_{\xi,2}, g_{\zeta,2}\rangle_{L_2(\M)}$.
 
 The functions $(g_{\xi})$ are a Lagrange basis, 
 in the sense that $g_{\xi}(\zeta) = \delta_{\xi,\zeta}$
 (although they span a different space than $S(k_m,\J,\Xi)$
 and, as observed in the proof of Theorem \ref{stability}, they are $L_p$ stable. They also satisfy the decay
 conditions of \cite[Proposition 4.1]{HNSW}, and by applying this result we see that 
 $\|G^{-1}\|_{\infty}$ is bounded by a constant.
 
 On the other hand, $|B_{\xi,\zeta}| \le  
 |\langle g_{\xi,2},b_{\zeta,2}\rangle| +
 |\langle b_{\xi,2},g_{\zeta,2}\rangle| +  |\langle b_{\xi,2},b_{\zeta,2}\rangle| 
 \le C h^{2m}$, and
  $\|B\|_{\infty} \le \max_{\xi} \sum_{\zeta} |B_{\xi,\zeta}|\le C \rho^d h^{2m-d}.$
  The theorem follows by noting that
  $(V^* V)  = G (\mathrm{Id}+ G^{-1} B)$, and, hence,
$\|(V^* V) ^{-1}\|_{\infty} \le \|G^{-1}\|_{\infty} \left(1+ o(h)\right).$

\end{proof}
\subsection{Spheres and $SO(3)$}
We now explore some further consequences of the results of the previous section. 
We will shortly see that Theorems \ref{LC} and \ref{main} imply that $I_{\Xi}$ and $T_{\Xi}$ are {\em near-best}. In some important cases,  we can then use these projectors to observe precise rates of convergence for interpolation
and least squares minimization, better rates than were previously known. 

For the kernels considered in section~\ref{ss_examples}, theoretical 
approximation results are known in some special cases, including spheres and $SO(3)$. The difficulty is that these results are often not practical, because they are derived from approximation schemes that are difficult to implement.
The good news is that the stability of the schemes $I_{\Xi}$ and $T_{\Xi}$ imply that these
operators, which are associated with \emph{practical} schemes, inherit the \emph{same} convergence rates. Indeed, for a normed linear space $Y$ and
a bounded projector $P:Y \to Y$, one has 
for $f\in Y$, 
\begin{equation}
\label{lebesgue_ineq}
\|f- Pf\| =  \inf_{s\in \mathrm{ran}P} \|f - s + Ps - Pf\| \le (1+\|P\|) \d(f,\mathrm{ran} P).
\end{equation}
This fundamental observation is known as a Lebesgue inequality,
and we employ it with $P=I_{\Xi}$ and $Y= C(\M)$ as well as with $P =  T_{\Xi}$ and  $Y= L_p(\M)$.
In recent years, a concerted effort has been undertaken\footnote{This stands in contrast
to the more classical, mainstream theory of kernel approximation, where approximation
properties are investigated and understood only for functions coming from the reproducing
kernel (semi-)Hilbert space associated with a conditionally positive definite kernel. Obtaining an understanding outside of this context has generally required
indirect, theoretical approximation schemes, and it has not been obvious, until now, 
that such results would have practical consequences.}
 to understand the general $L_p$
convergence rates (i.e., the behavior of $ \d(f,S(k_m, \J, \Xi))_p$  as $\Xi$ becomes dense in $\M$)
of certain well-known kernels in terms of smoothness assumptions on the target function $f$ 
and on the density of the point-set $\Xi$, measured
by the fill distance $h$. 

To measure smoothness of the target function, we make use of 
the classical (Sobolev, Besov) smoothness spaces introduced in Definition \ref{Sob_Norm} and Definition \ref{Besov},
with the exception that for approximation in $L_{\infty}$, we make the 
(usual) replacement of $C^{2m}$ for  $W_{\infty}^{2m}$ (but using the same norm).
As a shorthand, we capture the smoothness spaces we use by means of a common
notation, $\mathcal{W}_p^s(\M)$. For  $m>d/2$
denote the space $\mathcal{W}_p^s(\M)$  by
\begin{itemize}
\item  $\mathcal{W}_p^s(\M) = C^{2m}(\M)$, when $p= \infty$ and $s=2m$
\item $\mathcal{W}_p^s(\M) = W_p^{2m}(\M)$ when $1\le p< \infty$ and $s=2m$
\item  $\mathcal{W}_p^s(\M) = B_{p,\infty}^{s}(\M)$ when $1\le p\le \infty$ and $0<s<2m$.
\end{itemize}
\begin{corollary}\label{sphere_result}
For $\M=\sphere$, and $m>d/2$
the surface splines introduced in Example 3: $k_{m}(x,y) = \phi_s(x\cdot,y)$ satisfy the following.
There is a constant $C$ so that, for a sufficiently dense set $\Xi\subset \sphere$,  and for $f\in\mathcal{W}_p^s$ .
\begin{enumerate}
\item For $f\in \mathcal{W}_{\infty}^s$, $\|I_{\Xi} f - f\| \le C h^{s} \|f\|_{\mathcal{W}_{\infty}^s}$
\item For $1\le p\le \infty$ and for $f \in \mathcal{W}_p^s,$ $\|T_{\Xi} f - f\|_{p} \le h^{s} \|f\|_{\mathcal{W}_{\infty}^s}$.
\end{enumerate}
\end{corollary}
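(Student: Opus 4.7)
The plan is to apply the Lebesgue inequality (\ref{lebesgue_ineq}) and reduce the corollary to a direct (Jackson-type) estimate on $\d(f, S(k_m,\J,\Xi))_p$ that is already available in the literature for restricted surface splines on spheres (see \cite{MNPW}). The non-trivial work in this paper has already been done: Theorems \ref{LC} and \ref{main} supply uniform operator-norm bounds for the projectors $I_\Xi$ and $T_\Xi$, and that is exactly what is missing from a pure direct-theorem approach.

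First I would verify the hypotheses. The sphere $\sphere = SO(d{+}1)/SO(d)$ is a compact two-point homogeneous space, so Lemma \ref{homogeneous} ensures Assumption \ref{Manifold_Assumption} holds. Example \ref{example_rss} shows that $k_m(x,y) = \phi_s(x\cdot y)$ is polyharmonic in the sense of Definition \ref{polyharmonic_kernels}, with $\L_m = C_m\prod_{\nu=1}^m[\Delta - (\nu - d/2)(\nu+d/2-1)]$ and associated finite set $\J$. For $\Xi$ quasi-uniform with mesh ratio $\rho$ and density $h \le H_1$, Theorem \ref{LC} gives $\|I_\Xi\|_{C(\sphere)\to C(\sphere)} \le C(\M,\rho,m)$, and Theorem \ref{main} gives $\|T_\Xi\|_{L_p(\sphere)\to L_p(\sphere)} \le C(\M,\rho,m)$ uniformly in $p$. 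Both operators are projectors onto $S(k_m,\J,\Xi)$, so (\ref{lebesgue_ineq}) yields
\begin{equation*}
\|f - I_\Xi f\|_\infty \le (1+\|I_\Xi\|)\,\d\bigl(f, S(k_m,\J,\Xi)\bigr)_{L_\infty}, \quad \|f - T_\Xi f\|_p \le (1+\|T_\Xi\|)\,\d\bigl(f, S(k_m,\J,\Xi)\bigr)_{L_p}.
\end{equation*}

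It remains to establish $\d(f, S(k_m,\J,\Xi))_p \le C h^s \|f\|_{\mathcal{W}_p^s}$. For the endpoint $s = 2m$ this is exactly the direct theorem for surface splines on spheres proved in \cite{MNPW}: given $f \in W_p^{2m}(\sphere)$ (or $C^{2m}(\sphere)$ when $p=\infty$) there is an $s_f \in S(k_m,\J,\Xi)$ with $\|f - s_f\|_p \le C h^{2m}\|f\|_{\mathcal{W}_p^{2m}}$. For the intermediate Besov case $0 < s < 2m$, I would run the standard $K$-functional argument: for any $g \in W_p^{2m}(\sphere)$,
\begin{equation*}
\d\bigl(f,S(k_m,\J,\Xi)\bigr)_p \le \|f-g\|_p + \d\bigl(g,S(k_m,\J,\Xi)\bigr)_p \le \|f-g\|_p + Ch^{2m}\|g\|_{W_p^{2m}},
\end{equation*}
so taking the infimum over $g$ gives $\d(f,S(k_m,\J,\Xi))_p \le C\,K(f, h^{2m}/C) \le C h^{s}\|f\|_{B_{p,\infty}^s}$ by the definition of the Besov norm. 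Combining this with the two Lebesgue inequalities above delivers both (1) and (2).

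The conceptual obstacle here is essentially bookkeeping: ensuring that the direct theorem cited from \cite{MNPW} is stated in a form that applies uniformly across all admissible $(p,s)$ (Sobolev, Besov, continuous), and verifying that the $K$-functional argument passes through without hidden constants depending on $f$ or $\Xi$. No new geometric or operator-theoretic ideas are required beyond what the preceding sections of the paper have already established.
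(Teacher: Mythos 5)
Your proposal is correct and takes essentially the same route as the paper: both proofs reduce the corollary to a direct (Jackson-type) estimate on $\d(f, S(k_m,\J,\Xi))_p$ via the Lebesgue inequality (\ref{lebesgue_ineq}) together with the operator bounds of Theorems \ref{LC} and \ref{main}, and then cite \cite{MNPW} (or \cite{Hsphere}) for that direct estimate; the paper simply cites additional corollaries from those references for the Besov case where you spell out the $K$-functional interpolation step yourself. One minor slip worth flagging: the expression ``$K(f, h^{2m}/C)$'' should read $K(f, C^{1/(2m)}h)$ given the $t^{2m}$-scaling in Definition \ref{Besov}, though the resulting bound $Ch^s\|f\|_{B^s_{p,\infty}}$ is unaffected.
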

\begin{proof}
  As with the Sobolev kernels, $\phi_s$ is of the form $G_{\beta}
  +\psi*G_{\beta}$ as considered in \cite{MNPW}, the result follows
  direct from \cite[Theorem 6.8]{MNPW}. Alternatively, it follows from
  \cite[Theorem 6.1]{Hsphere}, which treats kernels on the sphere of
  the type in Definition \ref{polyharmonic_kernels}. The Besov space
  result follows from \cite[Corollary 6.13]{MNPW} or \cite[Corollary
  6.2]{Hsphere}.
\end{proof}

\begin{corollary}\label{SO(3)_result}
  For $\M=SO(3)$, and $m\ge 2$ The surface splines, $\bfk_{m}$,
  introduced in Example 4 satisfy the following.  There is a constant
  $C$ so that, for a sufficiently dense set $\Xi\subset SO(3)$, and
  for $f\in\mathcal{W}_p^s$ .
\begin{enumerate}
\item For $f\in \mathcal{W}_{\infty}^s$, $\|I_{\Xi} f - f\| \le C h^{s} \|f\|_{\mathcal{W}_{\infty}^s}$
\item For $1\le p\le \infty$ and for $f \in \mathcal{W}_p^s,$ $\|T_{\Xi} f - f\|_{p} \le h^{s} \|f\|_{\mathcal{W}_{\infty}^s}$.
\end{enumerate}
\end{corollary}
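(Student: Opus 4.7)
The plan is to mirror the proof of Corollary \ref{sphere_result}, with $SO(3)$ taking the place of $\sphere$. First I would verify that the machinery developed in this paper applies. Because $SO(3) = \mathsf{P}^3$ is a two-point homogeneous space, Lemma \ref{homogeneous} shows that Assumption \ref{Manifold_Assumption} holds; Example \ref{example_so3} exhibits $\bfk_m$ as a polyharmonic kernel in the sense of Definition \ref{polyharmonic_kernels}, with $\Pi_{\J} = \Pi_{m-2}$ and $\L_m = C_m \prod_{\nu=0}^{m-1}\bigl(\Delta - (\nu^2 - 1/4)\bigr)$. Consequently, for any quasiuniform $\Xi \subset SO(3)$ with $h \le H_0$ (resp.\ $h \le H_1$), Theorem \ref{LC} yields a uniform bound on the Lebesgue constant of $I_{\Xi}$ and Theorem \ref{main} yields a uniform bound on the $L_p$ operator norm of $T_{\Xi}$, each depending only on $m$, $\rho$, and $SO(3)$.

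Next I would apply the Lebesgue inequality (\ref{lebesgue_ineq}). For part (1), taking $P = I_{\Xi}$ and $Y = C(SO(3))$ gives
\[
\|f - I_{\Xi} f\|_{\infty} \le (1 + L)\,\d\bigl(f, S(\bfk_m, \J, \Xi)\bigr)_{L_{\infty}(SO(3))}.
\]
For part (2), taking $P = T_{\Xi}$ and $Y = L_p(SO(3))$ gives the analogous bound in $L_p$ for $1 \le p \le \infty$. It therefore suffices to establish a direct (Jackson-type) estimate
\[
\d\bigl(f, S(\bfk_m, \J, \Xi)\bigr)_{L_p(SO(3))} \le C h^s\,\|f\|_{\mathcal{W}_p^s(SO(3))}
\]
for $f \in \mathcal{W}_p^s(SO(3))$.

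This direct estimate is exactly the $SO(3)$ analogue of the ingredient supplied by \cite[Theorem~6.8]{MNPW} in the spherical case. At integer order $s = 2m$ it follows from the reproduction formula in Example \ref{example_so3}, together with the Wigner D-function expansion of $\bfk_m$ (specifically, the bound on $\widetilde{\bfk}_m(\ell,j,\iota)$ given there) in the manner of \cite{HS}; for intermediate $0 < s < 2m$ it is obtained by $K$-functional interpolation between $L_p(SO(3))$ and $W_p^{2m}(SO(3))$, precisely as in the derivation of \cite[Corollary~6.13]{MNPW}. Combining this direct estimate with the Lebesgue inequality yields both assertions.

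The main obstacle, as in the sphere case, lies not in the Lebesgue-inequality deduction (which is formal once the projectors are bounded) but in securing the direct theorem across the full Besov scale and for all $1 \le p \le \infty$. The Wigner-expansion coefficients for $\bfk_m$ decay at the same algebraic rate as the Gegenbauer coefficients (\ref{Gegenbauer_expansion}) of the restricted surface splines, so the spherical argument carries over with Funk--Hecke-type tools replaced by their Peter--Weyl analogues on $SO(3)$; once that direct estimate is in hand, Corollary \ref{SO(3)_result} follows.
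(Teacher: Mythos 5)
Your proposal is correct and follows essentially the same route as the paper: stability of $I_{\Xi}$ and $T_{\Xi}$ from Theorems \ref{LC} and \ref{main}, the Lebesgue inequality (\ref{lebesgue_ineq}), and a direct (Jackson-type) estimate for $\d(f,S(\bfk_m,\J,\Xi))_p$. The paper's own proof is terser -- it simply cites \cite[Theorem~9]{HS} for $s=2m$ and \cite[Theorem~12]{HS} for $0<s<2m$ for that direct estimate -- whereas you additionally sketch how those results are obtained, but the underlying argument is the same.
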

\begin{proof}
This follows from \cite[Theorem 9]{HS} for the case of full smoothness and
from \cite[Theorem 12]{HS} when $0<s<2m$.
\end{proof}
\begin{appendix}

  \section{A Zeros Lemma for Lipschitz Domains on
    Manifolds} \label{lipschitz_domains}
  
Results concerning Sobolev bounds on functions with many zeros are
known for Lipschitz domains in $\RR^d$
\cite{NWW,Narcowich-etal-06-1}. Our aim is to extend these results to
certain Lipschitz domains on manifolds. Before we do that, however, we
will need to improve the $\RR^d$ results in
\cite{NWW,Narcowich-etal-06-1}.

\subsection{Lipschitz domains in $\RR^d$}
Consider a domain $\Omega\subset \reals^d$ that is bounded, has a
Lipschitz boundary, and satisfies an interior cone condition, where
the cone $C_\Omega$ has a maximum radius $R_0$ and aperture\footnote{Aperture here is the angle across the cone, $2\varphi$ in this case. In optics, aperture would be $\varphi$. }
$2\varphi$. Of course, the cone condition will be obeyed if we use any
radius $0<R\le R_0$. The theorem that we will give below requires
covering $\Omega$ with certain star-shaped domains.

We will say that a domain $\stardom$ is \emph{star shaped with respect
  to a ball} $B(x_c,\inrad):= \{x\in \RR^d\colon |x-x_c|<\inrad \}$
if, for every $x\in \stardom$, the closed convex hull of $\{x\}\cup B$
is contained in $\stardom$ \cite[Chapter 4]{Brenner-Scott-94-1}.  For
$\stardom$ bounded, there is a measure of how close to spherical
$\stardom$ is; namely, the \emph{chunkiness parameter} $\gamma$
\cite[Definition 4.2.16]{Brenner-Scott-94-1}.  This is defined as the
ratio of $d_\stardom$ to the radius of the largest ball relative to
which $\stardom$ is star shaped. When $\stardom$ is a sphere,
$\gamma=2$. If there is a ball $B(x_c,R) \supseteq \stardom$, then
$r<d_\stardom <2R$ and $\gamma \le \frac{2R}{\inrad}$. Finally, such
domains satisfy an interior cone condition and certain Sobolev bounds,
which are stated in the next two propositions.

\begin{proposition}[{\cite[Proposition 2.1]{NWW}}]
\label{cone_cond}  
If $\stardom$ is bounded, star shaped with respect to $B(x_c,\inrad)$
and contained in $B(x_c,R)$, then every $x\in \stardom$ is the vertex
of a cone $C_\stardom \subset \stardom$ having radius $\inrad$ and aperture
$\,\theta :=2\arcsin\left(\frac{\inrad}{2R}\right)$. 
\end{proposition}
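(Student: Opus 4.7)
\smallskip

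\noindent\textbf{Plan.} The statement is purely geometric, and the plan is to reduce it to a one--variable trigonometric inequality.

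Fix $x \in \stardom$. Because $\stardom$ is star shaped with respect to $B(x_c,\inrad)$, the convex hull $\mathrm{conv}\bigl(\{x\} \cup B(x_c,\inrad)\bigr)$ is already contained in $\stardom$. Consequently, it suffices to construct a cone with vertex $x$, radius $\inrad$, and the prescribed aperture $\theta = 2\arcsin\bigl(\inrad/(2R)\bigr)$ that sits inside this convex hull. I will take the axis of the cone to point from $x$ toward $x_c$; the case $x=x_c$ is immediate because any such cone of length $\inrad$ lies inside $B(x_c,\inrad)\subset\stardom$. So assume $x\neq x_c$, set $d := |x-x_c|\le R$, let $\xi:=(x_c-x)/d$, and write $\varphi := \theta/2 = \arcsin\bigl(\inrad/(2R)\bigr)$.

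A generic point of the candidate cone is $p=x+t\xi'$ with $0\le t \le \inrad$ and with unit vector $\xi'$ satisfying $\angle(\xi,\xi')=\alpha \le \varphi$. Such a $p$ lies in $\mathrm{conv}\bigl(\{x\} \cup B(x_c,\inrad)\bigr)$ precisely when the ray $\{x+u\xi':u\ge 0\}$ meets $B(x_c,\inrad)$ at some point at distance $\ge t$ from $x$. Elementary geometry (perpendicular from $x_c$ to the ray, distance $d\sin\alpha$, etc.) gives the far intersection at
\[
d_2(\alpha) \;=\; d\cos\alpha + \sqrt{\inrad^{2} - d^{2}\sin^{2}\alpha},
\]
provided $d\sin\alpha\le \inrad$; this latter condition is automatic because $d\sin\alpha \le R\sin\varphi = \inrad/2$. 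Since $d_2$ is decreasing on $[0,\varphi]$, the only thing left to prove is the key inequality
\begin{equation}\label{keyineq}
d\cos\varphi + \sqrt{\inrad^{2} - d^{2}\sin^{2}\varphi} \;\ge\; \inrad.
\end{equation}

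The plan for \eqref{keyineq} is a two--case argument. If $d\cos\varphi \ge \inrad$, \eqref{keyineq} is trivial. Otherwise, $\inrad - d\cos\varphi > 0$, and squaring both sides reduces \eqref{keyineq} to the equivalent statement $2\inrad \cos\varphi \ge d$. This is where the specific choice $\sin\varphi = \inrad/(2R)$ is used: since $R\ge \inrad$ (as $B(x_c,\inrad)\subset\stardom\subset B(x_c,R)$) we have $\sin\varphi \le 1/2$, hence $\varphi\le \pi/6$ and $\cos\varphi \ge 1/\sqrt{2}$. Therefore $2\inrad\cos\varphi \ge \inrad/\cos\varphi$, while the hypothesis $d\cos\varphi<\inrad$ of the second case gives $d < \inrad/\cos\varphi$; combining the two inequalities yields $d < 2\inrad\cos\varphi$, as needed.

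The entire argument is elementary; the only delicate point, and hence the main obstacle, is verifying that the exact value $\theta = 2\arcsin(\inrad/(2R))$ --- rather than the ``obvious'' guess $2\arcsin(\inrad/R)$ coming from tangent lines to the ball --- is the correct threshold that makes \eqref{keyineq} hold for \emph{every} admissible $d\in(0,R]$. The case split above is how I would navigate it: writing $d\le R$ everywhere would be too crude, but the dichotomy $d\cos\varphi \gtrless \inrad$ combined with the bound $\varphi \le \pi/6$ precisely captures the trade--off.
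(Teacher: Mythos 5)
Your argument is correct.  The paper does not reprove this proposition--it is imported verbatim from \cite[Proposition~2.1]{NWW}--so there is no in-text proof to compare against, but the reduction you use (point the cone axis from $x$ toward $x_c$, observe that the cone lies inside $\mathrm{conv}(\{x\}\cup B(x_c,\inrad))\subset\stardom$, and reduce to checking $d_2(\varphi)\ge\inrad$) is exactly the standard elementary-geometry route for this result, and all the steps check out: $d_2$ is indeed decreasing on $[0,\varphi]$; the discriminant is automatically nonnegative because $d\sin\alpha\le R\sin\varphi=\inrad/2$; the dichotomy on $d\cos\varphi\gtrless\inrad$ cleanly disposes of the square-root; and in the nontrivial case the chain
\[
d<\frac{\inrad}{\cos\varphi}\le 2\inrad\cos\varphi
\]
uses only $\cos\varphi\ge 1/\sqrt2$, which follows from $\sin\varphi=\inrad/(2R)\le 1/2$ (since $\inrad\le R$).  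You also handled the degenerate case $x=x_c$ separately, which closes the one loose end.  In short, this is a complete and correct self-contained proof of the cited proposition, taking what is essentially the expected approach.
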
 

This proposition also implies that the chunkiness parameter for
$\stardom$ is bounded in terms of the aperture:
\[
\gamma \le \frac{2R}{r} = \csc(\theta/2).
\]

\begin{proposition}[{\cite[Proposition 3.5]{HNW}}]
\label{stardom_est}  
  Let $\stardom\subset \reals^d$ be as above,
  $m\in \nats$ and $\sfp\in \reals$, $1\le \sfp \le \infty$. Assume
  $m>d/\sfp$ when $\sfp>1$, and $m\ge d$, for $\sfp=1$. If $u\in
  W_\sfp^m(\stardom)$ satisfies $u|_X=0$, where
  $X=\{x_1,\ldots,x_N\}\subset \stardom$ and if $h=h_X\le
  \frac{d_\stardom}{16m^2\gamma^2}$, then
\begin{equation}
\label{p_bound_W_k_u}
|u|_{W_\sfp^k(\stardom)} \le C_{m,d,\sfp} \gamma^{d+2k} d_\stardom^{m-k}  
|u|_{W_\sfp^m(\stardom)}
\end{equation}
\begin{equation}
\label{infty_bnd_u}
\|u\|_{L_\infty(\stardom)} \le C_{m,d,\sfp} \gamma^{d} 
d_\stardom^{m-d/\sfp} |u|_{W_\sfp^{m}(\stardom)}.
\end{equation} 

\end{proposition}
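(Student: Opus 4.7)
The plan is to combine a Bramble--Hilbert polynomial approximation for star-shaped domains with a norming-set argument that exploits the vanishing of $u$ on the dense set $X$. Using the averaged Taylor polynomial construction of Brenner--Scott over the inscribed ball $B(x_c,\inrad)$ produces a polynomial $q$ of degree less than $m$, depending linearly on $u$, for which
$$
|u-q|_{W_\sfp^k(\stardom)} \le C_{m,d}\, \gamma^{k}\, d_\stardom^{m-k}\, |u|_{W_\sfp^m(\stardom)},\qquad 0\le k\le m.
$$
Sobolev embedding on $\stardom$ (valid since $m>d/\sfp$, or $m\ge d$ for $\sfp=1$) combined with the interior cone condition of Proposition~\ref{cone_cond} upgrades this to the pointwise bound $\|u-q\|_{L_\infty(\stardom)}\le C_{m,d,\sfp}\,\gamma^{d}\, d_\stardom^{m-d/\sfp}\,|u|_{W_\sfp^m(\stardom)}$.

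The next step is to transfer this estimate from $u-q$ to $q$ itself by exploiting the zeros. Since $u|_X=0$, one has $|q(x_j)|\le \|u-q\|_{L_\infty(\stardom)}$ for every $x_j\in X$, so it suffices to prove a norming inequality $\|q\|_{L_\infty(\stardom)}\le 2\|q\|_{L_\infty(X)}$. To obtain it, fix $x^*\in\stardom$ realising the supremum of $|q|$ and take a small cone $C_{x^*}\subset \stardom$ of radius $\eta$ anchored at $x^*$, guaranteed by Proposition~\ref{cone_cond} as long as $\eta\le \inrad$. The aperture of $C_{x^*}$ is $\theta\asymp 1/\gamma$, so $C_{x^*}$ contains an inscribed ball of radius $\sim\eta/\gamma$; choosing $\eta\sim m^2\gamma h$ makes that ball large enough to contain some $x_j\in X$, and the classical Markov inequality for polynomials of degree $<m$ on the inscribed ball gives $|q(x^*)-q(x_j)|\le \tfrac12 \|q\|_{L_\infty(\stardom)}$. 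Taking a supremum yields the norming inequality, and the $L_\infty$ bound of the proposition follows by the triangle inequality applied to $u=(u-q)+q$.

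For the Sobolev seminorm inequality, I would invoke a Markov--Bernstein-type inequality on $\stardom$: iterating the ball Markov inequality on the inscribed ball of the full cone at each point of $\stardom$ gives
$$
|q|_{W_\sfp^k(\stardom)}\le C_{m,d,\sfp}\,\gamma^{2k}\, d_\stardom^{-k+d/\sfp}\,\|q\|_{L_\infty(\stardom)},
$$
where each derivative costs a factor $\sim \gamma/d_\stardom$, hence $\gamma^{2k}$ in total. Substituting the $L_\infty$ bound for $q$ produces the factor $\gamma^{d+2k}$ announced in the proposition, and adding the Bramble--Hilbert estimate via $|u|_{W_\sfp^k}\le |u-q|_{W_\sfp^k}+|q|_{W_\sfp^k}$ completes the argument.

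The main obstacle is the simultaneous choice of the cone radius $\eta$ in the norming step: $\eta$ must be large enough for the inscribed ball in $C_{x^*}$ to capture a zero of $u$, which forces $\eta/\gamma\gtrsim h$, yet small enough that the Markov constant $Cm^2/(\eta/\gamma)$ guarantees $|q(x^*)-q(x_j)|\le \tfrac12\|q\|_{L_\infty(\stardom)}$. These two constraints conspire with the inequality $\inrad\ge d_\stardom/(2\gamma)$ to produce precisely the hypothesis $h\le d_\stardom/(16m^2\gamma^2)$; one factor of $\gamma$ comes from the cone-to-ball conversion and a second from the relation between $\inrad$ and $d_\stardom$. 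Tracking these $\gamma$ powers carefully through both estimates, rather than proving any individual step, is the principal bookkeeping difficulty.
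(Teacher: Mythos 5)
Your overall architecture is the standard one for this result (which the paper does not prove here; it imports it from \cite[Prop.~3.5]{HNW} and its antecedents in \cite{NWW}): approximate $u$ by an averaged Taylor polynomial $q$ of degree $<m$ and use Bramble--Hilbert bounds with tracked chunkiness dependence, use $u|_X=0$ to get $\|q\|_{\ell_\infty(X)}\le\|u-q\|_{L_\infty(\stardom)}$, prove a norming inequality $\|q\|_{L_\infty(\stardom)}\le 2\|q\|_{\ell_\infty(X)}$ via the cone condition and Markov's inequality, and finish with a Markov--Bernstein bound for $|q|_{W_\sfp^k(\stardom)}$. That is the same route as the cited proof, and the way you assemble the powers of $\gamma$ and $d_\stardom$ at the end is consistent with (\ref{p_bound_W_k_u}) and (\ref{infty_bnd_u}).

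However, the norming step as you execute it does not close, and this is the heart of the proposition. You shrink the cone at $x^*$ to radius $\eta\sim m^2\gamma h$ so that its inscribed ball (radius $\sim\eta/\gamma\sim m^2h$) captures a zero $x_j$. But then $|x^*-x_j|$ is itself of order $\eta$, comparable to the scale on which you invoke Markov, and any honest version of the estimate (univariate Markov along the chord of your small cone, or gradient bounds on the inscribed ball) gives only $|q(x^*)-q(x_j)|\lesssim m^2\,|x^*-x_j|\,\frac{\gamma}{\eta}\,\|q\|_{L_\infty(\stardom)}\sim m^2\gamma\,\|q\|_{L_\infty(\stardom)}$: the mesh norm cancels, so no hypothesis on $h$ can force this below $\tfrac12\|q\|_{L_\infty(\stardom)}$. (Also, Markov on the inscribed ball alone cannot control $q(x^*)-q(x_j)$, since the vertex $x^*$ and most of the segment $[x^*,x_j]$ lie outside that ball.) The repair is to decouple the two length scales: keep the \emph{full} cone of radius $\inrad\ge d_\stardom/\gamma$ and aperture $\theta$ with $\sin(\theta/2)\ge 1/\gamma$ furnished by Proposition~\ref{cone_cond}; place a ball of radius $h$ centered on the cone axis at distance $t=h/\sin(\theta/2)\le\gamma h$ from the vertex (it lies in the cone, hence contains some $x_j\in X$, and $|x^*-x_j|\le 2\gamma h$); then apply the \emph{univariate} Markov inequality to $q$ along the chord of the full cone through $x^*$ and $x_j$, which has length at least $\inrad$. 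This yields $|q(x^*)-q(x_j)|\le 2\gamma h\cdot\frac{2(m-1)^2}{\inrad}\,\|q\|_{L_\infty(\stardom)}\le\frac{4m^2\gamma^2h}{d_\stardom}\,\|q\|_{L_\infty(\stardom)}$, and it is exactly the hypothesis $h\le d_\stardom/(16m^2\gamma^2)$ that makes this $\le\tfrac14\|q\|_{L_\infty(\stardom)}$. The same fixed-scale cone argument (directional Markov estimates in a spanning set of directions within the aperture) is what legitimately produces your Bernstein-type bound with $\gamma^{2k}d_\stardom^{-k}$; as written, your accounting of ``$\gamma/d_\stardom$ per derivative'' would only give $\gamma^{k}$, so that bookkeeping also needs the extra aperture factor per derivative.
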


Our next task is to obtain Sobolev bounds for the domain
$\Omega\subset \RR^d$ that are similar those in
(\ref{p_bound_W_k_u}). The idea is to cover $\Omega$ with star-shaped
domains. To do that, we will use a construction due to Duchon
\cite{D2}. With $R_0$, $2\varphi$ being the radius and aperture for the
cone $C_\Omega$, and $0<R\le R_0$, let
\begin{equation}  
\label{defs_r_Tr}  
r := 2RF(\varphi),\ \text{where }F(\varphi) 
:=\frac{\sin(\varphi)}{4(1+ \sin(\varphi))}, \ \mbox{and }  
T_r :=\left\{t\in \frac{2r}{\sqrt{d}}\ints^d \colon  
  B(t,r) \subset \Omega \right\}.
\end{equation}  
For $t\in T_r$, let $\stardom_t$ be the set of all $x\in \Omega$ such
that the closed convex hull of $\{x\}\cup B(t,r)$ is contained in
$\Omega\cap B(t,R)$. From \cite[Lemma~2.11]{NWW}, we have that each
$\stardom_t$ is star shaped with respect to the ball $B(t,r)$, and
satisfies $B(t,r)\subseteq \stardom_t\subseteq \Omega \cap B(t,R)$,
$d_{\stardom_t}<2R$. Because $2R/r = 1/F(\varphi)$, the aperture for
$C_{\stardom_t}$ is
\[
\theta=2\arcsin(1/F(\varphi)),
\]
and the chunkiness parameter $\gamma_t$ for $\stardom_t$ is uniformly
bounded:
\begin{equation}
\label{gamma_t_bnd}
2\le \gamma_t < \frac{2R}{r} = \frac{1}{F(\varphi)}.
\end{equation}
We also have that $\Omega = \bigcup_{t\in T_r}\stardom_t$, that
$\#T_r< C_{d}\,\mathrm{vol}(\Omega) (F(\varphi)R)^{-d}$, and that
$\mathrm{vol}(\stardom_t)\le C_d R^d$.

The integer-valued simple function $\sum_{t\in T_r}\chi_{B(t,R)}(x)$
is the number of $B(t,R)$'s that contain $x$. This is easily bounded
above by $ M_{d,\varphi}$, maximum number of such balls intersecting a
fixed one, say $B(0,R)$. A little geometry shows that
\[
M(d,\varphi) \le (2R/r+1)^d\le 2^d/(F(\varphi)^d
\] 
Note that the existence of $M_{d,\varphi}$ implies that for any
function $f$ in $L_1(\Omega)$ we have
 \begin{equation}
 \label{sum_t_bound}
 \sum_t \int_{\stardom_t}|f(x)|dx = \int_\Omega \sum_t
 \chi_{\stardom_t}(x) 
 |f(x)|dx \le M_{d,\varphi}\int_\Omega |f(x)|dx \le (
 2^d/(F(\varphi)^d) 
 \int_\Omega |f(x)|dx.
\end{equation}

\begin{lemma}\label{uniform_t}
  Suppose that $h=h_{X,\Omega}$ satisfies $h\le \frac{R}{8
    m^2}F(\varphi)^2$, then (\ref{p_bound_W_k_u}) and
  (\ref{infty_bnd_u}) hold uniformly in $t$ for $\stardom_t$, provided
  $\gamma_t$ and $d_{\stardom_t}$ are replaced by $1/F(\varphi)$ and
  $2R$, respectively.
\end{lemma}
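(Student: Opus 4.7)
The plan is to apply Proposition~\ref{stardom_est} separately to each of the star-shaped subdomains $\stardom_t$ of the Duchon cover of $\Omega$, and then absorb all $t$-dependence by inserting the uniform ceilings $\gamma_t\le 1/F(\varphi)$ from (\ref{gamma_t_bnd}) and $d_{\stardom_t}\le 2R$ from the discussion after (\ref{defs_r_Tr}).

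First, for each $t\in T_r$ I would work with the restricted zero set $X_t := X\cap\stardom_t$ and verify the density hypothesis $h_{X_t,\stardom_t}\le d_{\stardom_t}/(16m^2\gamma_t^2)$. Given any $y\in\stardom_t$, star-shapedness with respect to $B(t,r)$ implies that the ball $B(y_s,sr)$ centered at $y_s := (1-s)y + st$ is contained in $\stardom_t$ for every $s\in[0,1]$. Picking $s:=h/r$ (which is admissible since the hypothesis $h\le RF(\varphi)^2/(8m^2)$ forces $h\le r$) yields a point with $B(y_s,h)\subset\stardom_t$; the assumption $h_{X,\Omega}\le h$ then furnishes some $x\in X$ with $|x-y_s|\le h$, and this $x$ must lie in $\stardom_t$, so $x\in X_t$. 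The triangle inequality gives
\[
|x-y|\le h\left(1+\frac{|t-y|}{r}\right)\le h\left(1+\frac{2R}{r}\right),
\]
which is controlled by a constant multiple of $h/F(\varphi)$.

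Next, using the chunkiness bound $\gamma_t\le d_{\stardom_t}/r$ (valid because $\stardom_t$ is star-shaped with respect to a ball of radius at least $r$), together with $d_{\stardom_t}\le 2R$ and $r=2RF(\varphi)$, I would estimate
\[
\frac{d_{\stardom_t}}{16m^2\gamma_t^2}\ge\frac{r^2}{16m^2\,d_{\stardom_t}}\ge\frac{r^2}{32m^2R}=\frac{RF(\varphi)^2}{8m^2},
\]
showing that the threshold $h\le RF(\varphi)^2/(8m^2)$ is tailored to make Proposition~\ref{stardom_est}'s density hypothesis close. That proposition then applies on each $\stardom_t$ and delivers the inequalities (\ref{p_bound_W_k_u}) and (\ref{infty_bnd_u}) with the actual parameters $\gamma_t$ and $d_{\stardom_t}$ in place.

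The conclusion then follows by observing that all the exponents $d+2k$, $d$, $m-k$, and $m-d/\sfp$ appearing in those estimates are nonnegative (from $m\ge k$ and $m>d/\sfp$), so replacing $\gamma_t$ by its uniform upper bound $1/F(\varphi)$ and $d_{\stardom_t}$ by $2R$ only enlarges the right-hand sides, producing constants that no longer depend on $t$. The main technical obstacle is the density-transfer step: making sure that $X$ being $h$-dense in $\Omega$ forces $X\cap\stardom_t$ to be appropriately dense in each $\stardom_t$ despite the possibility that boundary points of $\stardom_t$ may be close only to zeros lying in $\Omega\setminus\stardom_t$. The construction of $\stardom_t$ around a fixed inscribed ball $B(t,r)$ is exactly what lets the segment-shifting argument pull a zero into $\stardom_t$, and this is precisely why the Duchon cover is set up this way.
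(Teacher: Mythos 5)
Your plan matches the paper's: apply Proposition~\ref{stardom_est} on each $\stardom_t$ in the Duchon cover and then replace $\gamma_t$ and $d_{\stardom_t}$ by their uniform ceilings $1/F(\varphi)$ and $2R$. The difference is that you actually unpack the density-transfer step, which the paper disposes of with a one-line assertion (``$h_{\stardom_t\cap X}\le h$'', justified only by observing $B(t,r)\cap X\ne\emptyset$). Your segment-shifting argument -- slide $y$ toward the center $t$ by $s=h/r$, use star-shapedness to place a ball $B(y_s,h)\subset\stardom_t$, and locate a zero there -- is the honest way to pull a nearby zero \emph{into} $\stardom_t$, and it is correct.

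The gap is quantitative and you stop just short of it. Your construction delivers a meshnorm for $X\cap\stardom_t$ over $\stardom_t$ of at most $h\left(1+\frac{|t-y|}{r}\right)\le h\left(1+\frac{1}{2F(\varphi)}\right)$, which is strictly larger than $h$. Your own computation shows that the threshold $\frac{d_{\stardom_t}}{16m^2\gamma_t^2}$ in Proposition~\ref{stardom_est} is bounded below only by $\frac{RF(\varphi)^2}{8m^2}$, which is exactly the stated ceiling on $h$. Hence the density hypothesis of the proposition requires
\[
h\left(1+\frac{1}{2F(\varphi)}\right)\le \frac{RF(\varphi)^2}{8m^2},
\]
which does \emph{not} follow from the stated hypothesis $h\le \frac{RF(\varphi)^2}{8m^2}$, since the prefactor exceeds $1$. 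You flag this by saying the threshold is ``close,'' but ``close'' does not license the conclusion ``that proposition then applies.'' The fix is painless -- strengthen the assumption on $h$ to absorb the extra factor, e.g.\ $h\le \frac{RF(\varphi)^3}{8m^2}$ (using $1+\frac{1}{2F(\varphi)}\le\frac1{F(\varphi)}$), and track the corresponding adjustment in Theorem~\ref{omega_euclidean_lip} where $R$ is chosen proportional to $h/F(\varphi)^2$. Until you do so, the step invoking Proposition~\ref{stardom_est} is not supported by the stated hypothesis.
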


\begin{proof}
The mesh norm for $\Omega$ satisfies 
\[
h \le m^{-2}(\underbrace{2RF(\varphi)}_r)\times (F(\varphi)/16)<r,
\]
since $F(\varphi)<1$. It follows that $B(x_c,r)\cap X\neq \emptyset$,
and so $\stardom_t\cap X$ contains at least one point of $X$. From
this we have that $h_{\stardom_t\cap X} \le h$. The lemma then follows
from the bound on $h$ being less than the one required in
Proposition~\ref{stardom_est}.
\end{proof}

We wish to prove the following result, which differs from an earlier
result in \cite[Theorem~2.12]{NWW} in that it applies to cases in
which the index $k\le m-1$, as opposed to $k< m-n/p$. 

\begin{theorem}[{\bf Euclidean Case}]
  \label{omega_euclidean_lip}
  Suppose that $\Omega$ is a Lipschitz domain obeying a cone
  condition, where the cone $C_\Omega$ has radius $R_0$ and aperture
  $2\varphi$.  Let $k$, $m$, and $p$ be as in
  Proposition~\ref{stardom_est}, and and let $X\subset \Omega $ be a
  discrete set with mesh norm $h$ satisfying
\begin{equation}
\label{h_cond_Omega}
h< \frac{R_0}{8 m^2}F(\varphi)^2.
\end{equation}
If $u\in W_p^m(\Omega)$ satisfies $u|_X=0$,
then
\begin{equation}
\label{p_bound_W_k_u_omega}
|u|_{W_\sfp^k(\Omega)} \le \frac{2^{d/\sfp}(4m)^{2m-2k}
  C_{m,d,\sfp}}{F(\varphi)^{2m+d+d/\sfp}}
h^{m-k}|u|_{W_\sfp^m(\Omega)} 
\end{equation}
and
\begin{equation}\label{infty_bound_u_omega}
\|u\|_{L_\infty(\Omega)}
\le \frac{(4m)^{2m-2d/\sfp}
  C_{m,d,\sfp}}{F(\varphi)^{2m+d-2d/\sfp}}
h^{m-d/\sfp}|u|_{W_\sfp^m(\Omega)}.
\end{equation} 
\end{theorem}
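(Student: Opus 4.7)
The plan is to use Duchon's covering of $\Omega$ by the star-shaped sub-domains $\stardom_t$, $t \in T_r$, already introduced in equation (\ref{defs_r_Tr}), apply the local estimates of Proposition~\ref{stardom_est} (as packaged uniformly by Lemma~\ref{uniform_t}) on each piece, and then reassemble into a global bound via the bounded-overlap inequality (\ref{sum_t_bound}). The key parameter choice will be to let $R$ be dictated by the mesh norm: set $R := 8m^2 h F(\varphi)^{-2}$. The hypothesis (\ref{h_cond_Omega}) is precisely what makes $R \le R_0$, so the cone condition with this radius is still in force, and the hypothesis $h \le R F(\varphi)^2/(8m^2)$ of Lemma~\ref{uniform_t} holds with equality.

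For the seminorm bound (\ref{p_bound_W_k_u_omega}), I will apply Lemma~\ref{uniform_t} on each $\stardom_t$ to obtain
\[
|u|_{W_\sfp^k(\stardom_t)}^\sfp \le \bigl(C_{m,d,\sfp}\bigr)^\sfp F(\varphi)^{-(d+2k)\sfp}(2R)^{(m-k)\sfp}|u|_{W_\sfp^m(\stardom_t)}^\sfp,
\]
using $\gamma_t \le 1/F(\varphi)$ from (\ref{gamma_t_bnd}) and $d_{\stardom_t}\le 2R$. Next I sum over $t\in T_r$: since $\Omega=\bigcup_t\stardom_t$, the left-hand sides dominate $|u|_{W_\sfp^k(\Omega)}^\sfp$, while on the right the overlap bound (\ref{sum_t_bound}) gives $\sum_t |u|_{W_\sfp^m(\stardom_t)}^\sfp \le 2^d F(\varphi)^{-d}|u|_{W_\sfp^m(\Omega)}^\sfp$. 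Substituting $2R = 16 m^2 h F(\varphi)^{-2}$ and collecting powers of $F(\varphi)$ (namely $-(d+2k)$ from $\gamma_t$, $-2(m-k)$ from $(2R)^{m-k}$, and $-d/\sfp$ from the overlap factor after taking $\sfp$-th roots) yields exactly the exponent $-(2m+d+d/\sfp)$ and the constant $2^{d/\sfp}(4m)^{2m-2k}C_{m,d,\sfp}$, as claimed. The case $\sfp=\infty$ is handled in parallel by replacing sums-of-$\sfp$-th-powers by maxima; the $2^{d/\sfp}$ factor becomes $1$ and no overlap estimate is needed.

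For the $L_\infty$ bound (\ref{infty_bound_u_omega}), the argument is simpler: apply the second conclusion of Lemma~\ref{uniform_t} on each $\stardom_t$ to get
\[
\|u\|_{L_\infty(\stardom_t)} \le C_{m,d,\sfp}F(\varphi)^{-d}(2R)^{m-d/\sfp}|u|_{W_\sfp^m(\stardom_t)} \le C_{m,d,\sfp}F(\varphi)^{-d}(2R)^{m-d/\sfp}|u|_{W_\sfp^m(\Omega)}.
\]
Since $\|u\|_{L_\infty(\Omega)} = \sup_t \|u\|_{L_\infty(\stardom_t)}$, the right-hand side is a bound on the left. Substituting $2R=16m^2hF(\varphi)^{-2}$ and combining exponents gives the constant $(4m)^{2m-2d/\sfp}C_{m,d,\sfp}$ and the $F(\varphi)$ exponent $-(2m+d-2d/\sfp)$ of (\ref{infty_bound_u_omega}).

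The only real subtlety is bookkeeping: one must verify that the chosen $R$ satisfies $R\le R_0$ (this is (\ref{h_cond_Omega})) and that the mesh-norm hypothesis of Proposition~\ref{stardom_est} holds for every $\stardom_t$ simultaneously (this is the content of Lemma~\ref{uniform_t}, which already handles the worry that some $\stardom_t$ might contain no points of $X$ by exploiting that the interior ball $B(t,r)$ has radius $r>h$). Once these are in place, the rest is bookkeeping of constants; there is no further analytic content beyond the already established local estimate and the Duchon covering lemma.
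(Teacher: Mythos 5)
Your proposal is correct and follows essentially the same route as the paper's own proof: choose $R = 8m^2 h F(\varphi)^{-2}\le R_0$ so that the hypothesis of Lemma~\ref{uniform_t} is met, apply the uniform local bounds on each Duchon domain $\stardom_t$, and assemble via the overlap estimate (\ref{sum_t_bound}) (respectively a supremum over $t$ for the $L_\infty$ bound). The bookkeeping of the powers of $F(\varphi)$ and of $(4m)$ matches the constants in (\ref{p_bound_W_k_u_omega}) and (\ref{infty_bound_u_omega}), so nothing further is needed.
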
  
  
\begin{proof}
  Given $h$, choose $R = 8 m^2h/F(\varphi)^2<R_0$.  Applying
  Lemma~\ref{uniform_t} and Proposition~\ref{stardom_est} to the
  domain $\stardom_t$ then results in the bound
\[
|u|_{W_\sfp^k(\stardom_t)} \le
\frac{(4m)^{2m-2k}C_{m,d,\sfp}}{F(\varphi)^{d+2m}} h^{m-k}
|u|_{W_\sfp^m(\stardom_t)}
\]
We will follow the proof in \cite[Theorem~2.12]{NWW}. Summing over $t$
on both sides of the previous inequality, using $\Omega=\cup_t
\stardom_t$ and applying (\ref{sum_t_bound}), we have that
\[
|u|^\sfp_{W_\sfp^k(\Omega)} \le
\left(\frac{(4k)^{2m-2k}C_{m,d,\sfp}h^{m-k}}{F(\varphi)^{d+2m}}\right)^\sfp
( 2^d/(F(\varphi)^d) |u|^\sfp_{W_\sfp^m(\Omega)},
\]
from which (\ref{p_bound_W_k_u_omega}) is immediate. The bound on
$\|u\|_{L_\infty(\Omega)}$ follows similarly.
\end{proof}

\subsection{Lipschitz domains in $\M$}

A domain $\Omega$ on a smooth, compact Riemannian manifold $\M$
satisfies an interior cone condition if there is a cone $C\subset
\RR^n$ with center $0$, aperture $2\varphi$, and radius $R$ such that,
with some orientation of $C$, $\Exp_p: C\to C_p \subset \Omega$. That
is, the image of the fixed cone $C$ is a geodesic cone $C_p$ contained
in $\Omega$. In addition, $\Omega$ satisfies the \emph{uniform cone
  condition} if, for every $p_0\in \partial \Omega$ and some
orientation of $C$, $\Exp_p(C\setminus \{0\}) \subseteq \Omega$ for all $p\in
\b(p_0,r)\cap \overline{\Omega}$. Finally, $\Omega$ is said to be
\emph{locally strongly Lipschitz} \cite{mitrea-taylor-99-1,
  hofmann-mitrea-taylor-07-1} if for every $p_0\in \partial \Omega$
there is a local chart $(U,\psi)$, $\psi:U\to \RR^n$,with
$\psi(p_0)=0$, a Lipschitz function $\lambda: \RR^{n-1}\to \RR$, with
$\lambda(0)=0$, and an $\varepsilon>0$ such that
\[
\psi(U\cap \Omega)=\{(x',\lambda(x')+t): 0<t<\varepsilon, x'\in
\RR^{n-1}, |x'|<\varepsilon\}.
\] 

Our approach to a manifold analogue of
Theorem~\ref{omega_euclidean_lip} is to employ a set of points for
$\M$ that are similar to those described in (\ref{defs_r_Tr}). The set
that we need is described and studied in \cite[\S
3]{grove-petersen-1988-1}. Let $\varepsilon>0$. There exists an
ordered set of points $\{p_1,\ldots,p_N\}\subset \M$ such that the
$\cup_{j=1}^N \b(p_j,\varepsilon) = \M$ and such that the balls
$\b(p_j,\varepsilon/2)$ are disjoint. Such a set is called a
\emph{minimal $\varepsilon$-net} in $\M$.%
\footnote{An $\varepsilon$-net is a set of points $X =\{p_1, \dots, p_N\}$  
for which $\bigcup \b(p_j, \varepsilon)$ covers $\M$ -- in other words, 
for which $h(X,\M) \le \varepsilon$. Likewise, a \emph{minimal $\varepsilon$-net} 
is quasiuniform, with separation distance $q\ge \epsilon/2$ and  mesh ratio $h/q\le2$.}
 It has the following two
important properties: First, there is a number $N_1=N_1(\varepsilon,
\M)$ for which $N\le N_1$. Second, there exists an integer
$N_2=N_2(\M)\ge 1$ such that for any $p\in\M$ the ball
$\b(p,\varepsilon)$ \emph{intersects at most} $N_2$ of the balls
$\b(p_j,\varepsilon)$. It is remarkable that $N_2$ is
\emph{independent} of $\varepsilon$ and, in fact, depends only on
general properties of $\M$ itself. We will need a slightly stronger
version of this result.

\begin{lemma}\label{eps_net_intersect_card}
  Let $\{p_1,\ldots,p_N\}$ be a minimal $\varepsilon$-set, $p\in \M$, and let
  $1\le \alpha$. Suppose $\varepsilon\le d_\M/\alpha$, where $d_\M$ is
  the diameter of $\M$. Then the cardinality $s:=\# \{p_j \colon
  \b(p,\alpha\varepsilon)\cap \b(p_j,\varepsilon)\ne \emptyset\}\le
  (4\alpha+1)^d e^{\frac{3(d-1)}{\sqrt{|\kappa|}}d_\M}$.
 \end{lemma}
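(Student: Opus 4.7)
The plan is a classical volume-packing argument combined with Bishop--Gromov volume comparison. The idea is to exploit the pairwise disjointness of the small balls $\b(p_j,\varepsilon/2)$ (which is part of the defining property of a minimal $\varepsilon$-net) and compare their total volume to that of a single enclosing geodesic ball around $p$.

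First I would verify the basic geometric containment. If $\b(p,\alpha\varepsilon)\cap\b(p_j,\varepsilon)\ne\emptyset$ then the triangle inequality forces $\d(p,p_j)<(\alpha+1)\varepsilon$, and consequently $\b(p_j,\varepsilon/2)\subset\b(p,(2\alpha+3)\varepsilon/2)$. Summing over the $s$ centers $p_j$ in question and using the disjointness then yields
\[
s\,\min_j\mathrm{vol}\bigl(\b(p_j,\varepsilon/2)\bigr)\;\le\; \mathrm{vol}\bigl(\b(p,(2\alpha+3)\varepsilon/2)\bigr).
\]

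Next I would convert this volume inequality into a count bound using Bishop--Gromov. Under a lower Ricci (or sectional) curvature bound on $\M$ controlled by $|\kappa|$, the ratio $r\mapsto\mathrm{vol}(\b(q,r))/V_\kappa(r)$ is non-increasing in $r$ for every $q\in\M$, where $V_\kappa(r)$ denotes the volume of a geodesic ball of radius $r$ in the simply connected space form of constant sectional curvature $-|\kappa|$. Applying this at each $q=p_j$ with an outer radius $R$ chosen so that $\b(p_j,R)$ contains $\b(p,(2\alpha+3)\varepsilon/2)$ produces $\mathrm{vol}(\b(p_j,\varepsilon/2))\ge(V_\kappa(\varepsilon/2)/V_\kappa(R))\,\mathrm{vol}(\b(p,(2\alpha+3)\varepsilon/2))$; the common enclosing-ball volume cancels to give $s\le V_\kappa(R)/V_\kappa(\varepsilon/2)$. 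Using the explicit formula $V_\kappa(r)=c_{d-1}|\kappa|^{-(d-1)/2}\int_0^r\sinh(\sqrt{|\kappa|}\,t)^{d-1}\,dt$ together with $\sinh(x)\ge x$ for the denominator and the monotonicity of $\sinh(x)/x$ combined with $\sinh(x)\le e^x$ for the numerator, this ratio splits into a polynomial factor $(2R/\varepsilon)^d$ and a hyperbolic factor bounded above by $\exp\bigl((d-1)\sqrt{|\kappa|}\,R\bigr)$.

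Finally, invoking the hypothesis $\varepsilon\le d_\M/\alpha$ to control $R$ by a multiple of $d_\M$, the polynomial factor delivers $(4\alpha+1)^d$ after a sufficiently tight choice of $R$, while the hyperbolic factor is dominated by an exponential of the form stated in the lemma. The main technical obstacle is matching the polynomial constant $(4\alpha+1)$ exactly---the naive triangle-inequality bound above produces $(2\alpha+3)^d$, and one must either refine the packing step (for instance by using balls of radius slightly less than $\varepsilon/2$, tailored to $\alpha$, or by applying Bishop--Gromov at $p$ rather than at each $p_j$) or absorb the small discrepancy into the exponential factor by means of the standing hypothesis $\varepsilon\le d_\M/\alpha$.
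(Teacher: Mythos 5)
Your overall strategy---disjointness of the half-radius balls $\b(p_j,\varepsilon/2)$, an enclosing ball obtained from the triangle inequality, Bishop--Gromov volume comparison, and then the elementary bounds $x\le \sinh x\le x e^{x}$ applied to the resulting ratio of $\sinh^{d-1}$ integrals---is the same as the paper's. The only structural difference is that the paper imports the packing/comparison step wholesale from Grove--Petersen (``mutatis mutandis,'' their Lemma~3.3), which hands it the ratio with numerator radius $(2\alpha+\tfrac12)\varepsilon$ over denominator radius $\varepsilon/2$; the final $\sinh$ estimate is then exactly yours, and it is precisely that numerator radius which produces $(4\alpha+1)^d$ and, via $\varepsilon\le d_\M/\alpha$, the factor $3d_\M$ in the exponent.

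The gap is in matching the stated constants, and neither of your proposed repairs closes it. Done correctly, your comparison must be applied at each $p_j$ (Bishop--Gromov monotonicity of $r\mapsto \mathrm{vol}(\b(q,r))/V_\kappa(r)$ concerns balls concentric about the same center $q$), and in order that $\b(p_j,R)$ contain the enclosing ball $\b\bigl(p,(\alpha+\tfrac32)\varepsilon\bigr)$ you are forced to take $R=(2\alpha+\tfrac52)\varepsilon$; this yields $s\le V_\kappa\bigl((2\alpha+\tfrac52)\varepsilon\bigr)/V_\kappa(\varepsilon/2)$, hence a polynomial factor $(4\alpha+5)^d$ and an exponential factor governed by $(2\alpha+\tfrac52)\varepsilon\le\tfrac92 d_\M$, i.e.\ \emph{both} factors exceed those in the statement, which uses $(2\alpha+\tfrac12)\varepsilon\le\tfrac52 d_\M\le 3d_\M$. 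Your quoted $(2\alpha+3)^d$ would come from applying the comparison ``at $p$ rather than at each $p_j$,'' but that step is not legitimate: monotonicity at $p$ gives no lower bound on $\mathrm{vol}(\b(p_j,\varepsilon/2))$, since those balls are not centered at $p$. Shrinking the packing radius below $\varepsilon/2$ only worsens the denominator, and the polynomial excess cannot be absorbed into the exponential factor, because that factor is independent of $\alpha$ and your exponent is already larger than the stated one. So your argument, made rigorous, proves the lemma with $(4\alpha+5)^d$ and $\tfrac92 d_\M$ in place of $(4\alpha+1)^d$ and $3d_\M$---a bound of the same form, which would in fact suffice for the later applications such as Theorem~\ref{omega_manf_continuous_est}---but to obtain the constants as stated you need the sharper packing count supplied by the Grove--Petersen argument, which is exactly what the paper invokes.
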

\begin{proof}
  The argument used in \cite[Lemma 3.3]{grove-petersen-1988-1} gives,
  \emph{mutatis mutandis},
\[
s\le
\frac{\int_0^{(2\alpha+\frac12)\varepsilon}\sinh^{d-1}(\sqrt{|\kappa
    |\,}t)dt}{\int_0^{\varepsilon/2} \sinh^{d-1}(\sqrt{|\kappa\,
    |}t)dt} =:H (\alpha,\varepsilon,\kappa) =
H(\alpha,\varepsilon/\sqrt{|\kappa|},1)
\]
where $(d-1)\kappa$ is a lower bound on the Ricci curvature of
$\M$. Making use of $1\le \sinh(x)/x\le e^x$, we see that
\[
d^{-1}x^d \le \int_0^x t^{d-1}dt \le \int_0^x \sinh^{d-1}(t)dt \le
d^{-1}x^d e^{(d-1)x},
\]
and consequently that
\[
H(\alpha,\varepsilon,\kappa) \le (4\alpha+1)^d
e^{(d-1)(2\alpha+\frac12)\varepsilon/\sqrt{|\kappa|}}\le (4\alpha+1)^d
e^{\frac{3(d-1)}{\sqrt{|\kappa|}}d_\M},
\]
which completes the proof.
\end{proof}

\begin{lemma} \label{cone_p_j} 
Let $R<\inj/3$, $\varphi\in (0,\pi/2]$ and 
$\varepsilon =  \frac{\Gamma_1R \sin(\varphi)}{2(1+\sin(\varphi))}$. 
If $\{p_1, \dots, p_N\}$ is an $\varepsilon$-set
and if $C_p$ is a geodesic cone with
center $p$, radius $R$, and angle $\varphi$, then for some
$j$ we have that $\b(p_j,\varepsilon)\subset C_p$.
\end{lemma}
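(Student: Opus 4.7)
My plan is to pull the geodesic cone back to $T_pM$ via $\Exp_p^{-1}$, fit an inscribed Euclidean ball there using elementary cone geometry, push the result forward using the metric equivalence (\ref{isometry}), and finally use the $\varepsilon$-net property to locate a suitable $p_j$. Throughout, the condition $R<\inj/3$ ensures that (\ref{isometry}) applies on $B(0,R)\subset T_pM$, so the exponential map is a bi-Lipschitz diffeomorphism there with constants $\Gamma_1,\Gamma_2$.

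First, write the geodesic cone as $C_p=\Exp_p(\tilde C)$, where $\tilde C\subset T_pM$ is the Euclidean cone with vertex $0$, unit axis vector $\mathbf t$, half-angle $\varphi$, and radius $R$. Put $r_0:=\frac{R\sin\varphi}{1+\sin\varphi}$ and $x_c:=(r_0/\sin\varphi)\mathbf t$. A direct check shows $B(x_c,r_0)\subset \tilde C$: the distance from $x_c$ to the lateral boundary is $|x_c|\sin\varphi=r_0$, and the spherical cap of radius $R$ is respected because $|x_c|+r_0=r_0(1+\sin\varphi)/\sin\varphi=R$.

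Next, set $q:=\Exp_p(x_c)\in C_p$. The lower bound in (\ref{isometry}) gives, for $y\in B(0,R)$, $\Gamma_1|y-x_c|\le d(\Exp_p(y),q)$, so any point whose image lies within geodesic distance $\Gamma_1 r_0$ of $q$ must pull back into $B(x_c,r_0)\subset \tilde C$. Hence $\b(q,\Gamma_1 r_0)\subset C_p$. By the hypothesis $\varepsilon=\Gamma_1 R\sin\varphi/(2(1+\sin\varphi))=\Gamma_1 r_0/2$, so in fact $\b(q,2\varepsilon)\subset C_p$. Finally, since $\{p_1,\dots,p_N\}$ is an $\varepsilon$-net, $\bigcup_j \b(p_j,\varepsilon)=\M$, and some index $j$ satisfies $q\in \b(p_j,\varepsilon)$, equivalently $d(q,p_j)\le\varepsilon$. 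The triangle inequality then yields $\b(p_j,\varepsilon)\subset \b(q,2\varepsilon)\subset C_p$, completing the proof.

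The only delicate point is Step~2, where the inscribed Euclidean ball must be transferred through $\Exp_p$ without shrinking below the radius $2\varepsilon$ needed to absorb an $\varepsilon$-ball around $p_j$; this is precisely calibrated by the factor $\Gamma_1/2$ in the definition of $\varepsilon$.
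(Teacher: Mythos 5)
Your proof is correct and follows essentially the same route as the paper's: inscribe the Euclidean ball $B(x_c,r_0)$ in the pulled-back cone, use the metric equivalence (\ref{isometry}) together with the $\varepsilon$-net covering, and conclude $\b(p_j,\varepsilon)\subset C_p$; the paper only differs cosmetically, locating $p_j$ near $\Exp_p(x_c)$ first and then running two pullback estimates in $T_pM$ ($|x_j-x_c|<r_0/2$ and $|x-x_j|<r_0/2$) where you instead use a single triangle inequality in $\M$. One step you leave implicit and should spell out is that every $z\in\b(q,\Gamma_1 r_0)$ satisfies $d(p,z)\le d(p,q)+d(q,z)<|x_c|+r_0=R$, so its pullback $\Exp_p^{-1}(z)$ genuinely lies in $B(0,R)$ and (\ref{isometry}) applies to the pair $(\Exp_p^{-1}(z),x_c)$.
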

\begin{proof}
We will work in normal coordinates on $T_pM$, where the cone $C$ has  
vertex at the origin and $e_n=(0,\ldots,1)$ is chosen to be along the axis of $C$. 
The largest Euclidean ball in $C$ has radius  
$\rho=R\sin(\varphi)/(1+\sin(\varphi))$ 
and center $x_c=(R-\rho)e_n$. 
It follows that any ball having its center a Euclidean distance $\rho/2$ from $x_c$ 
and having its radius less than $\rho/2$ is also contained in $C$. 
Let $p_c=\Exp(x_c)$. 
Since the balls
$\b(p_j,\varepsilon)$, $j=1,\ldots,N$, cover $\M$, we can find $p_j$
such that $p_c\in \b(p_j,\varepsilon)$. 

Let $x_j=\Exp_p^{-1}(p_j)$. 
Equation (\ref{isometry}) implies that 
$|x_c-x_j|\le \d(p_c,p_j)/\Gamma_1<\varepsilon/\Gamma_1=\rho/2$. 
Now  consider the ball $\b(p_j,\Gamma_1\rho/2)$. 
Let $q\in \b(p_j,\Gamma_1\rho/2)$ and let $x=\Exp_p^{-1}(q)$.
Applying equation (\ref{isometry}) then yields that 
$|x-x_j|\le \d(p,q)/\Gamma_1<\rho/2$, and consequently that 
$\b(p_j,r)\subset  C_p$, with 
$r\le \rho/2=\frac{\Gamma_1R  \sin(\varphi)}{2(1+sin(\varphi))}$.
\end{proof}


Our goal is now to cover $\Omega$ with domains analogous to those used
in the previous section. To that end, let  $R\le R_\Omega$,  fix
\begin{equation}
r := \frac{\Gamma_1R \sin(\varphi)}{2(1+\sin(\varphi))} = 2 F(\varphi) R
\end{equation}
and find a minimal $\varepsilon$-net (with $\varepsilon = r$) $\{p_1,\dots,p_N\}$
and set
$T_r := \{p_j\colon \b(p_j,r)\subset \Omega\}$. 
Because $\Omega$ obeys a uniform cone
condition, with radius $R_\Omega$ and angle $\varphi$,
Lemma~\ref{cone_p_j} implies that $T_r$ is nonempty.

Next, for each $p_j\in T_r$, let $\stardom_j$ be the set of
all $p\in \Omega\cap \b(p_j,R)$ such that the geodesic convex hull of
$\{p\}\cup \b(p_j,r)$ -- i.e., the set comprising all points on every
geodesic connecting $p$ to a point in $\b(p_j,r)$ -- is contained in
$\Omega\cap \b(p_j,R)$. Again by Lemma~\ref{cone_p_j}, for every $p\in
\Omega$ there is a $p_j\in T_r$ such that the geodesic cone $C_p$
contains $\b(p_j,r)$. Since this cone also contains the geodesic convex
hull of $\{p\}\cup \b(p_j,r)$, it follows that $p\in \stardom_j$ and,
hence, that $\Omega =\cup_{p_j\in T_r} \stardom_j$.

We claim that the domain $D_j:=\Exp_{p_j}^{-1}(\stardom_j)$ is star
shaped with respect to the Euclidean ball
$\b(\Exp_{p_j}^{-1}(p_j),r/\Gamma_2)$. To show this, we will need the
following lemma.

\begin{lemma}\label{min_ray2pt_dist}
  Let $p\in \M$, $\bfu,\bfv \in T_{p}M$ satisfy $|\bfu |_{p} =
  |\bfv|_{p} =1$, $\alpha :=\arccos(\langle \bfu,\bfv\rangle) \in
  (0,\pi]$. If $p_\rho=\Exp_{p}(\rho\bfu)$, so that
  $\rho=\d(p,p_\rho)<\inj/3$, then the geodesic distance $r$ from
  $p_\rho$ to the ray along $\bfv$ satisfies
\[
\Gamma_1 \rho\sin \bigl(\min(\alpha,\pi/2)\bigr) \le r\le \Gamma_2 \rho\sin \bigl(\min(\alpha,\pi/2)\bigr).
\]
\end{lemma}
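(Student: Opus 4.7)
The strategy is to reduce the manifold-distance question to an elementary planar calculation in $T_p\M$ via the bi-Lipschitz equivalence (\ref{isometry}) of $\Exp_p$ with the Euclidean metric on the tangent space.

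First I would carry out the Euclidean computation in $T_p\M$ equipped with the inner product $g_p$. Let $R_\bfv=\{s\bfv:s\ge 0\}$. Orthogonally projecting $\rho\bfu$ onto the line $\RR\bfv$ places the foot of the perpendicular at $s_0\bfv$ with $s_0=\rho\langle\bfu,\bfv\rangle=\rho\cos\alpha$. When $\alpha\in(0,\pi/2]$, we have $s_0\ge 0$, so the foot lies on $R_\bfv$ and the Euclidean distance equals $\rho\sin\alpha$. When $\alpha\in(\pi/2,\pi]$, $s_0<0$ lies off the ray and the nearest point of $R_\bfv$ is the origin, giving Euclidean distance $\rho=\rho\sin(\pi/2)$. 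In both cases the Euclidean distance from $\rho\bfu$ to $R_\bfv$ is exactly $\rho\sin(\min(\alpha,\pi/2))$, and more importantly every point $s\bfv\in R_\bfv$ satisfies $|\rho\bfu-s\bfv|\ge\rho\sin(\min(\alpha,\pi/2))$.

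For the \emph{upper bound}, set $s_\star=\max(\rho\cos\alpha,0)$. Then $\rho\bfu$ and $s_\star\bfv$ both lie in $B(0,\rho)\subset B(0,\inj/3)$, so (\ref{isometry}) yields
\[
r\;\le\;\d\bigl(p_\rho,\Exp_p(s_\star\bfv)\bigr)\;\le\;\Gamma_2\,|\rho\bfu-s_\star\bfv|\;=\;\Gamma_2\,\rho\sin(\min(\alpha,\pi/2)).
\]

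For the \emph{lower bound}, I would argue that the infimum defining $r$ is only tested against points of the ray that lie in the domain of $\Gamma_1,\Gamma_2$. Since the point $\Exp_p(0)=p$ already lies on the ray at distance $\rho$ from $p_\rho$, we have $r\le\rho$. Consequently, for any $s$ with $\d(p_\rho,\Exp_p(s\bfv))\le\rho$ the triangle inequality gives $s=\d(p,\Exp_p(s\bfv))\le 2\rho<\tfrac23\inj$ (geodesics from $p$ are minimizing up to $\inj$, hence certainly on $[0,2\rho]$). Thus the infimum is achieved over the segment $s\in[0,2\rho]$, which lies in the fixed radius on which $\Gamma_1,\Gamma_2$ are computed per the convention following (\ref{isometry}). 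Applying (\ref{isometry}) pointwise and combining with the Euclidean lower bound yields
\[
\d\bigl(p_\rho,\Exp_p(s\bfv)\bigr)\;\ge\;\Gamma_1\,|\rho\bfu-s\bfv|\;\ge\;\Gamma_1\,\rho\sin(\min(\alpha,\pi/2)),
\]
uniformly in $s\in[0,2\rho]$, giving $r\ge\Gamma_1\rho\sin(\min(\alpha,\pi/2))$.

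The only delicate point is confining the relevant part of the ray to the injectivity region so that (\ref{isometry}) can be invoked; the easy observation $r\le\rho<\inj/3$ (obtained by evaluating at $s=0$) together with the triangle inequality handles this cleanly. Everything else is a straight-line Euclidean calculation transported back to $\M$ by the metric-equivalence constants.
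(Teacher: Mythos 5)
Your proof takes essentially the same approach as the paper: work in normal coordinates based at $p$, compute the elementary Euclidean distance $\rho\sin(\min(\alpha,\pi/2))$ from $\rho\bfu$ to the ray $\{s\bfv: s\ge 0\}$ in $T_p\M$, then transfer via the bi-Lipschitz equivalence (\ref{isometry}) to get both the upper and lower bounds. The one place you go beyond the paper is the localization step for the lower bound: the paper simply asserts that the geodesic minimizer occurs at some $\Exp_p(t\bfv)$ and applies (\ref{isometry}) without checking that $t\bfv$ is in the ball where the metric equivalence holds, whereas you observe $r\le\rho$ and invoke the triangle inequality to confine $s$ to $[0,2\rho]$. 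That extra care is welcome, though it has a small loose end: you write $s=\d(p,\Exp_p(s\bfv))$, which holds only when the geodesic in direction $\bfv$ is still minimizing at parameter $s$ (guaranteed for $s<\inj$, not for $s\ge\inj$), so strictly speaking the case $s\ge\inj$ has not been eliminated. In practice the lemma is only applied to bounded geodesic segments (cone radii well below $\inj$), so the same implicit restriction is present in the paper; it would be cleaner simply to state up front that the ``ray'' means the segment $\{\Exp_p(s\bfv): 0\le s\le S\}$ with $S$ below the radius on which $\Gamma_1,\Gamma_2$ are computed.
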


\begin{proof}
  Consider the sector in $\spam(\bfu,\bfv)$ formed by $t\bfu + s\bfv$,
  where $s, t\ge 0$. We will work in normal coordinates based at $p$. 
  The minimum geodesic distance $r$ from $p_\rho$ to geodesic $\Exp_p(s\bfv)$ occurs at a point $\Exp_p(t\bfv)$. In addition, the minimum Euclidean distance $r'$ from $\rho\bfu=\Exp_p^{-1}(p_\rho)$ to the ray will occur at another point, $t'\bfv$, where $\bfv$ is perpendicular to $t'\bfv-\rho\bfu$, in the Euclidean sense. These facts imply that $r=\d(p_\rho,t\bfv)\le
  \d(p_\rho,\Exp_p(t'\bfv))\le \Gamma_2 |\rho \bfu -
  t'\bfv|_{\text{\rm eucl}}$. Using a little trigonometry, together with $t'\bfv - \rho\bfu$ and
  $\bfv$ being perpendicular, we see that $|\rho\bfu - t'\bfv|_{\text{\rm
      eucl}}=\rho\sin(\alpha)$ when $\alpha<\pi/2$,  and  that 
      $|\rho\bfu - t'\bfv|_{\text{\rm eucl}}=|\rho\bfu |_{\text{\rm eucl}}=\rho$ when $\alpha\ge \pi/2$. In a similar way, we have
  $r=\d(p,\Exp_p(t\bfv))\ge \Gamma_1|\rho\bfu - t\bfv|_{\text{\rm  eucl}}\ge \Gamma_1 |\rho\bfu-t'\bfv|_{\text{\rm eucl}}=
  \Gamma_1\rho\sin\bigl(\min(\alpha,\pi/2)\bigr)$.  Combining the inequalities completes the
  proof.
\end{proof}

\noindent There is corollary to the lemma that will be useful for smooth surfaces, in particular balls and annuli. We state
and prove it now, although it will only become useful after the zeros result Theorem \ref{omega_manf_continuous_est}.

%
\begin{corollary}\label{ball_boundary}
Let $q\in \partial\Omega$ and suppose there is a ball $\b(p,\rho)$, $\rho<\inj/3$, such that $\b(p,\rho)\subset \Omega$ and that $d(q,p)=\rho$. Then, the geodesic cone $C_q$, with vertex $q$, axis along the geodesic joining $q$ to $p$, radius $\Gamma_1 \rho/2$ and angle $\varphi=\arcsin(\frac{1}{2\Gamma_2})$ satisfies $C_q\setminus \{q\}\subset \Omega$. 
\end{corollary}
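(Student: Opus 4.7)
The plan is to work in normal coordinates based at $q$ and reduce the claim to a Euclidean computation in the spirit of Lemma~\ref{min_ray2pt_dist}. Since $d(q,p) = \rho < \inj/3$ there is a unit vector $\bfv \in T_q\M$ with $p = \Exp_q(\rho\bfv)$, and $\gamma(s) := \Exp_q(s\bfv)$ is a minimizing unit-speed geodesic joining $q$ to $p$. A generic point of $C_q\setminus\{q\}$ can be written $q' = \Exp_q(\rho'\bfu)$, where $\bfu$ is a unit vector with $\alpha := \angle(\bfu,\bfv) \le \varphi$ and $0 < \rho' \le \Gamma_1\rho/2$. The goal reduces to showing $d(q',p) < \rho$, which by hypothesis gives $q' \in \b(p,\rho) \subset \Omega$.

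First I would choose the foot of the Euclidean perpendicular from $\rho'\bfu$ onto the $\bfv$-axis in $T_q\M$. Setting $t' := \rho'\cos\alpha$, the vector $\rho'\bfu - t'\bfv$ is Euclidean-orthogonal to $\bfv$ and has length $\rho'\sin\alpha$. Both $\rho'\bfu$ and $t'\bfv$ lie in the Euclidean ball of radius $\rho' \le \rho/2 < \inj/3$, so (\ref{isometry}) yields
\[
d\bigl(q', \gamma(t')\bigr) \;\le\; \Gamma_2\,\rho'\sin\alpha.
\]
Because $\gamma$ restricted to $[t',\rho]$ is still minimizing and unit-speed, $d(\gamma(t'),p) = \rho - t' = \rho - \rho'\cos\alpha$, so the triangle inequality gives
\[
d(q',p) \;\le\; d\bigl(q', \gamma(t')\bigr) + d\bigl(\gamma(t'),p\bigr) \;\le\; \rho - \rho'\bigl(\cos\alpha - \Gamma_2\sin\alpha\bigr).
\]

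It remains to verify $\cos\alpha - \Gamma_2\sin\alpha > 0$ for every $\alpha \in [0,\varphi]$ with $\varphi = \arcsin(1/(2\Gamma_2))$. At $\alpha = \varphi$ one has $\Gamma_2\sin\varphi = 1/2$ and $\cos\varphi = \sqrt{1 - 1/(4\Gamma_2^2)} \ge \sqrt{3}/2$ (using $\Gamma_2 \ge 1$); in particular $\cos\varphi > 1/2 = \Gamma_2\sin\varphi$, and monotonicity of $\sin,\cos$ on $[0,\pi/2]$ handles $\alpha < \varphi$. Combined with $\rho' > 0$ (as $q' \ne q$), the displayed bound gives $d(q',p) < \rho$, so $q' \in \b(p,\rho) \subset \Omega$, as desired.

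There is no genuine analytic obstacle; the argument is a short Euclidean computation in the exponential chart. The only delicate point is the tuning of the constants in the statement. The radius $\Gamma_1\rho/2$ keeps the cone inside the normal chart on which (\ref{isometry}) is valid, while the angle $\varphi = \arcsin(1/(2\Gamma_2))$ is calibrated precisely so that the ``tangential slippage'' $\Gamma_2\sin\alpha$ induced by the metric distortion remains strictly dominated by the ``radial progress'' $\cos\alpha$ into $\b(p,\rho)$.
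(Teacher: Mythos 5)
Your argument is correct, but it is logically different from the one in the paper. You prove the containment directly: parametrizing a non-vertex cone point as $q'=\Exp_q(\rho'\bfu)$, projecting onto the axis at the Euclidean foot $t'=\rho'\cos\alpha$, and combining (\ref{isometry}) with the minimality of the axis geodesic to get $\d(q',p)\le \rho-\rho'(\cos\alpha-\Gamma_2\sin\alpha)<\rho$, with the angle $\varphi=\arcsin\bigl(\frac{1}{2\Gamma_2}\bigr)$ calibrated so that $\cos\alpha-\Gamma_2\sin\alpha>0$ (all the ingredients check out: both tangent vectors stay in $B(0,\inj/3)$ so (\ref{isometry}) applies, $\Gamma_2\ge 1$ gives $\cos\varphi\ge\sqrt3/2>1/2=\Gamma_2\sin\varphi$, and restriction of the minimizing geodesic gives $\d(\gamma(t'),p)=\rho-t'$). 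The paper instead argues by rigidity: it takes a point $q'$ on the lateral side of the cone at distance exactly $\rho$ from $p$, uses Lemma~\ref{min_ray2pt_dist} and two triangle inequalities to force equality ($t'=\d(p',q')=s/2$), and then invokes the fact that a length-minimizing piecewise smooth curve is a geodesic, together with uniqueness of geodesics inside the injectivity radius, to conclude $q'=q$, so the cone can only meet the sphere $\partial\b(p,\rho)$ at the vertex. Your route is more elementary and quantitative: it avoids the geodesic-rigidity/uniqueness input entirely, produces an explicit margin $\rho'(\cos\alpha-\Gamma_2\sin\alpha)$, and yields the slightly stronger statement $C_q\setminus\{q\}\subset\b(p,\rho)$ without any auxiliary continuity or connectedness argument; the paper's version, on the other hand, reuses the point-to-ray distance estimates of Lemma~\ref{min_ray2pt_dist} already set up for Proposition~\ref{euclidean_star_D_j} and identifies precisely where the cone can touch the boundary sphere.
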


\begin{proof}
Let $q'$ be a point on the lateral side of the cone that is $\rho$ away from $p$ --
denote it in coordinates around $q$ by $q'=s\bfv$. With this, we identify  two triangles.
\begin{figure}[ht]
\centering
 \psfrag{p}{$p$}
 \psfrag{p'}{$p'$}
 \psfrag{q}{$q$}
\psfrag{q'}{$q'$}
\psfrag{boundary}{$\partial \b(p,\rho)$}
  \includegraphics[height=2.5in]{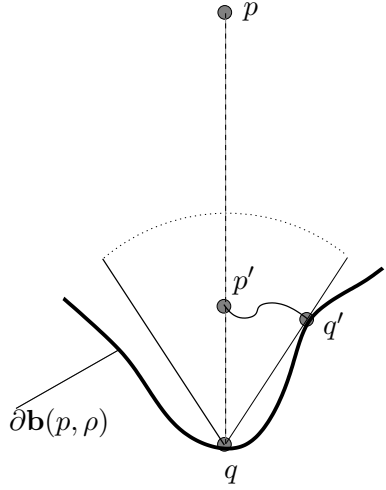}
  \caption{In this figure, $p$ is the center of the ball of radius $\rho$, $q$ is a point on the boundary,
  $q'$ is a point simultaneously on the boundary of the ball and on the side of the cone and 
  $p'$ is the nearest point on the ray $\Exp_q(t\bfv)$ to $q'$.}
  \end{figure}

The first triangle has corners $p =\Exp_q(\rho \bfu)$, $q'=\Exp_{q}(s\bfv)$ and a point on the ray
$\Exp_q(t \bfu)$ with $0<t<\rho$ ( 
 Lemma A.7 guarantees that the vertex at $p$ is acute, since $s\le \Gamma_1 \rho/2$). 
 Let us denote the third corner of this triangle by $p' = \Exp_q(t' \bfu)$
Note that $p'$ is a distance of $t'$ from $q$ and a distance of $\rho - t'$ from $p$.
The triangle inequality gives us that 
$$\rho\le \d(p',q')  + (\rho - t')  \quad \longrightarrow \quad t'\le\d(p',q'). $$
The second triangle we consider has corners $q$, $q'$ and $p'$, and Lemma A.7 ensures that
$\d(q',p') \le \Gamma_2 s\sin\alpha = s/2$. So the triangle inequality here gives us
that $s \le \d(p',q') + t'\le s/2 + t'$ -- so $s/2 \le t'$.

Combining  estimates from both triangles, we see that
$s/2 \le t' \le \d(p',q') \le s/2$, so both
$t'$ and $\d(p',q')$ are $s/2$.

In other words, the curve from $p$ to $q'$ has the same length as the curve from $p$ to
$p'$ to $q'$. By \cite[Corollary 3.9, p. 73]{docarmo_1992}, any piecewise differentiable 
curve joining two points -- $p$ to $p'$ to $q'$
our case -- with length less than or equal to any other such curve is a
geodesic. 
Because this occurs inside $\b(p, \inj)$, where geodesics do not
cross, there can only be one geodesic
joining $p$ and $q'$. Since $p$ to $p'$ has to be on the geodesic joining $p'$ to
$q'$, and since the length is $\rho$, $q'$ and $q$ coincide.
\end{proof}

\begin{proposition}\label{euclidean_star_D_j}
  The domain $D_j:=\Exp_{p_j}^{-1}(\stardom_j)$ is star shaped with
  respect to the Euclidean ball
  $B(\Exp_{p_j}^{-1}(p_j),\Gamma_1 r/\Gamma_2^2)$. Also, the chunkiness parameter
  and diameter for $D_j$ satisfy
\begin{equation}
\label{euclidean_bnds}
\gamma_{D_j} \le \frac{2\Gamma_2^2 R}{\Gamma_1 r} = 
\left(\frac{\Gamma_2}{\Gamma_1}\right)^2\frac{4(1+\sin(\varphi))}{\Gamma_1\sin(\varphi)}=
\frac{\Gamma_2^2}{\Gamma_1^2F(\varphi)} \ \text{and } d_{D_j}\le 2R.
\end{equation}

\end{proposition}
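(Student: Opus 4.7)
The plan is to work in normal coordinates at $p_j$, in which $\Exp_{p_j}^{-1}(p_j)=0$, and to verify in turn: (a)~the inner ball $B(0,\rho)\subset D_j$, where $\rho:=\Gamma_1 r/\Gamma_2^2$; (b)~the outer/diameter bound; (c)~Euclidean star-shapedness of $D_j$ with respect to $B(0,\rho)$; and (d)~the chunkiness estimate, which will then follow from Proposition~\ref{cone_cond}.

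Step (a) is short: for $|y|<\rho$ the metric equivalence (\ref{isometry}) gives $\d(p_j,\Exp_{p_j}(y))\le\Gamma_2|y|<\Gamma_1 r/\Gamma_2\le r$, placing $\Exp_{p_j}(y)\in\b(p_j,r)\subset\Omega$; since $\b(p_j,r)$ is geodesically convex at the scale $r<\inj/3$, every geodesic from $\Exp_{p_j}(y)$ to a point of $\b(p_j,r)$ stays inside $\b(p_j,r)\subset\Omega\cap\b(p_j,R)$, so $\Exp_{p_j}(y)\in\stardom_j$. Step (b) is equally short: any $p\in\stardom_j$ has $\d(p_j,p)<R$, hence $|\Exp_{p_j}^{-1}(p)|\le R/\Gamma_1$ by (\ref{isometry}), giving $d_{D_j}\le 2R/\Gamma_1$ (which replaces the slightly cleaner $2R$ printed in (\ref{euclidean_bnds}) without harming the chunkiness estimate).

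The central task is (c). Fix $x\in D_j$ and write $p:=\Exp_{p_j}(x)\in\stardom_j$, so the geodesic convex hull $H_p$ of $\{p\}\cup\b(p_j,r)$ is contained in $\Omega\cap\b(p_j,R)$. Given $y\in B(0,\rho)$ and $t\in[0,1]$, set $z_t:=(1-t)y+tx$; the goal is $\Exp_{p_j}(z_t)\in\stardom_j$. I would proceed in two sub-steps. First, certify $\Exp_{p_j}(z_t)\in H_p\cup\b(p_j,r)$: the Euclidean ray from $x$ through $z_t$ meets $B(0,|y|)$ at some $y_t$, and transferring this picture to $\M$ via $\Exp_{p_j}$, using Lemma~\ref{min_ray2pt_dist} together with (\ref{isometry}), places $\Exp_{p_j}(z_t)$ on a geodesic from $p$ to some $w_t\in\b(p_j,r)$, with the angular/radial slack absorbed into the factor $\Gamma_1/\Gamma_2^2$ that defines $\rho$. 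Second, for any $w\in\b(p_j,r)$, apply an analogous argument with $w$ in place of $x$ to show the geodesic from $\Exp_{p_j}(z_t)$ to $w$ likewise lies in $H_p\cup\b(p_j,r)\subset\Omega\cap\b(p_j,R)$, so that $H_{\Exp_{p_j}(z_t)}\subset\Omega\cap\b(p_j,R)$ and thus $\Exp_{p_j}(z_t)\in\stardom_j$.

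The main obstacle is the second sub-step, which plays the role of the elementary Euclidean nesting identity $\mathrm{conv}(\{z\}\cup B)\subset\mathrm{conv}(\{x\}\cup B)$ used in \cite[Lemma~2.11]{NWW}. Geodesic ``hulls'' as defined here are not honest convex sets, so this nesting requires Riemannian input; the estimate succeeds precisely because (i) all relevant scales lie below $\inj/3$, allowing (\ref{isometry}) and Lemma~\ref{min_ray2pt_dist} to be applied uniformly, and (ii) the inner radius $\rho=\Gamma_1 r/\Gamma_2^2$ is calibrated to absorb the metric distortion $\Gamma_2/\Gamma_1$. Once (a)--(c) are in hand, Proposition~\ref{cone_cond} immediately yields $\gamma_{D_j}\le d_{D_j}/\rho\le 2\Gamma_2^2 R/(\Gamma_1^2 r)=\Gamma_2^2/(\Gamma_1^2 F(\varphi))$ after substituting $r=2F(\varphi)R$, matching the final form in (\ref{euclidean_bnds}).
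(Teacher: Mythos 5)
Your outline names the right objects (normal coordinates at $p_j$, inner ball, diameter, star--shapedness, chunkiness), but the central claim (c) is not actually proved, and the route you sketch would not work as stated. In your first sub-step you assert that the Euclidean segment from $x$ to $y\in B(0,\rho)$ ``transfers'' under $\Exp_{p_j}$ to a point on a geodesic from $p$ through $\b(p_j,r)$, with the distortion ``absorbed'' into the factor $\Gamma_1/\Gamma_2^2$. But $\Exp_{p_j}$ does not carry Euclidean chords to geodesics (only radial lines through the origin are geodesics in these coordinates), and the factor $\Gamma_1/\Gamma_2^2$ is exactly what must be derived, not postulated. The paper's proof produces it by a concrete construction that never appears in your sketch: take the largest geodesic cone with vertex $p$ and axis the geodesic from $p$ to $p_j$ all of whose lateral geodesics meet $\b(p_j,r)$; apply Lemma~\ref{min_ray2pt_dist} to the tangent lateral geodesic to get $r\le\Gamma_2\,\rho_j\sin\bigl(\min(\alpha,\pi/2)\bigr)$ with $\rho_j=\d(p,p_j)$, and again to see that the axis point $p_\rho$ at distance $\rho$ from $p$ lies at geodesic distance at least $\Gamma_1\rho\sin\bigl(\min(\alpha,\pi/2)\bigr)\ge \Gamma_1 r\rho/(\Gamma_2\rho_j)$ from the lateral surface, so the geodesic ball $\b\bigl(p_\rho,\Gamma_1 r\rho/(\Gamma_2\rho_j)\bigr)$ sits inside the cone; by (\ref{isometry}) it contains the Euclidean ball of radius $\Gamma_1 r\rho/(\Gamma_2^2\rho_j)$ about $\Exp_{p_j}^{-1}(p_\rho)$. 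The decisive structural fact missing from your argument is that the axis geodesic passes through $p_j$, so its preimage is the \emph{straight} segment from $\Exp_{p_j}^{-1}(p)$ to $0$ with exact arclength (radial isometry of the exponential map), whence the union of these Euclidean balls is precisely the Euclidean convex hull of $\{\Exp_{p_j}^{-1}(p)\}\cup B(0,\Gamma_1 r/\Gamma_2^2)$. As for your second sub-step (the nesting of geodesic joins, i.e.\ passing from containment in $\Omega\cap\b(p_j,R)$ to membership in $\stardom_j$): you are right that this is a genuine subtlety, which the paper treats very briefly, but you explicitly label it ``the main obstacle'' and then offer only a plausibility claim; a proof cannot leave its acknowledged key step unresolved.

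Two smaller points. The same radial-isometry fact repairs your step (b): $|\Exp_{p_j}^{-1}(p)|=\d(p_j,p)<R$, so $d_{D_j}\le 2R$ holds exactly as stated, and the first bound $\gamma_{D_j}\le 2\Gamma_2^2R/(\Gamma_1 r)$ in (\ref{euclidean_bnds}) survives; your substitute $d_{D_j}\le 2R/\Gamma_1$ silently weakens the stated estimate rather than reproducing it. And in step (a), ``$\b(p_j,r)$ is geodesically convex at the scale $r<\inj/3$'' does not follow from the injectivity radius alone (convexity requires the convexity radius, which involves curvature bounds); the paper never needs such a claim, since the inner ball $B(0,\Gamma_1 r/\Gamma_2^2)$ is produced by the same cone-and-balls construction, being the Euclidean ball obtained at $\rho=\rho_j$.
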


\begin{proof}
We begin by fixing a point $p\in \stardom_j$.
The geodesic convex hull of $\{p\}\cup \b(p_j,r)$ contains  a  largest cone
 with vertex $p$ and central axis the geodesic ray connecting
  $p$ to $p_j$. 
 On this cone, whose (lateral) surface consists of geodesics emanating from $p$, 
 there exists a geodesic  
 lying tangent to the sphere $\partial \b(p_j,r)$. In other
  words, we take the cone of largest aperture $2\alpha$ for which all  geodesics
  pass through $\b(p_j,r)$. 
\begin{figure}[ht]
\centering
 \psfrag{r}{$r$}
 \psfrag{pr}{$p_{\rho}$}
 \psfrag{p}{$p$}
\psfrag{pj}{$p_j$}
\psfrag{R}{$r'$}
  \includegraphics{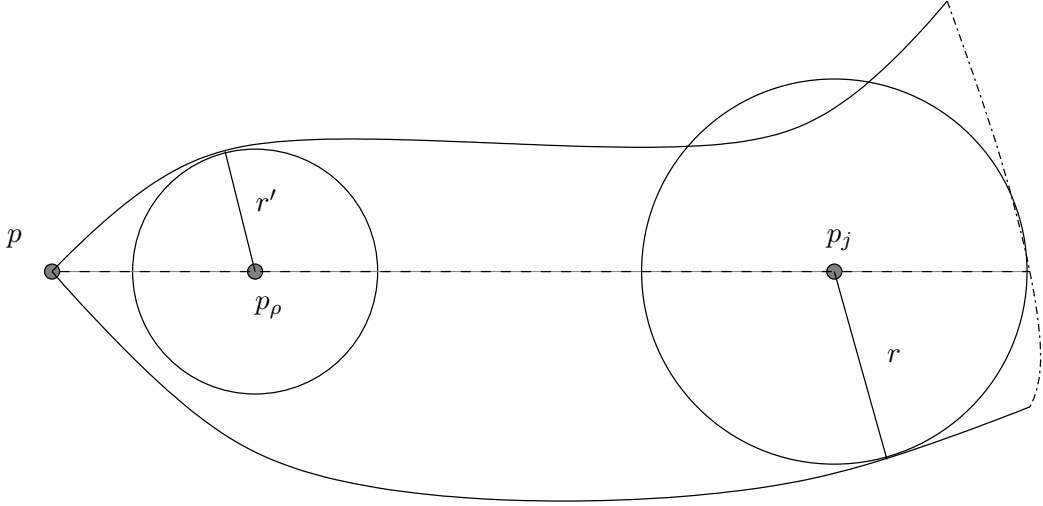}
  \caption{The largest cone with vertex $p$ and with central axis the geodesic that connects $p$ to $p_j$  
  for which no geodesic lies outside of the ball $\b(p_j, r)$. 
The radius $r'$ of the ball centered at $p_{\rho}$ lying tangent to the cone is greater than  
$\frac{\Gamma_1 r\rho}{\Gamma_2\rho_j}$}
  \end{figure}
 
  From  this two things follow. 
  First, by Lemma~\ref{min_ray2pt_dist}, the distance from $p$
  to $p_j$, $\rho_j=\d(p,p_j)$, the  angle $\alpha$, 
  and the radius $r$ are related by   $r\le \Gamma_2 \rho_j \sin\bigl(\min(\alpha,\pi/2)\bigr)$. 
  Second, for $\rho \le \rho_j$ and for a point $p_{\rho}$ lying a distance of $\rho$ from $p$ along the central axis, the
  distance $r'$ of $p_\rho$ to the surface of the cone satisfies: 
  $$r'\ge \Gamma_1 \rho\sin\bigl(\min(\alpha,\pi/2)\bigr) \ge \frac{\Gamma_1 r \rho}{\Gamma_2 \rho_j}.$$ 
  It follows the cone contains the ball
  $\b(p_\rho,(\Gamma_2 \rho_j) ^{-1}\Gamma_1r \rho)$, which obviously is also contained in the convex hull of 
  $\{p\}\cup \b(p_j,r)$. 
  
Shifting to normal coordinates centered at $p_j$, rather than at $p$, we see that the geodesic ball
$\b(p_\rho,(\Gamma_2 \rho_j) ^{-1}\Gamma_1r \rho)$ 
contains the (image of the) Euclidean ball
$B(\Exp_{p_j}^{-1}(p_\rho),(\Gamma_2^2 \rho_j)^{-1}\Gamma_1r\rho)$. 
Indeed, if $|y-\Exp_{p_j}^{-1}(p_\rho)|\le (\Gamma_2^2 \rho_j)^{-1}\Gamma_1r\rho$   then
$$\d(\Exp_{p_j}(y), p_{\rho}) \le \Gamma_2 |y -  \Exp_{p_j}^{-1}(p_\rho)| \le \frac{\Gamma_1r \rho}{\Gamma_2 \rho_j}.$$ 
A straightforward argument using Euclidean geometry implies that the
  Euclidean convex hull of 
  $\{\Exp_{p_j}^{-1}(p)\}\cup B(0,\Gamma_1r/\Gamma_2^2)$ 
  is contained in $D_j$. Hence, in the Euclidean
  metric, $D_j$ is star shaped with respect to the ball
  $B(\Exp_{p_j}^{-1}(p_j), \Gamma_1r/\Gamma_2^2)$. Moreover, since
  $\stardom_j\subset \b(p_j,R)$, we have that $D_j\subset
  B(\Exp_{p_j}^{-1}(p_j), R )$.  Finally, from these facts it
  is easy to see that the bounds in (\ref{euclidean_bnds}) hold.

\end{proof}

Applying this together with Proposition~\ref{stardom_est}  and  Lemma~\ref{Fran} , we have the following result.

\begin{proposition}\label{manif_stardom_est}
  Let $\stardom_j$ be as above,
  $m\in \nats$ and $\sfp\in \reals$, $1\le \sfp \le \infty$. Assume
  $m>d/\sfp$ when $\sfp>1$, and $m\ge d$, for $\sfp=1$. If $u\in
  W_\sfp^m(\stardom_j)$ satisfies $u|_X=0$, where
  $X$ is a finite subset of $\stardom_j$, and if the geodesic meshnorm $h=h_X\le \frac{\Gamma_1^5 F(\varphi)^2}{8m^2 \Gamma_2^4}R$, then
\begin{equation}
\label{p_bound_W_k_u_M}
\|u\|_{W_\sfp^k(\stardom_j)} \le C_{m,k,\sfp,\M} R^{m-k}F(\varphi)^{-d-2k}
\|u\|_{W_\sfp^m(\stardom_j)}
\end{equation}
\begin{equation}
\label{infty_bnd_u_M}
\|u\|_{L_\infty(\stardom_j)} \le C_{m,k,\sfp,\M} R^{m-d/\sfp} F(\varphi)^{-d} 
\|u\|_{W_\sfp^{m}(\stardom_j)}.
\end{equation}
\end{proposition}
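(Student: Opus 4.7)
The plan is to reduce the manifold statement to the Euclidean statement in Proposition~\ref{stardom_est} by pulling everything back through $\Exp_{p_j}$, then to transfer the resulting bounds back to $\stardom_j$ using Lemma~\ref{Fran}. The star-shaped Euclidean image $D_j=\Exp_{p_j}^{-1}(\stardom_j)$ and the explicit chunkiness/diameter control for it have already been established in Proposition~\ref{euclidean_star_D_j}, so most of the structural work is in place; the work remaining is essentially bookkeeping of constants, combined with one honest estimate on how the mesh norm transforms.

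First I would set $\widetilde{u}:= u\circ\Exp_{p_j}$ on $D_j$ and $\widetilde{X}:=\Exp_{p_j}^{-1}(X)\subset D_j$, observing that $\widetilde{u}$ vanishes on $\widetilde{X}$. The key preparatory step is to control the Euclidean mesh norm $h_{\widetilde{X},D_j}$ in terms of the geodesic mesh norm $h$. Using the lower half of the bi-Lipschitz equivalence \eqref{isometry}, every $y\in D_j$ admits some $\xi\in\widetilde{X}$ with $|y-\xi|\le \Gamma_1^{-1}\d(\Exp_{p_j}(y),\Exp_{p_j}(\xi))\le \Gamma_1^{-1}h$. Hence $h_{\widetilde{X},D_j}\le h/\Gamma_1$. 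Combined with the bounds from Proposition~\ref{euclidean_star_D_j}, namely $\gamma_{D_j}\le \Gamma_2^2/(\Gamma_1^2F(\varphi))$ and $d_{D_j}\le 2R$, the hypothesis of Proposition~\ref{stardom_est} in the form $h_{\widetilde{X},D_j}\le d_{D_j}/(16m^2\gamma_{D_j}^2)$ becomes
\[
\frac{h}{\Gamma_1}\le\frac{2R}{16m^2}\cdot\frac{\Gamma_1^4F(\varphi)^2}{\Gamma_2^4},
\]
which is exactly the hypothesis $h\le \Gamma_1^5F(\varphi)^2R/(8m^2\Gamma_2^4)$ we have been given. Thus Proposition~\ref{stardom_est} applies to $\widetilde{u}$ on $D_j$ and delivers, for $0\le k'\le m$,
\[
|\widetilde{u}|_{W_\sfp^{k'}(D_j)}\le C_{m,d,\sfp}\,\gamma_{D_j}^{d+2k'}d_{D_j}^{m-k'}|\widetilde{u}|_{W_\sfp^m(D_j)}\le C'_{m,d,\sfp,\M}\,F(\varphi)^{-d-2k'}R^{m-k'}|\widetilde{u}|_{W_\sfp^m(D_j)},
\]
and likewise the corresponding $L_\infty$ bound.

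The last step is to transfer these Euclidean estimates to the manifold. Since $R<\inj/3$ and $D_j\subset B(\Exp_{p_j}^{-1}(p_j),R)$, Lemma~\ref{Fran} gives two-sided equivalences $\|\widetilde{u}\|_{W_\sfp^{k'}(D_j)}\asymp\|u\|_{W_\sfp^{k'}(\stardom_j)}$ (and analogously in $L_\infty$), with constants depending only on $\M,m,\sfp$, not on $p_j$. Summing the Euclidean seminorm bounds for $k'=0,\dots,k$ (each bounded by the same $F(\varphi)^{-d-2k}R^{m-k}$ type factor, up to the constant multiplicative adjustments from $R\le R_0$ and $F(\varphi)\le 1$) yields the full-norm estimate \eqref{p_bound_W_k_u_M}; the $L_\infty$ bound \eqref{infty_bnd_u_M} follows the same way from the $L_\infty$ half of Proposition~\ref{stardom_est}.

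I do not expect any serious obstacle beyond the bookkeeping: the one spot worth being careful about is the direction of the mesh-norm comparison (Euclidean mesh no larger than $h/\Gamma_1$), and the fact that Proposition~\ref{stardom_est} supplies a seminorm estimate while the statement here is in full Sobolev norms — handled by summing seminorms and absorbing lower order $R^{m-k'}$ factors into a constant depending on $m,\M,\sfp$.
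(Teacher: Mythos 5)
Your proposal is correct and follows essentially the same route as the paper's own proof: pull $u$ back through $\Exp_{p_j}$, bound the Euclidean mesh norm by $h/\Gamma_1$ via (\ref{isometry}), apply Proposition~\ref{stardom_est} on $D_j$ with the chunkiness and diameter bounds from Proposition~\ref{euclidean_star_D_j}, and transfer back with Lemma~\ref{Fran}. The only additions you make (explicitly checking that the mesh-norm hypothesis reduces to the stated condition on $h$, and summing seminorms to pass to full Sobolev norms) are bookkeeping steps the paper leaves implicit.
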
 

\begin{proof}
We first apply Proposition~\ref{stardom_est} with $\tilde h$ 
the Euclidean meshnorm for $\Exp_{p_j}^{-1} X$, satisfying 
$\tilde h\le h/\Gamma_1
\le \frac{d_{D_j}}{16m^2\gamma_{D_j}^2}\le \frac{2\Gamma_1^4 RF(\varphi)^2}{16m^2 \Gamma_2^4}$. 
Then, equations (\ref{p_bound_W_k_u})  and (\ref{infty_bnd_u}) hold for $u\circ \Exp_{p_j}$ on $D_j$, with $\gamma_{D_j}$ and $d_{D_j}$ replaced by the bounds in (\ref{euclidean_bnds}). Applying Lemma~\ref{Fran} then gives the bounds above. 
\end{proof}

\begin{theorem}[{\bf Manifold Case}]
  \label{omega_manf_continuous_est}
  Suppose that $\Omega\subseteq \M$ is a bounded, Lipschitz domain that satisfies a uniform cone condition, with the cone having radius $R_\Omega<\inj/3$ and angle $\varphi$. Let $k$, $m$, and $\sfp$ be as in
  Proposition~\ref{stardom_est}, and let $X\subset \Omega $ be a
  discrete set with mesh norm $h$ satisfying
\begin{equation}
\label{h_cond_omega}
h\le  h_0 R_\Omega, \ \ h_0:=\frac{\Gamma_1^5}{8 m^2\Gamma_2^4}  F(\varphi)^2, 
\end{equation}
where $\Gamma_1, \Gamma_2$ and $F(\cdot)$ are defined in (\ref{isometry}) and
(\ref{defs_r_Tr}), respectively.  If $u\in W_\sfp^m(\Omega)$ satisfies
$u|_X=0$, then we have
\begin{equation}
\label{p2p_omega}
\|u\|_{W_\sfp^k(\Omega)} \le C_{m,k,\sfp,\M}F(\varphi)^{-(1+1/\sfp)d-2m}h^{m-k} \|u\|_{W_\sfp^m(\Omega)}
\end{equation} 
and
\begin{equation}
\label{inf2p_omega}
\|u\|_{L_\infty(\Omega)} \le C_{m,\sfp,\M} h^{m-d/\sfp}F(\varphi)^{-d+2d/\sfp-2m} 
\|u\|_{W_\sfp^{m}(\Omega)}.
\end{equation}
\end{theorem}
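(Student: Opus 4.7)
The plan is to transplant the Euclidean proof of Theorem~\ref{omega_euclidean_lip} to the manifold setting using the pieces already assembled: the geodesic cover by star-shaped domains $\stardom_j$ (Proposition~\ref{euclidean_star_D_j}), the local zeros estimate (Proposition~\ref{manif_stardom_est}), and the bounded-overlap count from the minimal $\varepsilon$-net (Lemma~\ref{eps_net_intersect_card}). Given $h$ satisfying the hypothesis, I will first choose the effective cone radius $R := h/h_0$, which satisfies $R \le R_\Omega$ so that the uniform cone condition persists with this radius, and is calibrated so that the meshnorm assumption $h \le \tfrac{\Gamma_1^5 F(\varphi)^2}{8m^2 \Gamma_2^4}R$ of Proposition~\ref{manif_stardom_est} is met with equality. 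With $r := 2F(\varphi)R$, I pick a minimal $r$-net $\{p_1,\dots,p_N\}$ for $\M$ and set $T_r := \{p_j : \b(p_j,r) \subset \Omega\}$. Lemma~\ref{cone_p_j}, applied to interior geodesic cones emanating from each $p \in \Omega$, then guarantees that every such $p$ lies in at least one star-shaped domain $\stardom_j$ with $p_j \in T_r$, so $\Omega = \bigcup_{j \in T_r} \stardom_j$.

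Second, I will invoke Proposition~\ref{manif_stardom_est} on each $\stardom_j$. A short argument, parallel to the Euclidean Lemma~\ref{uniform_t}, is needed here to justify that the local meshnorm of $X$ inside $\stardom_j$ is controlled by $h$: since $h < r$ and $\b(p_j,r) \subset \stardom_j$, every such ball meets $X$, and the global fill bound transfers to a local one of comparable size. The proposition then yields
\[
\|u\|_{W_\sfp^k(\stardom_j)} \le C_{m,k,\sfp,\M}\, R^{m-k} F(\varphi)^{-d-2k}\, \|u\|_{W_\sfp^m(\stardom_j)}
\]
for $1 \le \sfp < \infty$, and the corresponding bound (\ref{infty_bnd_u_M}) for $\sfp = \infty$.

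Third, I raise the displayed inequality to the $\sfp$-th power and sum over $j \in T_r$. The left side dominates $\|u\|_{W_\sfp^k(\Omega)}^\sfp$ because the $\stardom_j$ cover $\Omega$, and the right side is bounded by $M\,\|u\|_{W_\sfp^m(\Omega)}^\sfp$, where the overlap multiplicity $M := \sup_{p \in \M}\#\{j \in T_r : p \in \stardom_j\}$ is the crux quantity. Since $\stardom_j \subset \b(p_j,R)$ with $R = r/(2F(\varphi))$, this multiplicity is bounded by the number of $r$-net balls meeting $\b(p,R)$, which Lemma~\ref{eps_net_intersect_card} applied with $\alpha = 1/(2F(\varphi))$ controls by a constant $C_\M F(\varphi)^{-d}$ depending only on $\M$. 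Substituting $R = h/h_0$, collecting constants, and using the crude bound $F(\varphi)^{-2k} \le F(\varphi)^{-2m}$ to unify the estimate across $0 \le k \le m$, yields (\ref{p2p_omega}). The $L_\infty$ bound (\ref{inf2p_omega}) follows identically except that the sum over $j$ is replaced by a maximum, which eliminates the $F(\varphi)^{-d/\sfp}$ overlap factor and returns the exponent $-d+2d/\sfp-2m$ as claimed.

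The main obstacle is the second step: reconciling the intrinsic local meshnorm hypothesis of Proposition~\ref{manif_stardom_est} with the global fill distance on $\Omega$, and then rigorously tracking the Lipschitz--cone constants $\Gamma_1, \Gamma_2, F(\varphi)$ through the geodesic-to-Euclidean transfer (via Lemma~\ref{Fran}) so that the final $F(\varphi)$ and $h$ exponents match those stated in (\ref{p2p_omega}) and (\ref{inf2p_omega}). Once these bookkeeping issues are in hand, the cover-and-sum structure is routine and mirrors \cite{NWW} exactly.
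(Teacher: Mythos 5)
Your proposal follows the paper's proof essentially verbatim: choose $R = h/h_0$, build $T_r$ from a minimal $r$-net with $r = 2F(\varphi)R$, apply Proposition~\ref{manif_stardom_est} on each $\stardom_j$, sum $\sfp$-th powers with the overlap controlled by Lemma~\ref{eps_net_intersect_card} (at $\alpha = \frac12 F(\varphi)^{-1}$), and take the $\sfp$-th root. One small bookkeeping remark: the ``crude bound'' $F(\varphi)^{-2k} \le F(\varphi)^{-2m}$ you invoke is unnecessary and would actually lose a factor if applied on top of the substitution $R = h/h_0$; substituting $R^{m-k} = h^{m-k}\, h_0^{-(m-k)}$ directly contributes $F(\varphi)^{-2(m-k)}$, which combines exactly with the $F(\varphi)^{-d-2k}$ from Proposition~\ref{manif_stardom_est} to give $F(\varphi)^{-d-2m}$, and then the $(1/\sfp)$-power of the overlap multiplicity supplies the remaining $F(\varphi)^{-d/\sfp}$.
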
  

\begin{proof}
We are given $h$ in (\ref{h_cond_omega}) to begin with. Thus, we may choose $R=\frac{8 m^2\Gamma_2^4} {\Gamma_1^5} hF(\varphi)^{-2}\le R_\Omega$, and also take the $\stardom_j$'s to be the domains corresponding to this $R$. It follows that the conditions on $h$ in Proposition~\ref{manif_stardom_est} hold; consequently, 
\begin{eqnarray}
\|u\|_{W_\sfp^k(\stardom_j)} &\le& C_{m,k,\sfp,\M} h^{m-k}F(\varphi)^{-d-2m}\|u\|_{W_\sfp^m(\stardom_j)} 
\label{p2p_star},\\
\|u\|_{L_\infty(\stardom_j)} &\le& C_{m,\sfp,\M} h^{m-d/\sfp}F(\varphi)^{-d+2d/\sfp-2m} 
\|u\|_{W_\sfp^{m}(\stardom_j)}. \label{inf2p_star}
\end{eqnarray}
Because of the decomposition $\Omega=\cup_{p_j\in T_r}\stardom_j$, the bound in (\ref{inf2p_star}) immediately implies (\ref{inf2p_omega}). Moreover, this decomposition also gives us
\[
\|u\|_{W_\sfp^k(\Omega)}^{\sfp} \le \sum_j\|u\|_{W_\sfp^k(\stardom_j)}^{\sfp} \le  (C_{m,k,\sfp,\M} h^{m-k}F(\varphi)^{-d-2m})^{\sfp}(\sum_j \|u\|_{W_\sfp^m(\stardom_j)}^{\sfp}).
\]
From Definition~\ref{Sob_Norm}, we see that
\[
\sum_j \|u\|_{W_\sfp^m(\stardom_j)}^{\sfp} = \sum_{i=0}^m
    \int_{\Omega}\sum_j  \chi_{\stardom_j}(p)
    | \nabla^i f |_{g,p}^\sfp
    \, \dif \mu(p) \le \sup_{p\in \Omega} \left(\sum_j \chi_{\stardom_j}(p)\right) \|u\|_{W_\sfp^m(\Omega)}^p.
\]
The sum $\sum_j \chi_{\stardom_j}(p)$ is precisely the number of $\stardom_j$'s that  contain $p$. Since $\stardom_j \subset 
\b(p_j,R)$, $p_j$ is itself also in $\b(p,R)$.  Consequently, the number of $\stardom_j$'s containing $p$ is bounded above by the number of balls $\b(p_j, r)$, where $r=2RF(\varphi)\sim h/F(\varphi)$, that contain $p$, and, ultimately by the maximum number of $\b(p_j,r)$'s that can intersect each $\b(p,R)$.  By Lemma~\ref{eps_net_intersect_card}, this is $(4\alpha+1)^de^{\frac{3(d-1)}{\sqrt{|\kappa|}}d_\M}$, where $\alpha = R/r=\frac12 F(\varphi)^{-1}$. 
Putting together the two previous inequalities then yields
\[
\|u\|_{W_\sfp^k(\Omega)}^{\sfp} 
\le 
(C_{m,k,\sfp,\M} h^{m-k}F(\varphi)^{-d-2m})^{\sfp}\,2^{2d}F(\varphi)^{-d} e^{\frac{3(d-1)}{\sqrt{|\kappa|}}d_\M} \|u\|_{W_\sfp^m(\Omega)}^{\sfp}.
\]
Taking the $\sfp^{th}$ root, lumping constants, and manipulating the result, we obtain (\ref{p2p_omega}).
\end{proof}

We remark that the various constants appearing in Theorem \ref{omega_manf_continuous_est}, including $h_0$, only depend on $\varphi$, and only the right side of (\ref{h_cond_omega})  depends on the radius $R_\Omega$, and that dependence is linear. Thus, the dependence on $\Omega$ is completely explicit.

At this point we can extend the Duchon type error estimates for approximation by conditionally positive definite 
kernels. To our knowledge, this is the first result of this kind on bounded regions in compact Riemannian
manifolds.

To this end, suppose $K:\Omega\times\Omega \to \reals$ is positive definite (i.e. the matrix 
$\bigl(K(\xi,\zeta)\bigr)_{\xi,\zeta}$ is positive definite for each $\Xi$ -- see Definition \ref{cpd}) and consider that the ``native space'' $\mathcal{N}_K$, the reproducing kernel Hilbert space
constructed by taking the space of  arbitrary linear combinations of $K(\cdot,\xi)$, 
completed under the inner product $\langle f, g\rangle \mapsto \sum_{\xi,\zeta} A_{\xi}B_{\zeta}K(\xi,\zeta)$ for 
$f = \sum A_{\xi} K(\cdot,\xi)$ and  $g = \sum B_{\zeta}K(\cdot,\zeta)$. In this case, it is well known that the kernel interpolant $I_{\Xi} f$ is the optimal interpolant in the sense of $\mathcal{N}_K$. Namely, $\|I_{\Xi}f\|_{\mathcal{N}_K} \le\|s\|_{\mathcal{N}_K}$ for all $s\in\mathcal{N}_K$ with $s|_{\Xi} = f|_{\Xi}$.

\begin{corollary} Let $m>d/2$, and 
let $K$ be a positive definite kernel on $\M$
for which $\mathcal{N}_K$ is continuously embedded in $W_2^m(\M)$.
Let $\Omega\subset \M$ satisfy a uniform cone condition with radius $R_{\Omega}\le \inj/3$ and 
angle $\varphi$. 
For $\Xi \subset \Omega$ having meshnorm $h\le h_0 R_{\Omega}$ and 
for  $f\in \mathcal{N}_K$,
$$\| f - I_{\Xi}f \|_{L_\infty(\Omega)} \le 
C_{K} F(\varphi)^{-2m} 
h^{m-d/2}\|f  \|_{\mathcal{N}_K(\M)} .$$
\end{corollary}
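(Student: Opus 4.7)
The plan is to combine the manifold zeros lemma just proved with the standard native-space variational argument for interpolation. Set $u := f - I_\Xi f$. By definition of the interpolant, $u$ vanishes on $\Xi$, and the hypothesis $h \le h_0 R_\Omega$ is exactly the density condition required by Theorem \ref{omega_manf_continuous_est}. Hence I may apply the $L_\infty$ bound (\ref{inf2p_omega}) with $\sfp = 2$; since $-d + 2d/\sfp - 2m = -2m$ in this case, the result is
$$
\|u\|_{L_\infty(\Omega)} \le C_{m,\M}\, h^{m-d/2} F(\varphi)^{-2m} \|u\|_{W_2^m(\Omega)}.
$$
It remains to control $\|u\|_{W_2^m(\Omega)}$ by $\|f\|_{\mathcal{N}_K}$.

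The obvious monotonicity $\|u\|_{W_2^m(\Omega)} \le \|u\|_{W_2^m(\M)}$ reduces this to bounding $\|u\|_{W_2^m(\M)}$. Here I would invoke the hypothesis that $\mathcal{N}_K \hookrightarrow W_2^m(\M)$ continuously, so there is a constant $C_{\mathrm{emb}}$ (absorbed into $C_K$) with $\|u\|_{W_2^m(\M)} \le C_{\mathrm{emb}} \|u\|_{\mathcal{N}_K}$. Then I use the standard fact for positive definite kernels: $I_\Xi f$ is the orthogonal projection of $f$ onto the subspace $\spam\{K(\cdot,\xi)\colon \xi \in \Xi\}$ of $\mathcal{N}_K$. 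Consequently the Pythagorean identity
$$
\|f\|_{\mathcal{N}_K}^2 = \|I_\Xi f\|_{\mathcal{N}_K}^2 + \|f - I_\Xi f\|_{\mathcal{N}_K}^2
$$
yields $\|u\|_{\mathcal{N}_K} \le \|f\|_{\mathcal{N}_K}$.

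Chaining these estimates gives $\|f - I_\Xi f\|_{L_\infty(\Omega)} \le C_K F(\varphi)^{-2m} h^{m-d/2} \|f\|_{\mathcal{N}_K}$, as required. The only nontrivial ingredient is the zeros lemma Theorem \ref{omega_manf_continuous_est}; the native-space optimality and the embedding are standard and contribute only constants. There is no real obstacle here beyond ensuring that the $\sfp = 2$ specialisation of (\ref{inf2p_omega}) produces exactly the exponent $F(\varphi)^{-2m}$ appearing in the statement, which it does.
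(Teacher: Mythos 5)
Your proposal is correct and follows essentially the same route as the paper: apply the manifold zeros estimate (Theorem \ref{omega_manf_continuous_est}) with $\sfp=2$ to $u=f-I_\Xi f$, enlarge the Sobolev norm from $\Omega$ to $\M$, use the continuous embedding $\mathcal{N}_K\hookrightarrow W_2^m(\M)$, and finish with the Pythagorean identity for the orthogonal projection $I_\Xi$ in $\mathcal{N}_K$. Your verification that the exponent $-d+2d/\sfp-2m$ collapses to $-2m$ at $\sfp=2$ matches the constant $F(\varphi)^{-2m}$ in the statement, exactly as in the paper's argument.
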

We note that this result holds for a much larger class of kernels than considered 
in the previous sections (i.e., defined by Definition \ref{polyharmonic_kernels}).
In particular, there are numerous examples of compactly supported 
kernels on $\reals^d$ and $\sphere$ having native spaces that are Sobolev spaces,
but which do not invert an elliptic differential operator. 
However, his type of error estimate should be 
compared to those in Corollary \ref{sphere_result} and Corollary \ref{SO(3)_result} -- 
observe that the condition on the target function is quite restrictive (it needs to be in
$\mathcal{N}_K$ and there is a basic disagreement between the approximation order 
$m-d/2$ and the smoothness assumption). 
\begin{proof}
Apply Theorem \ref{omega_manf_continuous_est} to  $u =f-I_{\Xi}$, we see that
\begin{eqnarray*}
\| f - I_{\Xi}f \|_{L_\infty(\Omega)}^2 &\le&
 \left(C_{m,\M} h^{m-d/\sfp}F(\varphi)^{-2m} \right)^2 \| f - I_{\Xi}f \|_{W_2^m(\Omega)}^2\\
&\le& \left(C_{m,\M} h^{m-d/\sfp}F(\varphi)^{-2m} \right)^2 \| f - I_{\Xi}f \|_{W_2^m(\M)}^2\\
&\le& C \left(C_{m,\M} h^{m-d/\sfp}F(\varphi)^{-2m} \right)^2 \| f - I_{\Xi}f \|_{\mathcal{N}_K}^2\\
&\le&C_{K} F(\varphi)^{-4m} 
h^{2m-d}\|f  \|_{\mathcal{N}_K}^2 .
\end{eqnarray*}
The next to last inequality is the embedding $\mathcal{N}_K \subset W_2^m(\M)$,
while the last inequality is the Pythagorean theorem for orthogonal projectors
$\| f - I_{\Xi}f \|_{\mathcal{N}_K}^2 + \|  I_{\Xi}f \|_{\mathcal{N}_K}^2= \| f  \|_{\mathcal{N}_K}^2$.
\end{proof}

There are several domains that are important for us, and that we will discuss below. We begin with the manifold itself. In that case, we may take $R_\Omega = \inj/3$. The angle $\varphi$ may be set equal to $\pi/2$, because every such cone is contained in $\M$. This means that $F(\varphi)=F(\pi/2) = 1/8$. 
\begin{corollary}[{\bf Full Manifold}]
  \label{omega_full_manf_continuous_est}
  Suppose that  $\M$ is compact. Let $k$, $m$, and $\sfp$
  be as in Theorem \ref{omega_manf_continuous_est}. Then, with $h_0:=\frac{\Gamma_2^2}{8^3 m^2\Gamma_1^2} $, there is  a constant $C_{m,k,d,\M}$ such that if $X\subset \M$ 
  has  mesh norm $h\le h_0\inj/3$ 
  and if $u\in W_\sfp^m(\M)$ satisfies $u|_X=0$,
then
\begin{equation}
\label{p_bound_W_k_u2}
\|u\|_{W_\sfp^k(\M)} \le 
 C_{m,k,\sfp,\M}h^{m-k}\|u\|_{W_\sfp^m(\M)} .
\end{equation} 
\end{corollary}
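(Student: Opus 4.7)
The plan is to derive this corollary as a direct specialization of Theorem~\ref{omega_manf_continuous_est} with $\Omega=\M$. The only nontrivial point is verifying the hypotheses of that theorem, in particular the uniform cone condition, with values of $\varphi$ and $R_\Omega$ that yield the claimed form of $h_0$ and constant.

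First I would observe that, because $\M$ is compact and has empty boundary, every point $p\in\M$ is an interior point and the geodesic cone construction is unobstructed: fixing any radius $R_\Omega < \inj/3$ and any half-aperture $\varphi\le \pi/2$, the Euclidean cone $C\subset T_p\M$ of radius $R_\Omega$ and half-aperture $\varphi$ lies inside the ball $B(0,\inj/3)\subset T_p\M$, on which $\Exp_p$ is a diffeomorphism onto $\b(p,\inj/3)\subset \M$. Hence $\Exp_p(C)\subset\M$, and the uniform cone condition holds for $\Omega=\M$ with the largest allowable angle, $\varphi=\pi/2$, and radius $R_\Omega = \inj/3$. We are therefore free to optimize by choosing $\varphi=\pi/2$.

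Next I would plug these choices into Theorem~\ref{omega_manf_continuous_est}. With $\varphi=\pi/2$ one has $\sin\varphi=1$ and $F(\varphi)=\frac{1}{4(1+1)}=\tfrac18$, so the mesh-norm hypothesis $h\le h_0 R_\Omega$ of the theorem becomes exactly the stated bound $h\le \bigl(\Gamma_1^{5}/(8m^2\Gamma_2^{4})\bigr)(\tfrac18)^2\cdot (\inj/3)$, which is of the advertised form $h\le h_0\,\inj/3$ after redefining $h_0$ to absorb the constant $F(\pi/2)^2$. Similarly, the estimate (\ref{p2p_omega}) becomes
\[
\|u\|_{W_\sfp^k(\M)}\le C_{m,k,\sfp,\M}\,F(\pi/2)^{-(1+1/\sfp)d-2m}\,h^{m-k}\,\|u\|_{W_\sfp^m(\M)},
\]
and since $F(\pi/2)=1/8$ is a fixed absolute constant, the prefactor may be lumped into a single constant depending only on $m,k,\sfp$ and $\M$. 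This gives (\ref{p_bound_W_k_u2}).

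Since the derivation is essentially plug-and-play, there is no real obstacle; the one subtlety worth stating explicitly is why the cone-condition parameter can be taken maximal. That is exactly the observation above: with no boundary to obstruct, one uses the local diffeomorphism property of $\Exp_p$ together with the definition of $\inj$ to house an entire half-space cone of aperture $\pi$ (i.e.\ $\varphi=\pi/2$) inside each $\b(p,\inj/3)$. All constants $C_{m,k,\sfp,\M}$ in the final bound then depend only on $m,k,\sfp$, on the metric equivalence constants $\Gamma_1,\Gamma_2$ of Lemma~\ref{Fran}, and on the global geometric data of $\M$ (in particular $\inj$ and the Ricci lower bound that enters Lemma~\ref{eps_net_intersect_card} via $\kappa$ and $d_\M$), exactly as claimed.
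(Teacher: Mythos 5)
Your proposal is correct and follows essentially the paper's own route: the paper likewise specializes Theorem~\ref{omega_manf_continuous_est} to $\Omega=\M$, taking $R_\Omega=\inj/3$ and $\varphi=\pi/2$ (every such geodesic cone is trivially contained in $\M$), so that $F(\pi/2)=1/8$ and the remaining factors are absorbed into the constant. Note that this substitution actually gives $h_0=\Gamma_1^5/(8^3 m^2\Gamma_2^4)$, as in your computation; the combination $\Gamma_2^2/\Gamma_1^2$ printed in the corollary appears to be a misprint, and your value is the one consistent with the theorem since $\Gamma_1\le 1\le\Gamma_2$.
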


The domains that we now turn to are balls, annuli, and complements of balls. 
In all of the cases discussed below, the domains $\Omega$ satisfy the ball property described in Corollary~\ref{ball_boundary} at each point $q\in \partial \Omega$. 
Consequently, we may take 
$\varphi=\arcsin(\frac{1}{2\Gamma_2})$, and so 
$F(\varphi)=1/(8\Gamma_2+4)$. It thus follows that in all such cases 
\begin{equation}
\label{common_h_0}
h_0 =
\frac{\Gamma_1^5}{ 128m^2\Gamma_2^4(2\Gamma_2+1)^2}.
\end{equation}
and all of the factors in Theorem \ref{omega_manf_continuous_est} depend only on parameters from the manifold itself, as well as $p,k,m$, but not at all on the center and the radius of the ball/annulus/punctured ball.

We now turn to balls. A ball of any size may be treated; however, if the the radius is larger that $\inj$ it may intersect itself, giving rise to corners with angles that have to be dealt with on a case by case basis. This phenomenon is easy to see in the case of the torus embedded in $\RR^3$, where a sufficiently large ball begins to wrap back on itself.   When the radius of the ball is less than $\inj$, this wrapping doesn't happen. With this assumption, we have the following result:

\begin{corollary}[{\bf Zeros estimate on balls}] \label{zl_balls}
  Assume $m>d/2$.  Suppose that $r<\inj$. 
  If $u\in W_p^m(\M)$ vanishes on $X\subset \b(p,r)$, where $h\le h_0r/2$, where $h_0$ is given by (\ref{common_h_0}), we have
 \[
\|u\|_{W_p^k(\b(p,r))}\le C_{m,k,\sfp,\M} h^{m-k} |u |_{W_p^m(\b(p,r))}.
\]
\end{corollary}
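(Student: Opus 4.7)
The plan is to derive Corollary \ref{zl_balls} as a direct application of Theorem \ref{omega_manf_continuous_est} to the geodesic ball $\Omega = \b(p,r)$. To do this, the main preparatory task is to verify that $\b(p,r)$ is a bounded Lipschitz domain obeying a uniform cone condition with cone radius of order $r$ and a fixed aperture $\varphi = \arcsin(1/(2\Gamma_2))$. Once this is in place, the corollary follows by simply plugging the parameters into (\ref{p2p_omega}) with $\sfp = p$ and noting that $F(\varphi)$ then depends only on $\M$ and thus gets absorbed into the constant $C_{m,k,\sfp,\M}$.

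The key geometric input is Corollary \ref{ball_boundary}, which converts a one-sided interior ball touching $\partial \Omega$ at a boundary point into an inscribed geodesic cone at that point. Concretely, for each $q \in \partial \b(p,r)$, let $p_q$ be the midpoint of the unique minimizing geodesic from $p$ to $q$ (this is well-defined since $r<\inj$), and set $\rho = r/2$. Then $d(q,p_q) = \rho$ and the triangle inequality gives $\b(p_q,\rho) \subset \b(p,r)$, so Corollary \ref{ball_boundary} produces a geodesic cone $C_q \setminus \{q\} \subset \b(p,r)$ with vertex $q$, axis along the geodesic from $q$ to $p_q$, radius $\Gamma_1 r/4$, and aperture $2\varphi$ with $\varphi = \arcsin(1/(2\Gamma_2))$.

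For the \emph{uniform} cone condition, one must check that nearby points $q' \in \b(q,r')\cap \overline{\b(p,r)}$ admit a cone with the same orientation as $C_q$ (only the vertex moves). This is a straightforward compactness-plus-continuity argument: the direction of the inward geodesic from $q$ varies continuously with $q$, and by shrinking the cone radius by a fixed factor (still comparable to $r$) one obtains a cone, with the same central direction as $C_q$, that remains inside $\b(p,r)$ for every $q'$ in a fixed neighborhood of $q$ in $\overline{\b(p,r)}$. This is the one place that requires a little care; however, because all the relevant estimates in Lemma \ref{min_ray2pt_dist} are in terms of the metric equivalence constants $\Gamma_1,\Gamma_2$ of the exponential map, and because $r<\inj$, the shrinking factor depends only on $\M$. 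Thus we obtain the uniform cone condition with $R_\Omega = c_\M\,r$ and aperture $\varphi=\arcsin(1/(2\Gamma_2))$ for a fixed $c_\M>0$ (so $R_\Omega \ge r/2$ after absorbing $c_\M$ into $h_0$).

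Finally, with these parameters, the mesh-norm hypothesis $h \le h_0 r/2$ implies $h \le h_0 R_\Omega$ in the sense of (\ref{h_cond_omega}), with $h_0$ as in (\ref{common_h_0}); all dependence on $\varphi$ collapses to constants determined by $\Gamma_1, \Gamma_2$, and hence by $\M$ alone. Theorem \ref{omega_manf_continuous_est} then delivers
\[
\|u\|_{W_p^k(\b(p,r))} \le C_{m,k,\sfp,\M}\, h^{m-k}\, \|u\|_{W_p^m(\b(p,r))},
\]
which is (slightly stronger than) the stated conclusion. The main obstacle is not any estimate itself but the bookkeeping in verifying the uniform cone condition; once that is done, the corollary is essentially a re-reading of Theorem \ref{omega_manf_continuous_est} with $\varphi$ and $R_\Omega$ specialized to values depending only on $\M$ and $r$.
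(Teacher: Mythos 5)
Your proposal is correct and follows the same route as the paper: the paper's entire proof is that every boundary point of $\b(p,r)$ satisfies the interior-ball hypothesis of Corollary~\ref{ball_boundary}, which supplies the needed cone condition, after which one simply invokes Theorem~\ref{omega_manf_continuous_est}. You spell out more of the bookkeeping than the paper does -- in particular the continuity argument upgrading the pointwise cone from Corollary~\ref{ball_boundary} to the \emph{uniform} cone condition required by Theorem~\ref{omega_manf_continuous_est}, a step the paper dispatches with the word ``obviously'' -- but this is a more careful execution of the identical argument rather than a different one.
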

\begin{proof}
Obviously every point $q\in \partial \b(p,r)$ satisfies the conditions in Corollary~\ref{ball_boundary}. A direct application of Theorem~\ref{omega_manf_continuous_est} then completes the proof.
\end{proof}
\vspace{3pt}
\begin{corollary}[{\bf H{\" o}lder estimate on balls}] \label{he_balls} If $m$ is greater than $d/2+\epsilon$, and the conditions
of Corollary~\ref{zl_balls}hold (in particular, 
 $r$ is less than $\inj$, and 
 $h \le  h_0 r$), and if 
$u \in W^{m}_2 (\b(p,r))$ satisfies $u{|_{X}} = 0$, then for every $z\in \b(p,r)$,
$$|u(p)-u(z)| \le C r^{m-\epsilon-d/2}\d(p,z)^{\epsilon}\|u\|_{W^{m}_2(\b(p,r))},$$ 
where $C$ is a constant depending only on $m$, $\M$ and $\epsilon$. 
\end{corollary}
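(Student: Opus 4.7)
The plan is to rescale the problem to the unit Euclidean ball via the exponential map and then apply the standard Sobolev embedding $W^m_2\hookrightarrow C^\epsilon$ (valid because $m-d/2>\epsilon$), using the zeros lemma (Corollary \ref{zl_balls}) to trade the lower-order Sobolev norms of the rescaled function for its top-order norm.

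First I would pull back via the exponential map at $p$. Since $r<\inj$, in normal coordinates $\Exp_p^{-1}(\b(p,r))=B(0,r)\subset T_p\M$, and Lemma \ref{Fran} (applied on sub-patches if needed, since it is stated for radii $<\inj/3$) gives the equivalence $\|\nabla^k(u\circ\Exp_p)\|_{L_2(B(0,r))}\asymp \|\nabla^k u\|_{L_2(\b(p,r))}$ for $0\le k\le m$. Rescaling by $v(y):=u\bigl(\Exp_p(ry)\bigr)$ on $B(0,1)$, the chain rule and change of variables produce
\begin{equation*}
\|\nabla^k v\|^2_{L_2(B(0,1))}=r^{2k-d}\,\|\nabla^k(u\circ\Exp_p)\|^2_{L_2(B(0,r))}.
\end{equation*}

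Next, I would invoke Corollary \ref{zl_balls}: since $u|_X=0$ and $h\le h_0 r$, one has $\|u\|_{W^k_2(\b(p,r))}\le C h^{m-k}|u|_{W^m_2(\b(p,r))}\le C r^{m-k}\|u\|_{W^m_2(\b(p,r))}$ for $0\le k\le m$. Combining this with the scaling identity collapses every summand to a common scale and yields
\begin{equation*}
\|v\|_{W^m_2(B(0,1))}\le C\, r^{m-d/2}\,\|u\|_{W^m_2(\b(p,r))}.
\end{equation*}
The Sobolev embedding $W^m_2(B(0,1))\hookrightarrow C^\epsilon(B(0,1))$ then gives $|v(0)-v(y)|\le C|y|^\epsilon\,\|v\|_{W^m_2(B(0,1))}$. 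For any $z\in\b(p,r)$, choosing $y=\Exp_p^{-1}(z)/r$ yields $v(0)=u(p)$, $v(y)=u(z)$, and $|y|=\d(p,z)/r$ (radial geodesics in normal coordinates are length-preserving), and substitution produces the claimed bound.

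The main obstacle is not deep: it is purely bookkeeping, verifying that Lemma \ref{Fran} applies at the full radius $r$ (possibly via a trivial covering argument when $r$ is close to $\inj$) and confirming that every constant depends only on $m$, $\epsilon$, and $\M$. The conceptually important point is that \emph{the zeros hypothesis is essential}: without it, the rescaled low-order $W^k_2$ contributions would scale as $r^{-d/2}$, producing the much weaker exponent $-\epsilon-d/2$ in place of $m-\epsilon-d/2$.
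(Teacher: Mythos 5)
Your proposal is correct and follows essentially the same route as the paper's proof: pull back by $\Exp_p$, rescale to a ball of fixed radius (you use $B(0,1)$, the paper uses $B(0,\inj)$, which is only a cosmetic difference since $\inj$ is a fixed constant of $\M$), apply the Euclidean Sobolev embedding $W^m_2\hookrightarrow C^\epsilon$ there, then undo the rescaling and use Corollary~\ref{zl_balls} to trade the low-order Sobolev norms for the top-order one, which is exactly how the $r^{m-\epsilon-d/2}$ power arises. You are also right that Lemma~\ref{Fran} is stated only for radii $<\inj/3$ and so a covering argument is tacitly needed when $r$ approaches $\inj$; the paper glosses over this point as well.
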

\begin{proof}
This follows because  the Sobolev embedding theorem, in conjunction with Lemma \ref{Fran} ensures that $\tilde{w} = w\circ \Exp_p\in C^{\epsilon}(B(0,\inj))$. Thus for $z = \Exp_p(x)\in \b(p,\inj)$,
$$\frac{|w(p)-w(z)|}{\d(p,z)^{\epsilon}} = \frac{|\tilde{w}(0)-\tilde{w}(x)|}{|x|^{\epsilon}}\le
|\tilde{w}|_{C^{\epsilon}(B(0,\inj))}\le C\|\tilde{w}\|_{W_2^m(B(0,\inj))}.$$
For a general $r<\inj$, set $\tilde{w}(\frac{\inj}{r} x) = \tilde{u}(x)$. 
Then 
$$\frac{|u(p)-u(z)|}{\d(p,z)^{\epsilon}}
\le \left(\frac{\inj}{r}\right)^{\epsilon}C\|\tilde{w}\|_{W_2^m(B(0,\inj))}
=\left(\frac{\inj}{r}\right)^{\epsilon}C\left(\sum_{k\le m}\left(\frac{r}{\inj}\right)^{2k} |\tilde{u}|_{W_2^k(B(0,r))}^2\right)^{1/2}.
$$
The result follows by applying Lemma \ref{Fran} in conjunction with Corollary \ref{zl_balls}.
\end{proof}
\noindent A similar argument to the proof of Corollary \ref{zl_balls} given above yields these results for annuli and complements of balls. In the  following two lemmas, we are concerned with the case $p=2$. Consequently, we suppress dependence on these parameters by expressing the constant from the zeros lemma for such domains simply as $\Lambda$.
In other words,
\begin{equation}\label{zeros_const}
\Lambda := \max_{k=0\dots m-1}C_{m,k,2,\M}\left(8\Gamma_2+4\right)^{(3/2)d+2m},
\end{equation}
which depends only on $m$ and $\M$.
%
%
\begin{corollary}[{\bf Zeros lemma on annuli}] \label{zl_annulus}
  Assume $m>d/2$, Let $\a  = \b(p,r) \setminus \b(p,r-t)$, where $0<t<r<\inj$, and let $h_0$ be given by (\ref{common_h_0}). 
  If $u\in W_p^m(\a)$ vanishes on $X\subset \a$, where $h\le h_0\min(t/2,\inj/3)$, we have
 \[
\|u\|_{W_2^{k}(\a)}\le \Lambda h^{m-k} |u |_{W_p^m(\a)}.
\]
\end{corollary}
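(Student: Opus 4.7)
The plan is to reduce this corollary to Theorem \ref{omega_manf_continuous_est} exactly as was done for balls in Corollary \ref{zl_balls}; what remains is to verify that the annulus $\a$ satisfies a uniform cone condition with aperture $\varphi=\arcsin(1/(2\Gamma_2))$ and cone radius $R_\Omega$ of order $\min(t,\inj)$, which are precisely the parameters that produce the $h_0$ of (\ref{common_h_0}) and the constant $\Lambda$ of (\ref{zeros_const}). The only feature absent in the ball case is that $\partial\a$ has two components, so I must exhibit an inscribed tangent ball both at points of the outer sphere $\partial\b(p,r)$ and at points of the inner sphere $\partial\b(p,r-t)$.

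Set $\rho:=\min(t/2,\inj/3)$. At a point $q\in\partial\b(p,r)$, the plan is to choose $p_{\mathrm{in}}$ on the (unique, since $r<\inj$) minimizing geodesic from $q$ to $p$ at geodesic distance $\rho$ from $q$. Then $d(p_{\mathrm{in}},p)=r-\rho$, and the triangle inequality gives, for every $x\in\b(p_{\mathrm{in}},\rho)$, the two-sided bound $r-2\rho<d(x,p)<r$; since $2\rho\le t$, this means $\b(p_{\mathrm{in}},\rho)\subset\a$ and $d(q,p_{\mathrm{in}})=\rho$. Symmetrically, at $q\in\partial\b(p,r-t)$, I would extend the geodesic from $p$ through $q$ outward to a point $p_{\mathrm{out}}$ at distance $\rho$ beyond $q$; the same triangle-inequality argument produces a ball $\b(p_{\mathrm{out}},\rho)\subset\a$ realizing the distance $d(q,p_{\mathrm{out}})=\rho$. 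Because $\rho<\inj/3$, Corollary \ref{ball_boundary} applies in both cases and supplies a geodesic cone in $\overline{\a}$ at $q$ of radius $\Gamma_1\rho/2$ and aperture $\varphi$.

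With the uniform cone condition verified with $R_\Omega=\Gamma_1\rho/2$ and the prescribed $\varphi$, Theorem \ref{omega_manf_continuous_est} applies: the stated hypothesis $h\le h_0\min(t/2,\inj/3)$ matches the theorem's requirement $h\le h_0 R_\Omega$ after absorbing the harmless factor $\Gamma_1/2$ into $h_0$, in the same way this is done for balls and ball complements throughout Section \ref{lagrange_function}. Estimate (\ref{p2p_omega}) with $\sfp=2$ then yields $\|u\|_{W_2^k(\a)}\le C_{m,k,2,\M}F(\varphi)^{-(3/2)d-2m}h^{m-k}\|u\|_{W_2^m(\a)}$, and since $F(\varphi)=1/(8\Gamma_2+4)$ the right-hand constant collapses exactly to the $\Lambda$ defined in (\ref{zeros_const}). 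I do not anticipate any real obstacle; the only mild subtlety is the requirement $\rho\le\inj/3$ needed to invoke Corollary \ref{ball_boundary}, and this is precisely what forces the $\inj/3$ appearing in the hypothesis on $h$.
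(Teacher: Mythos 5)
Your proposal is correct and follows essentially the same route as the paper: the paper's proof simply asserts that at each boundary point of $\a$ an open ball of radius $t/2$ can be inscribed tangent to the boundary, and then invokes Corollary~\ref{ball_boundary} together with Theorem~\ref{omega_manf_continuous_est}, exactly your reduction. Your extra details (treating the inner and outer spheres separately via the triangle inequality, taking $\rho=\min(t/2,\inj/3)$ so that Corollary~\ref{ball_boundary} applies, and absorbing the factor $\Gamma_1/2$ relating the cone radius to $\rho$) just make explicit what the paper leaves implicit.
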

\begin{proof}
At each point $q$ of the boundary of $\a$ an open ball of radius $t/2$ can be placed inside $\a$
with a boundary that passes through $q$. The result follows from Corollary \ref{ball_boundary} and Theorem \ref{omega_manf_continuous_est}.
\end{proof}
\begin{corollary}[{\bf Zeros lemma on complements of balls}] \label{zl_com}
If $r<\inj/3$  and 
if $u\in W_2^m(\M)$ vanishes on $X$ with $h = h(X,\b(p,r)^{c} ) \le h_0\inj/3$, then 
$$\|u\|_{W_2^{m-k}(\b(p,r)^{c})}\le\Lambda  h^{m-k} 
|u |_{W_2^m(\b(p,r)^{c})}.$$
\end{corollary}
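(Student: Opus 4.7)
The plan is to verify that $\b(p,r)^c$ is a bounded Lipschitz domain satisfying a uniform cone condition with parameters independent of $p$ and $r$, and then cite Theorem~\ref{omega_manf_continuous_est} directly. Since $\M$ is compact, $\b(p,r)^c$ is bounded, and its boundary $\partial\b(p,r)$ is a smooth (hence Lipschitz) geodesic sphere because $r<\inj$. So the only real work is to exhibit a uniform cone at each boundary point, which I will do via the ball construction of Corollary~\ref{ball_boundary}.

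The key geometric step: given $q\in\partial\b(p,r)$, let $\gamma_q$ be the unique minimizing geodesic from $p$ to $q$, and extend it past $q$ by a distance $\inj/3$ to a point $p_0:=\gamma_q(r+\inj/3)$. Because $r<\inj/3$, we have $r+\inj/3<2\inj/3<\inj$, so $\gamma_q$ remains minimizing on $[0,r+\inj/3]$; hence $\d(p,p_0)=r+\inj/3$ and $\d(q,p_0)=\inj/3$. For any $x\in\b(p_0,\inj/3)$, the triangle inequality gives $\d(x,p)\ge\d(p,p_0)-\d(x,p_0)>r$, so $\b(p_0,\inj/3)\subset \b(p,r)^c$. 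Corollary~\ref{ball_boundary} then produces, at each $q\in\partial\b(p,r)$, a geodesic cone $C_q\subset\b(p,r)^c$ of radius $\Gamma_1\inj/6$ and aperture $2\varphi$ with $\varphi=\arcsin\bigl(1/(2\Gamma_2)\bigr)$. These parameters depend only on $\M$ and are independent of $p$ and $r$, establishing the uniform cone condition with $R_\Omega$ comparable to $\inj$ and $F(\varphi)=1/(8\Gamma_2+4)$.

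With the uniform cone condition in hand, Theorem~\ref{omega_manf_continuous_est} applied to $\Omega=\b(p,r)^c$ with $\sfp=2$ yields, for $u\in W_2^m(\b(p,r)^c)$ vanishing on $X$ with $h=h(X,\b(p,r)^c)\le h_0\inj/3$ (the threshold $h_0$ from (\ref{common_h_0}) is precisely the one that absorbs the factors $\Gamma_1/6$ and $F(\varphi)^2$ from the generic bound (\ref{h_cond_omega})),
\[
\|u\|_{W_2^{m-k}(\b(p,r)^c)}\le C_{m,k,2,\M}\,F(\varphi)^{-(3/2)d-2m}\,h^{m-k}\,\|u\|_{W_2^m(\b(p,r)^c)}.
\]
Since $F(\varphi)^{-1}=8\Gamma_2+4$ depends only on $\M$, the prefactor is bounded above by $\Lambda$ as defined in (\ref{zeros_const}), giving the stated estimate.

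I expect no genuine obstacle here; the only subtlety is keeping track of the constants so that the threshold on $h$ matches $h_0\inj/3$ rather than some smaller quantity like $h_0\Gamma_1\inj/6$. This is purely a matter of absorbing $\Gamma_1/6$ (and the various $\Gamma_i$-dependent factors) into the definition (\ref{common_h_0}) of $h_0$ and into the constant $\Lambda$, exactly as was done for Corollary~\ref{zl_annulus}. The geometric content — that the ball complement admits a uniform interior cone with manifold-only parameters — is the only real content, and the extension-of-geodesic argument above handles it.
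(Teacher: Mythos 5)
Your proof is correct and follows essentially the same route as the paper's own (very terse) argument: place a geodesic ball in $\b(p,r)^c$ tangent to the boundary at $q$, invoke Corollary~\ref{ball_boundary} to obtain a uniform interior cone with parameters depending only on $\M$, and then apply Theorem~\ref{omega_manf_continuous_est}. The only cosmetic difference is that you place a ball of radius $\inj/3$ at distance $r+\inj/3$ from $p$, whereas the paper places one of radius $2\inj/3$ at distance $r+2\inj/3$ (and is looser with the strict hypothesis $\rho<\inj/3$ in Corollary~\ref{ball_boundary} and with the $\Gamma_1$ factor in the resulting cone radius -- issues which, as you correctly observe, get absorbed into the definitions of $h_0$ and $\Lambda$).
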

\begin{proof}
By placing its center, $q$, a distance of  
$r+2 \inj/3$ 
away from $p$, the ball $\b(q,2\inj/3)$ can be placed in  $\b(p,r)^{c}$. It follows that 
the set satisfies a cone condition with radius $R = \inj/3$.
\end{proof}

\end{appendix}


\bibliographystyle{siam}
\bibliography{HNW-Polyharmonic}

\end{document}